\documentclass[11pt]{article}

\usepackage[margin=1in]{geometry}
\usepackage{amsmath,amsthm,amssymb,amsfonts,float}
\usepackage{listings}
\usepackage{pdfpages}
\usepackage{mathtools,slashed}
\usepackage{tikz}
\usetikzlibrary{patterns}


\makeatletter
\newcommand{\subjclass}[2][2010]{%
  \let\@oldtitle\@title%
  \gdef\@title{\@oldtitle\footnotetext{#1 \emph{Mathematics subject classification.} #2}}%
}
\makeatother

\usepackage[colorlinks=true, pdfstartview=FitV, linkcolor=blue,citecolor=blue, urlcolor=blue]{hyperref}

\usepackage[abbrev,lite,nobysame]{amsrefs}
\usepackage{color}

\usepackage{esint}


\usepackage{mathtools,enumitem,mathrsfs}

\usepackage[compact]{titlesec}


\usepackage{comment}

    



\newcommand{\eps}{\epsilon}

\newcommand{\na}{\nabla}
\newcommand{\grad}{\nabla}

\newcommand{\norm}[1]{\left\|  #1 \right\|}
\newcommand{\abs}[1]{\left| #1 \right|}

\newcommand{\brak}[1]{\left\langle #1 \right\rangle} 


\newcommand{\rr}{\mathbb{R}}


\newcommand{\dee}{\mathrm{d}}

\newcommand{\dy}{\dee y}

\newcommand{\red}[1]{\textcolor{red}{#1}}


\usepackage{bbm}

\newcommand{\p}{\partial}

\newtheorem{theorem}{Theorem}[section]
\newtheorem{proposition}[theorem]{Proposition}

\newtheorem{lemma}[theorem]{Lemma}
\newtheorem*{lemma*}{Lemma}

\theoremstyle{definition}

\newtheorem{remark}[theorem]{Remark}

\def\abs#1{\left|#1\right|}
\newcommand{\n}{\ensuremath{\nonumber}}
\newcommand{\pa}{\ensuremath{\partial}}

\newcommand{\siming}[1]{{#1}} 

\newcommand{\nq}{{\neq}}

\newcommand{\myr}[1]{{ #1 }}

\newcommand{\al}{\alpha}

\newcommand{\de}{\Delta}
\newcommand{\lan}{\langle}
\newcommand{\ran}{\rangle}
\newcommand{\lf}{\left}
\newcommand{\rg}{\right}

\newcommand{\bb}{ \mathbb }

\newcommand{\dnt}{ {\delta_\ast\nu^{1/3}t}  }
\newcommand{\mh}[1]{}

\renewcommand{\Re}{\textup{Re}}


\allowdisplaybreaks


\setcounter{secnumdepth}{3}
\numberwithin{equation}{section}


\begin{document}

\title{Stability threshold of nearly-Couette shear flows \\ with Navier boundary conditions in 2D} 
\author{ Jacob Bedrossian\thanks{\footnotesize Department of Mathematics, University of California, Los Angeles, CA 90095, USA \href{mailto:jacob@math.ucla.edu}{\texttt{jacob@math.ucla.edu}}} \and Siming He\thanks{Department of Mathematics, University of South Carolina, Columbia, SC 29208, USA \href{mailto:siming@mailbox.sc.edu}{\texttt{siming@mailbox.sc.edu}}} \and Sameer Iyer\thanks{Department of Mathematics, University of California, Davis, Davis, CA 95616, USA \href{mailto:sameer@math.ucdavis.edu}{\texttt{sameer@math.ucdavis.edu}}} \and Fei Wang\thanks{School of Mathematical Sciences, CMA-Shanghai, Shanghai Jiao Tong University, 
		 Shanghai, China \href{mailto:fwang256@sjtu.edu.cn}{\texttt{fwang256@sjtu.edu.cn}}}}

\maketitle

\begin{abstract}
In this work, we prove a threshold theorem for the 2D Navier-Stokes equations posed on the periodic channel, $\mathbb{T} \times [-1,1]$, supplemented with Navier boundary conditions $\omega|_{y = \pm 1} = 0$. Initial datum is taken to be a perturbation of Couette in the following sense: the shear component of the perturbation is assumed small (in an appropriate Sobolev space) but importantly is independent of $\nu$. On the other hand, the nonzero modes are assumed size $O(\nu^{\frac12})$ in an anisotropic Sobolev space. For such datum, we prove nonlinear enhanced dissipation and inviscid damping for the resulting solution. The principal innovation is to capture quantitatively the \textit{inviscid damping}, for which we introduce a new Singular Integral Operator which is a physical space analogue of the usual Fourier multipliers which are used to prove damping. We then include this SIO in the context of a nonlinear hypocoercivity framework.  
\end{abstract}

\setcounter{tocdepth}{1}
{\small\tableofcontents}

\section{Introduction} \label{sec:intro}

We study the 2D Navier-Stokes equations in the periodic channel $(x,y) \in \mathbb T \times [-1,1]\siming{= [-\pi,\pi]/{\sim}\times [-1,1]}$ with inhomogeneous Navier boundary conditions
\begin{subequations} \label{eq:FullNSIntro}
\begin{align} 
  & \partial_t v + (v \cdot \grad)v + \grad p = \nu \Delta v, \\
  & \grad \cdot v = 0,\quad  v_2(t,x,\pm 1) = 0, \quad
   \partial_y v_1(t,x,\pm 1) = 1,\\ 
  &v(t=0,x,y)=v_{in}(x,y). 
\end{align}
\end{subequations}
This problem corresponds to studying a fluid with Navier-type boundary conditions and applying a fixed force (rather than a fixed velocity) which slides the top and bottom boundaries in opposite directions.
See for example \cite{MasmoudiStRaymond03} for derivations of Navier-type boundary conditions from kinetic theory.
The boundary conditions on the vorticity $\Omega = \partial_y u_1 - \partial_x u_2$ reduce to simple Dirichlet conditions, giving the system
\begin{align*}
  & \partial_t \Omega + v \cdot \grad \Omega = \nu \Delta \Omega, \\
  & v = \grad^\perp {\Delta}^{-1} \Omega {=(\pa_y\de^{-1}\Omega,-\pa_x \de^{-1}\Omega)},\quad \Omega(t,x,y=\pm 1) = 1, \\
  & \Omega(t=0,x,y)=\Omega_{in}(x,y),
\end{align*}
where the $\Delta^{-1}$ is taken with homogeneous Dirichlet conditions on $y = \pm 1$ so that $v$ satisfies the no-penetration condition $v_2(t,x,\pm 1) = 0$.
It is straightforward to prove that all solutions of \eqref{eq:FullNSIntro} converge to the unique steady state selected by the boundary conditions, namely, the Couette flow:
\begin{align*}
v_0 = \begin{pmatrix} y \\ 0 \end{pmatrix}. 
\end{align*}
A class of questions which has received a lot of attention recently is that of a \emph{quantitative stability threshold}: given a norm on the initial data $\norm{\cdot}_X$, what is the largest $\gamma \geq 0$ such that
\begin{align*}
\norm{v(0) - v_0}_{X} \ll \nu^\gamma 
\end{align*}
implies that $v(t)$ behaves roughly like the linearized problem \emph{for all time}, where the exact quantification of this varies from work-to-work, but generally involves at least observing the enhanced dissipation characteristic of the linearized problem (discussed further below).
See \cite{BGM_Bull19} for a detailed discussion on thresholds in the context of the Couette flow. See e.g. \cite{Gallay18,CZEW20,CLWZ20,BVW16,MW20,MW19} and the references therein for 2D, and see 3D \cite{WZ21,BGM15I,BGM15II,BGM15III} and the references therein for 3D. 
When studying these kinds of high Reynolds number hydrodynamic stability problems, the mixing induced by the shearing greatly influences the stability of the equilibrium,
leading to two notable effects in 2D: \emph{inviscid damping} wherein the perturbation velocity decays in the linearized (or nonlinear) Euler equations and \emph{enhanced dissipation}, wherein the mixing accelerates the viscous dissipation, leading to, for example rapid decay of the $x$-dependence on time-scales like $\approx \nu^{-1/3}$ (rather than the decay of the heat equation $\approx \nu^{-1}$). Many works have studied these effects recently in the linearized Navier-Stokes and Euler equations; see for example \cite{Zillinger16,BCZV17,Jia2020,WZZ19,WZZ20,WZZ18,LinXu2017,ISJ22,J20,RWWZ23} and the references therein for inviscid damping results and e.g. \cite{Gallay18,J23,CZEW20,BH20,WZZ20,CWZ23} and the references therein for results on enhanced dissipation (some results study both, such as \cite{J23,CWZ23}, but such results are rare outside of the Couette flow). There are also works on mixing and enhanced dissipation by laminar flows in passive scalars, for example such as \cite{GCZ21,ABN22,BCZ15,BW13} and the references therein.  

The purpose of this paper is three-fold. The first two are:
(1) to introduce a new energy method for studying inviscid damping and enhanced dissipation; and (2) use this method to obtain a stability threshold not just for the Couette flow, but for all shear flows close to Couette, namely the `slowly-varying' solutions
\begin{align}
v(t) = \begin{pmatrix} e^{t \nu \Delta} v_0 \\ 0 \end{pmatrix}, \label{eq:vtslow}
\end{align}
with $\norm{v_0 - y}_{H^4} \leq \delta_0$ for a universal constant $\delta_0$ \emph{independent} of $\nu$.
In \cite{BVW16}, it was estimated that $\gamma \leq 1/2$ without boundaries (i.e. $y \in \mathbb R$) with initial data in Sobolev spaces $X = H^s$ with $s > 1$. 

For the case of the Couette flow (rather than \eqref{eq:vtslow}) more work exists.
In the case of $\mathbb T \times \mathbb R$, \cite{MW20} improves the required regularity for $\gamma \leq 1/2$ to $H^{log}_xL^2_y$.  And it turns out to be optimal for low regularity even 
up to $H^1_xL^2_y$~\cite{LiMasmoudiZhao22b}.
For $f_0 = y$ without boundaries, this was later improved to $\gamma \leq 1/3$ for sufficiently high Sobolev regularity \cite{MW19} and $\gamma\in[0,1/3]$ for suitable Gevrey classes,  which is expected to be sharp, due to the results of \cite{DM18}.
The regularity could be further relaxed to $H^s$ for $s\ge3$ in a upcoming paper \cite{WeiZhang23}. 
When one has at least Gevrey-2 regularity on the other hand, the result was proved with $\gamma = 0$ in \cite{BMV14} (i.e. the results are uniform-in-$\nu$). 
For $f_0 = y$ and Dirichlet boundary conditions in the channel, the stability threshold was proved with $\gamma = 1/2$ in \cite{CLWZ20}.
The method of that paper should also prove the corresponding result for the Navier boundary condition case, which is easier than the Dirichlet case.

In our upcoming work \cite{BHIW23I}, we extend the Gevrey-2 uniform-in-$\nu$ result of \cite{BMV14} to the channel in the case with Navier boundary conditions.
The proof works essentially in two steps: (1) to obtain precise Gevrey regularity estimates on time-scales $1 \ll t \ll \nu^{-{\zeta}}$ for some ${\zeta} > 1/3$ (which contain all of the results of e.g. \cite{HI20,BM13} and more) and then (2) apply the results of Theorem \ref{thm:main} below for times $t \gtrsim \nu^{-\zeta}$ to obtain a global-in-time result.
The third purpose of this paper is hence to complete the proof of the theorem stated in \cite{BHIW23I} by solving step (2). 
We have made this step a separate paper as the methods herein are largely different from \cite{BHIW23I}, and moreover, the result and the new energy method we introduce are of independent interest.  

Let us now make the results more precise. 
We will consider initial conditions of the form
\begin{align*}
\Omega_{in}(x,y) = 1 + W_{in}(y) + \omega_{in}(x,y), 
\end{align*}
where we will be assuming more regularity on $W_{in}$ but no $\nu$-dependent smallness, whereas $\omega_{in}$ will be assumed small relative to $\nu$.  \begin{subequations}\label{eq:omega}
We define the heat extension (with homogeneous Dirichlet conditions) of $W_{in}$ as
\begin{align}
W(t,y) = \lf(e^{\nu t \Delta_y} W_{in}\rg)(y),
\end{align}
and $U$ be the resulting shear flow given by the corresponding Biot-Savart law (here $G$ denotes the Greens function for $\partial_{yy}$ with homogeneous Dirichlet conditions on interval $[-1,1]$) 
\begin{align}
U(t,y) ={ y+}{ \partial_y \int_{-1}^1 G(y,y') W(t,y') \dee y'. }
\end{align}
We write the solution of the Navier-Stokes equations as
\begin{align*}
\Omega ={1+} W(t,y) + \omega(t,x,y), 
\end{align*}
reducing the problem to 
\begin{align} 
  & \partial_t \omega + U \partial_x \omega {- U'' }\partial_x \phi  + u \cdot \grad \omega = \nu \Delta \omega,  \\
  & u = \grad^\perp \Delta^{-1} \omega, \ \
    \omega(t,y=\pm 1) = 0, \ \ 
   \omega(0) = \omega_{in}, 
\end{align}
\end{subequations}
which now contains time-dependent linear terms which are not perturbatively small in $\nu$ and do not satisfy any straightforward energy estimates because of the non-local term $U''\partial_x\phi$ (note that $U''=\pa_{yy}U$ is not sign-definite).
Nevertheless, we obtain a new energy method capable of dealing with the time-dependent linearized Navier-Stokes equations for Navier boundary conditions and extracting both inviscid damping and enhanced dissipation at the same time (the same method works, and is simpler, in $\mathbb T \times \mathbb R$).
The approach is nearly a physical-side analogue of the Fourier multiplier energy method employed in \cite{BMV14} to solve the corresponding problem without a boundary. 
Such a Fourier multiplier approach cannot be employed here due to the boundary, however, one can find a singular integral operator that is essentially a physical-side analogue of the multiplier in \cite{BMV14} used therein to  obtain the fundamental $L^2_t H^{1}_x L^2_y$ inviscid damping estimate on the velocity field and which allows to integrate the $U''\partial_x\phi$ linear term.
Using this singular integral operator, combined with a more standard hypocoercivity method to obtain the enhanced dissipation and suitable nonlinear estimates, we obtain the following theorem purely via an energy method. 
\begin{theorem} \label{thm:main}
Consider the equation \eqref{eq:omega} subject to compatibility condition $W_{in}\big|_{y=\pm 1}=\omega_{in}\big|_{y=\pm 1}\equiv0.$ Then for all $m \in (2/3,1)$, $\exists \delta_0,\delta_1 > 0$ such that if
\begin{align}\label{asmp}
&{\norm{W_{in}}_{ H_y^4} }  \leq \delta_0, \quad \
 \sum_{0 \leq j \leq 1} \norm{(\nu^{1/3} \partial_y)^j \brak{\partial_x}^{m - \frac{j}{3}} \omega_{in}}_{L^2}  =: \eps \leq \delta_1\sqrt{\nu}, 
\end{align}
then 
all $\delta_\ast > 0$ sufficiently small (depending only on universal constants) and all $\nu \in (0,1)$, there hold
\begin{align*}
\sum_{0 \leq j \leq 1} &\norm{ \brak{\partial_x}^{m - \frac{j}{3}} (\nu^{1/3}\partial_y)^j\omega_{\neq}(t)}_{L_{x,y}^2} \lesssim\mh{_\delta} \epsilon e^{-\delta_\ast \nu^{1/3} t},\quad \forall t\in[0,\infty); \\
\sum_{0 \leq j \leq 1}  &\norm{ (\nu^{1/3} \partial_y)^j \omega_0(t)}_{L_y^2} \lesssim\mh{_\delta} \eps e^{-\delta_\ast \nu t},\quad \forall t\in[0,\infty); \\
\sum_{0\leq j \leq 1}  &\norm{e^{\delta_\ast \nu^{1/3} t}\abs{\partial_x}^{m+1\mh{-\delta} - \frac{j}{3}} (\nu^{1/3} \partial_y)^j u_{\neq}}_{L^2_t L^2_{x,y}} \lesssim\mh{_\delta} \eps. 
\end{align*}
\end{theorem}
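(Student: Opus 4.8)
The plan is to reduce to two pieces — the $x$-average $\omega_0$ and the nonzero modes $\omega_{\neq}$ — and to run a continuity argument for a weighted energy of $\omega_{\neq}$ in which a new singular integral operator $M$ plays the role that the inviscid-damping Fourier multiplier of \cite{BMV14} plays on the whole line. For the zero mode the only surviving interaction is $\partial_t\omega_0-\nu\partial_{yy}\omega_0=-\partial_y\overline{u_{2,\neq}\omega_{\neq}}$, so a direct energy estimate plus the Poincar\'e inequality on $[-1,1]$ gives $\tfrac{d}{dt}\norm{\omega_0}_{L^2}^2+c\nu\norm{\partial_y\omega_0}_{L^2}^2\lesssim\nu^{-1}\norm{\overline{u_{2,\neq}\omega_{\neq}}}_{L^2}^2$. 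Since $\partial_y u_{2,\neq}=-\partial_x u_{1,\neq}$, \emph{both} components of $u_{\neq}$ obey the $L^2_t$ inviscid-damping bound (with a gained $x$-derivative), so the right-hand side is integrable in time at size $\epsilon^2$ \emph{without} any loss in $\nu$, and the bound closes precisely at $\epsilon\lesssim\sqrt\nu$; the exponential weight $e^{2\delta_\ast\nu t}$ is accommodated because the spectral gap of $\nu\partial_{yy}$ with Dirichlet data on $[-1,1]$ is $\gtrsim\nu\gg\delta_\ast\nu$, and the $\nu^{1/3}\partial_y\omega_0$ estimate is identical.

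For $\omega_{\neq}$, I would work with a master functional of the shape
\[
\mathcal{E}(t)=\sum_{0\le j\le1}\bigl\|M\,\brak{\partial_x}^{m-\tfrac j3}(\nu^{1/3}\partial_y)^j\omega_{\neq}(t)\bigr\|_{L^2}^2+(\text{enhanced-dissipation corrections}),
\]
where $M=M(t)$ is a Fourier-in-$x$, physical-in-$y$, time-dependent operator, constructed as the physical-side analogue of the $L^2_tH^1_x$ inviscid-damping multiplier of \cite{BMV14} so that its symbol is comparable to $1$ uniformly in $t$, the $x$-frequency and $\nu$, while $\partial_tM$ carries a good sign; the corrections are the standard hypocoercive package (a $\nu^{2/3}\partial_y$-weighted term and a small, tuned cross-term coupling $\partial_x\omega_{\neq}$ and $\partial_y\omega_{\neq}$), responsible for the $e^{-\delta_\ast\nu^{1/3}t}$ rate on $\omega_{\neq}$ (and, through the $\omega_0$ forcing, the $e^{-\delta_\ast\nu t}$ rate on $\omega_0$). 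Differentiating $\mathcal{E}$: the viscous term contributes the favorable dissipation plus lower-order $[M,\nu\Delta]$ commutators; the crucial gain comes from $\partial_tM$ together with the transport $-U\partial_x\omega$, which — since $U=y+\partial_y\!\int GW$ is within $O(\delta_0)$ of $y$ — is designed to produce a coercive term $\gtrsim\sum_j\bigl\|\abs{\partial_x}^{m+1-\tfrac j3}(\nu^{1/3}\partial_y)^ju_{\neq}\bigr\|_{L^2}^2$, whose time integral is exactly the claimed inviscid-damping estimate (the $e^{\delta_\ast\nu^{1/3}t}$ weight in that estimate is then recovered by inserting the exponential decay of $\mathcal{E}$); and, most importantly, $M$ is engineered so that this same mechanism renders the nonlocal, non-sign-definite term $U''\partial_x\phi$ \emph{integrable in the energy} — after commuting $M$ past multiplication by $U''$, which costs only $\norm{[M,U''\,\cdot\,]}_{L^2\to L^2}\lesssim\norm{U''}_{H^2}\lesssim\delta_0$ by a Calder\'on-commutator estimate, and after using the in-time decay of $u_{\neq}$. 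The nonlinear terms $(u_0\cdot\nabla)\omega_{\neq}$, $u_{2,\neq}\partial_y\omega_0$ and $(u_{\neq}\cdot\nabla\omega_{\neq})_{\neq}$ are controlled from the decay rates and $\epsilon\le\delta_1\sqrt\nu$, with the worst balance (the reaction term $u_{2,\neq}\partial_y\omega_0$ and the zero-mode feedback) again sitting exactly at the $\sqrt\nu$ threshold thanks to the $L^2_t$ control of the full velocity gradient. A standard continuity/bootstrap argument upgrades the a priori estimates to $t\in[0,\infty)$.

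The main obstacle — and the genuinely new ingredient — is constructing $M$ on the bounded interval $[-1,1]$ and carrying out its harmonic analysis. On $y\in\mathbb{R}$ one would use a Fourier multiplier in $y$; with a boundary one cannot, so $M$ must be built directly in physical space, compatibly with the Dirichlet structure of $\omega_{\neq}$, and one must prove: (i) $L^2$-boundedness and invertibility of $M$ with constants uniform in $t$, the $x$-frequency and $\nu$; (ii) the coercivity/sign of the commutator of $M$ with the full linear operator $-U\partial_x+U''\partial_x\Delta^{-1}$, which is simultaneously what yields the $L^2_t$ velocity gain and what makes $U''\partial_x\phi$ absorbable — this is where the closeness to Couette ($\delta_0$ small) must be used in an essential way, so that the near-Couette corrections produce no growing contribution to $\mathcal{E}$; and (iii) the commutator bounds of $M$ with multiplication by $U,U',U''$ and with $\nu\Delta$. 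The other recurring technicality is controlling all boundary terms arising in the many integrations by parts: those involving $\omega_{\neq}|_{y=\pm1}=0$ vanish, but those involving $\partial_y\omega_{\neq}$ or $M\omega_{\neq}$ at the wall require care, and I expect this to dictate the precise choice of the anisotropic $(\nu^{1/3}\partial_y)$-weights.
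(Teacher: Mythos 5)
Your high-level plan---reduce to the zero and nonzero modes, run a weighted hypocoercive energy for $\omega_{\neq}$ in which a physical-space singular integral operator substitutes for the BMV14 multiplier, and close a bootstrap at the $\epsilon\lesssim\sqrt\nu$ threshold---matches the paper's philosophy, and your zero-mode outline is essentially Lemma~\ref{lem:NLzero}. But the way you propose to build the SIO piece of the energy is a genuinely different route, and the specific structural choice you make is precisely the one the paper deliberately avoids. You posit a \emph{time-dependent} operator $M(t)$ with $\mathcal{E}=\norm{M\omega_{\neq}}_{L^2}^2$, engineered so that $\partial_t M$ carries a sign; that is a faithful transcription of the BMV14 mechanism. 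The paper instead uses a \emph{static}, $k$-by-$k$ operator $\mathfrak{J}_k$, whose kernel is $|k|\,\mathrm{sign}(k)\,\mathrm{p.v.}\,G_k(y,y')/(2i(y-y'))$---the $t\to\infty$ asymptotic of the unshifted BMV14 symbol---and, crucially, the energy is the quadratic form $\brak{\omega_k,(1+c_\tau\mathfrak{J}_k)\omega_k}$ rather than a squared norm, so no operator square root is ever taken. The inviscid damping then does \emph{not} come from a $\partial_t M$ term (there is none); it comes from the interaction of $\mathfrak{J}_k$ with the transport $-ikU\omega_k$: after symmetrizing via $\mathfrak{J}_k^\ast=\mathfrak{J}_k$ (Lemma~\ref{lem:AS}) and Taylor-expanding $U(y)-U(y')=U'(y)(y-y')+\cdots$, the leading $(y-y')$ factor cancels the kernel singularity and yields the coercive local term $-\tfrac{1}{4}|k|^2\norm{\sqrt{U'}\nabla_k\phi_k}_{L^2}^2$ (this is the term $T_{11}$ in Lemma~\ref{lem:LinTau}). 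The nonlocal $U''\partial_x\phi$ piece is not absorbed into a sign but controlled as a small error of size $\delta_0\,|k|^2\norm{\nabla_k\phi_k}_{L^2}^2$, using $L^2$-boundedness of $\mathfrak{J}_k$ and the commutator $[\partial_y,\mathfrak{J}_k]$ (Lemmas~\ref{lem:BoundI}--\ref{lem:H}), rather than the Calder\'on-commutator bound you invoke.

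The concrete gap in your proposal is the construction and analysis of a time-dependent $M(t)$ on $[-1,1]$ with your three stated properties. The paper's outline of the motivation indicates exactly this obstruction: the unshifted BMV14 multiplier is a time-dependent Fourier multiplier in $\eta$ with no direct analogue on a bounded interval, and there is no obvious physical-space kernel for its time-dependent version whose $\partial_t$ is a coercive, signed $L^2$-bounded operator uniformly in $t,k,\nu$. The paper's workaround---take the long-time static limit, identify the surviving SIO, and replace the squared norm by a non-factored coercive quadratic form---is what removes the obstacle: one then only needs Schur-type kernel bounds, a $\partial_y$-commutator identity, and self-adjointness for the explicit static kernel, all computable in closed form from \eqref{G_k}. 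Nothing you wrote is mathematically false, but the hardest piece is left unbuilt, and the formulation $\norm{M(t)\omega}_{L^2}^2$ with $\partial_t M$ coercive is unlikely to close as stated; the static quadratic form with damping generated by the transport commutator is the step you were missing.
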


\ifx
\begin{remark}The condition \eqref{asmp} guarantees that 
\begin{align}
\|W\|_{L^\infty_tH_y^4}\leq C\delta_0.
\end{align}
This is because the $W$ is the heat extension of $W_{in}$ and the initial data is consistent, i.e., $W_{in}(y=\pm 1)=\omega_{in}(x,y=\pm 1)\equiv 0.$ 
If the initial data is not consistent, there might be a jump in the Sobolev norms. An alternative assumption can be that $W_{in}(y=\pm 1)=0$ and $\int_{\mathbb{T}}\omega_{in}(x,y)\dee x\equiv0.$
\end{remark}
\fi
\begin{remark}
Note that while we require a little more than $2/3$ of a derivative in $x$ to be $O(\sqrt{\nu})$ in $L^2$, we only require $\partial_y\omega$ to be $O(\nu^{1/6})$ in $H^{1/3+}_x L^2_y$.
\end{remark}

\begin{remark}
In the case of Dirichlet boundary conditions, the boundary layers will necessitate a more complicated approach that cannot be done only with energy estimates. Nevertheless, the proofs of \cite{CLWZ20} show that a strong understanding of the Navier case is extremely useful for making progress on the Dirichlet case. The case of Dirichlet boundary conditions will be considered in future work. 
\end{remark}


\section{Outline} \label{sec:outline}


\subsection{Linearized Problem}
One of the primary challenges to Theorem \ref{thm:main} is obtaining sufficiently good estimates on the (time-dependent) linearized problem that results from dropping the nonlinearity in \eqref{eq:omega}.
We believe these could be  obtained using a time-splitting argument similar to that used in \cite{ChenWeiZhang20} along with a suitable variation of the resolvent estimates found in \cite{CLWZ20}. 
While effective and robust, this method is also quite technical.
One of our primary contributions here is to provide a more straightforward method based solely on energy estimates. 

Due to translation invariance, the linearized problem is best studied mode-by-mode in $x$. 
Denoting $\displaystyle\omega_k(t,y) := {\frac{1}{2\pi}} \int_{-\pi}^{\pi} \omega(t,x,y) e^{-ikx} dx$, we have the mode-by-mode linearization given by 
\begin{subequations} \label{NSchannel}
\begin{align}
&  \pa_t \omega_k + U ik \omega_k {- U''ik}\phi_k - \nu \Delta_k \omega_k= 0, \\
  &   \Delta_k \phi_k =  \omega_k, \\
& \phi_k(\pm 1) = 0, \quad\ \omega_k(\pm 1 )= 0, 
\end{align}
\end{subequations}
where $\Delta_k := -\abs{k}^2 + \partial_{yy}$. 
We introduce a hypocoercive energy for \eqref{NSchannel}, which simultaneously captures the inviscid damping and the enhanced dissipation:
for universal constants $\{c_\ast\}_{\ast\in\{\al,\,\beta,\, \tau\}} \subset (0,1)$ to be specified below, 
\begin{align}\n
E_k[\omega_k] :=&  \| \omega_{k} \|_{L^2}^2   + c_{\alpha} \nu^{\frac23} |k|^{-\frac23}  \| \p_y \omega_{k} \|_{L^2}^2 - c_{\beta} \nu^{\frac{1}{3}} |k|^{-\frac43}  \Re \langle ik \omega_{k}, \p_y \omega_{k} \rangle \\ 
&   + c_{\tau} \Re \langle \omega_k, \mathfrak{J}_k[\omega_k] \rangle +c_{\tau} c_\alpha  \nu^{\frac23} \abs{k}^{-\frac23} \Re \langle \p_y \omega_k, \mathfrak{J}_k[\p_y \omega_k] \rangle \n\\
=&\Re\lan \omega_k ,(1+c_\tau\mathfrak{J}_k)[\omega_k]\ran +c_\al\nu^{\frac{2}{3}}|k|^{-\frac{2}{3}}\Re\lan \pa_y\omega_k ,(1+c_\tau\mathfrak{J}_k)[\pa_y \omega_k]\ran - c_{\beta} \nu^{\frac{1}{3}} |k|^{-\frac43}  \Re \langle ik \omega_{k}, \p_y \omega_{k} \rangle . \label{def:Ek}
\end{align}
Here the singular integral operator (SIO) $\mathfrak{J}_k$ is given by
\begin{align}
\mathfrak{J}_k[f](y) := \abs{k}\mh{^{1-\delta}} \text{p.v.} \frac{k}{\abs{k}} \int_{-1}^1 \frac{1}{2i(y-y')} G_k(y, y') f(y') \dee y', 
\end{align}
where we denote $G_k$ the Green's function for $\Delta_k:=-|k|^2+\pa_{yy}$ with homogeneous Dirichlet boundary conditions, which has the explicit form: 
\begin{align} 
G_k(y,y')=-\frac{1}{k\sinh(2k)}\left\{\begin{array}{cc}\sinh(k(1-y'))\sinh(k(1+y)),&\quad y\leq y';\\
\sinh(k(1-y))\sinh(k(1+y')),&\quad y\geq y'.\end{array}\right.\label{G_k}
\end{align}
The first three terms in \eqref{def:Ek} are the by-now standard hypocoercive energy for extracting optimal enhanced dissipation from problems such as passive scalar shear flows (i.e. \eqref{NSchannel} with no $ik U'' \phi_k$ term); see for example \cite{GCZ21,BCZ15}.   
The last two terms in the energy simultaneously deal with the non-local $ikU''\phi_k$ in the energy estimate (which would normally frustrate any straightforward hypocoercivity estimate) and also extracts a $L^2_t \dot{H}^{1-\delta}_x L^2_y$ inviscid damping estimate which is nearly the optimal estimate expected from the 2D Couette flow \cite{CLWZ20}, namely that
\begin{align*}
\abs{k}^2 \norm{\grad_k \phi_k}_{L^2_t L^2_y}^2 \lesssim \norm{\omega_k(0)}_{L^2}^2. 
\end{align*}
In our case, the terms involving $\mathfrak{J}_k$ will provide the following invisicd damping estimates 
\begin{align*}
\sum_{0 \leq j \leq 1} \abs{k}^{2\mh{-\delta}} \norm{(\nu^{1/3} \abs{k}^{-1/3} \partial_y)^j \grad_k \phi_k}_{L^2_t L^2_y}^2  \lesssim E_k[\omega_{k}(0)]. 
\end{align*}
We show below that $\mathfrak{J}_k$ is a bounded operator, mapping $L^2 $ to $L^2$, with a bound depends uniformly in $k$, and so for appropriate choices of the parameters (see Section \ref{sec:FinLin} for the details on how to set the parameters $\{c_\ast\}_{\ast\in\{\al,\beta,\tau\}}$), we have
\begin{align*}
E[\omega_k] \approx \norm{\omega_k}_{L^2}^2 + \frac{\nu^{2/3}}{\abs{k}^{2/3}} \norm{ \partial_y \omega_k}_{L^2}^2,
\end{align*}
with an implicit constant independent of $\nu$ and $k$. 

The first main result of our work is the following linearized energy estimate, proved in Section \ref{sec:Lin}.
\begin{proposition} \label{prop:lin}
For suitable choices of the parameters $\{c_\ast\}_{\ast\in\{\al,\beta,\tau\}}$ \eqref{def:Ek}, and for any $H^1$ solutions to $\eqref{NSchannel}_{k \neq 0}$, the following holds for all sufficiently small $\delta_\ast>0$ that are independent of $\nu$ and $k$,
\begin{align*}
e^{2\delta_\ast \nu^{1/3} \abs{k}^{2/3} t} E_k[\omega_k(t)] + \frac{1}{4}c_\tau \int_0^t e^{2\delta_\ast \nu^{1/3} \abs{k}^{2/3} s} \abs{k}^{2\mh{-\delta}} \norm{\grad_k \phi_k(s)}_{L^2}^2 ds  \leq E_k[\omega_k(0)],  \quad \forall t \geq 0,
\end{align*}
and, more specifically, the following energy estimate holds: 
\begin{align*}
\frac{d}{dt} E_k[\omega_k] \leq   -8\delta_\ast\mathrm{D}_k[\omega_k] - 8\delta_\ast \nu^{1/3} \abs{k}^{2/3} E_k[\omega_k],
\end{align*}
where $\mathrm{D}_k$ is given by
\begin{subequations} \label{def:Dks}
\begin{align}
\mathrm{D}_k & := \mathrm{D}_{k,\gamma} + c_\alpha \mathrm{D}_{k,\alpha} + c_\beta \mathrm{D}_{k,\beta} + c_\tau \mathrm{D}_{k,\tau} + c_\tau c_\alpha \mathrm{D}_{k,\tau\alpha}, \\ 
\mathrm{D}_{k,\gamma} & := \nu \norm{\grad_k \omega_k}_{L^2}^2:=\nu\|ik\omega_k\|_{L^2}^2+\nu\|\pa_{y}\omega_k\|_{L^2}^2, \\
\mathrm{D}_{k,\alpha} & := \nu  \abs{k}^{-2/3} \norm{\grad_k (\nu^{1/3}\partial_y) \omega_k}_{L^2}^2, \\
\mathrm{D}_{k,\beta} & := \nu^{1/3} \abs{k}^{2/3} \norm{\omega_k}_{L^2}^2, \\
\mathrm{D}_{k,\tau} & := \abs{k}^{2\mh{-\delta}} \norm{\grad_k \phi_k}_{L^2}^2, \\
\mathrm{D}_{k,\tau \alpha} & := \nu^{2/3} \abs{k}^{4/3\mh{-\delta}} \norm{\grad_k \partial_y \phi_k}_{L^2}^2. 
\end{align}
\end{subequations}
Here $\na_k :=(ik,\pa_y).$ Furthermore, we define the notation $\mathrm{D}_{\ast}:=\sum_{k\neq 0}\mathrm{D}_{k,\ast},\, \ast\in\{\gamma,\al,\beta,\tau,\tau\al\}.$ 
\end{proposition}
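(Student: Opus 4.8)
\medskip
\noindent\textbf{Proof plan.} The plan is to differentiate the energy $E_k$ of \eqref{def:Ek} along a solution of $\eqref{NSchannel}_{k\neq0}$ and to show that, choosing the universal constants in the order $c_\alpha$ small, then $c_\beta$ small with $c_\beta^2\ll c_\alpha$, then $c_\tau$ small, and finally $\delta_0$ small relative to all of these, there is a fixed $C_0>0$ with $\tfrac{d}{dt}E_k[\omega_k]\le-C_0\,\mathrm{D}_k[\omega_k]$. Granting this, the differential inequality in the statement is immediate for all sufficiently small $\delta_\ast$, since the explicit forms give $E_k\approx\|\omega_k\|_{L^2}^2+\nu^{2/3}|k|^{-2/3}\|\partial_y\omega_k\|_{L^2}^2$ and $\nu^{1/3}|k|^{2/3}E_k\lesssim\mathrm{D}_{k,\beta}+\mathrm{D}_{k,\gamma}\lesssim\mathrm{D}_k$, so that $-C_0\mathrm{D}_k\le-8\delta_\ast\mathrm{D}_k-8\delta_\ast\nu^{1/3}|k|^{2/3}E_k$; the integrated bound then follows by multiplying by $e^{2\delta_\ast\nu^{1/3}|k|^{2/3}t}$ and using $\mathrm{D}_k\ge c_\tau\mathrm{D}_{k,\tau}$. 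A preliminary observation neutralizes every boundary term: since $\omega_k|_{y=\pm1}=\phi_k|_{y=\pm1}=0$ at each time, evaluating $\eqref{NSchannel}$ on $y=\pm1$ gives $\nu\Delta_k\omega_k|_{y=\pm1}=0$, hence $\partial_{yy}\omega_k|_{y=\pm1}=0$ (and likewise $\partial_{yy}\phi_k|_{y=\pm1}=0$) — this is the precise point where the Navier condition is easier than Dirichlet, as it permits free integration by parts in the diffusion terms with no boundary-layer corrections.

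The contributions of the first three terms of $E_k$ (the $\gamma,\alpha,\beta$ part) are, up to the new non-local forcing and to the fact that $U\neq y$, the classical hypocoercive energy for the enhanced dissipation of passive shear transport (cf.\ \cite{GCZ21,BCZ15}): writing $U'=1+W$, the $1$-component of the transport generates the usual coupling among the $\|\omega_k\|^2$, $\nu^{2/3}|k|^{-2/3}\|\partial_y\omega_k\|^2$ and cross levels, which for the hierarchy $c_\beta^2\ll c_\alpha\ll1$ closes into a positive multiple of $\mathrm{D}_{k,\gamma}+c_\alpha\mathrm{D}_{k,\alpha}+c_\beta\mathrm{D}_{k,\beta}$, while the $W$-component is $O(\delta_0)$ and gets absorbed. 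The genuinely new terms here are the non-local ones, the leading of which is $2\Re\langle ikU''\phi_k,\omega_k\rangle$ with $U''=W'$. I would handle it by inserting $\omega_k=\Delta_k\phi_k$ and integrating by parts (the boundary term vanishing since $\phi_k|_{y=\pm1}=0$, and terms of the form $ik\int U''|\cdot|^2$ being purely imaginary), which yields $\Re\langle ikU''\phi_k,\omega_k\rangle=-\Re\langle ikU'''\phi_k,\partial_y\phi_k\rangle$, bounded by $\|W''\|_{L^\infty}|k|\,\|\phi_k\|_{L^2}\|\partial_y\phi_k\|_{L^2}\lesssim\delta_0\|\nabla_k\phi_k\|_{L^2}^2\le\delta_0\mathrm{D}_{k,\tau}$ (using $|k|\|\phi_k\|_{L^2}\le\|\nabla_k\phi_k\|_{L^2}$). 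The crucial observation is that this is controlled only by the damping dissipation $\mathrm{D}_{k,\tau}$, which is not produced by the $\gamma,\alpha,\beta$ terms themselves — this is exactly why the $\mathfrak{J}_k$ terms are indispensable.

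The heart of the argument is the SIO part. First one records that $\mathfrak{J}_k$ is self-adjoint on $L^2$ (its kernel $\tfrac{k}{2i(y-y')}G_k(y,y')$ is purely imaginary and antisymmetric under $y\leftrightarrow y'$), is bounded on $L^2$ uniformly in $k$ (proven below), and is independent of $t$, so $\tfrac{d}{dt}\Re\langle\omega_k,\mathfrak{J}_k\omega_k\rangle=2\Re\langle\partial_t\omega_k,\mathfrak{J}_k\omega_k\rangle$. Substituting $\eqref{NSchannel}$, the decisive contribution is the transport term, $-2\Re\langle ikU\omega_k,\mathfrak{J}_k\omega_k\rangle=-ik\langle[\mathfrak{J}_k,U]\omega_k,\omega_k\rangle$ (by self-adjointness of $\mathfrak{J}_k$). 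The algebraic engine is that commuting with multiplication by $y$ cancels the singular factor: the kernel of $[\mathfrak{J}_k,y]$ is $(y'-y)$ times that of $\mathfrak{J}_k$, namely $-\tfrac{k}{2i}G_k(y,y')$, so $[\mathfrak{J}_k,y]=-\tfrac{k}{2i}\Delta_k^{-1}$ and $[\mathfrak{J}_k,y]\omega_k=-\tfrac{k}{2i}\phi_k$. Hence the $y$-piece of the transport contributes $-ik\langle[\mathfrak{J}_k,y]\omega_k,\omega_k\rangle=\tfrac{k^2}{2}\langle\phi_k,\omega_k\rangle=-\tfrac12\mathrm{D}_{k,\tau}$, since $\langle\phi_k,\omega_k\rangle=\langle\phi_k,\Delta_k\phi_k\rangle=-\|\nabla_k\phi_k\|_{L^2}^2$. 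In other words, transporting the SIO energy manufactures \emph{precisely} the inviscid-damping dissipation $-\tfrac12 c_\tau\mathrm{D}_{k,\tau}$; running the same computation on the $\partial_y$-level SIO term gives $-\tfrac12 c_\tau c_\alpha\mathrm{D}_{k,\tau\alpha}$, up to a harmonic boundary corrector (because $\partial_y\phi_k$ need not vanish on $y=\pm1$; equivalently $\Delta_k^{-1}\partial_y\omega_k\neq\partial_y\phi_k$) which trace estimates show to be lower order. This is the term that absorbs the $\delta_0\mathrm{D}_{k,\tau}$ produced above, once $\delta_0\ll c_\tau$.

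The remaining contributions to the SIO terms are errors: (a) the transport commutator with $U-y$, which is $O(\delta_0)$ in $H^5$; writing $[\mathfrak{J}_k,U-y]$ via the difference quotient $\tfrac{(U-y)(y')-(U-y)(y)}{y-y'}$ shows its kernel is $\lesssim\|W\|_{L^\infty}|k|\,|G_k(y,y')|$, and after replacing the two copies of $\omega_k$ by $\Delta_k\phi_k$ and integrating by parts (the surviving diagonal term being $\int W''|\phi_k|^2$, again $O(\delta_0)$) it lands in $\delta_0\mathrm{D}_{k,\tau}$; (b) the non-local forcing paired with $\mathfrak{J}_k\omega_k$, handled by moving $\mathfrak{J}_k$ onto the first slot and using $\omega_k=\Delta_k\phi_k$ together with the $L^2$-boundedness of $\mathfrak{J}_k$; (c) the viscous term $2\nu\Re\langle\Delta_k\omega_k,\mathfrak{J}_k\omega_k\rangle=-2\nu\Re\langle\nabla_k\omega_k,\nabla_k\mathfrak{J}_k\omega_k\rangle$ (with no boundary term, by $\partial_{yy}\omega_k|_{y=\pm1}=0$), absorbed into $\mathrm{D}_{k,\gamma}+c_\alpha\mathrm{D}_{k,\alpha}$ once $[\partial_y,\mathfrak{J}_k]$ is known to be bounded on $L^2$ uniformly in $k$ — a further explicit-kernel estimate; the cross term $-c_\beta(\cdots)\Re\langle ik\omega_k,\partial_y\omega_k\rangle$ produces analogous viscous and non-local errors. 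Summing everything and picking the constants in the stated order makes each error a fixed small fraction of $\mathrm{D}_k$, completing $\tfrac{d}{dt}E_k\le-C_0\mathrm{D}_k$. The hard part, I expect, lies entirely in the SIO analysis and is twofold: first, establishing the full package of $k$-uniform properties of $\mathfrak{J}_k$ — $L^2\to L^2$ boundedness, self-adjointness, the identity $[\mathfrak{J}_k,y]=-\tfrac{k}{2i}\Delta_k^{-1}$, boundedness of $[\partial_y,\mathfrak{J}_k]$ and of the commutators with smooth multiplications — from the delicate kernel $G_k(y,y')/(y-y')$, which simultaneously carries a principal-value (Hilbert-transform-type) diagonal singularity and the exponential profile of $G_k$, so that the $|k|$ prefactor and the $|k|^{-1}$ hidden inside $G_k$ must be balanced throughout; and second, the bookkeeping needed to ensure that no error term ever produces an unabsorbable $\delta_0\|\omega_k\|_{L^2}^2$ (which $\mathrm{D}_k$ cannot dominate uniformly in $\nu$), which forces one to systematically convert every stray $\omega_k$ into $\Delta_k\phi_k$ and integrate by parts, exploiting that $U-y$ and all the non-local forcings are intrinsically $\phi_k$-objects so that the errors are proportional to $\|\nabla_k\phi_k\|_{L^2}^2\simeq|k|^{-2}\mathrm{D}_{k,\tau}$ rather than to $\|\omega_k\|_{L^2}^2$.
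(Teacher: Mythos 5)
Your plan is correct and follows the same architecture as the paper's proof (split $\tfrac{d}{dt}E_k$ into $\gamma,\alpha,\beta,\tau,\tau\alpha$ contributions, treat them lemma by lemma using $L^2\to L^2$ boundedness, self-adjointness and $[\partial_y,\mathfrak J_k]$-boundedness of the SIO, then choose constants). The genuine point of departure is in the decisive $T_1$ computation. You identify and use the clean operator identity $[\mathfrak{J}_k,y]=-\tfrac{k}{2i}\Delta_k^{-1}$, obtained because multiplication by $(y'-y)$ cancels the principal-value singularity, and then write $[\mathfrak J_k,U]=[\mathfrak J_k,y]+[\mathfrak J_k,U-y]$, so that the Couette piece manufactures exactly $-\tfrac12 c_\tau\mathrm{D}_{k,\tau}$ (your factor of $\tfrac12$ checks out, since the paper's $T_{11}\approx-\tfrac{|k|^2}{4}\|\nabla_k\phi_k\|^2$ sits inside $\tfrac12\tfrac{d}{dt}$). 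The paper instead inserts the Taylor expansion $U(y)-U(y')=U'(y)(y-y')+\tfrac12 U''(y)(y-y')^2+r(y,y')(y-y')$ directly into the double-integral representation, so its leading term carries the full coefficient $U'(y)$ and yields $-\tfrac{|k|^2}{4}\|\sqrt{U'}\nabla_k\phi_k\|^2$, with the $U''$- and $r$-kernels $\mathcal N_k=U''(y)(y-y')G_k$ and $rG_k$ handled by explicit $L^2_{y,y'}$-kernel estimates after twice integrating by parts. Both routes are equivalent in substance; your commutator identity is arguably more transparent algebraically, while the paper's Taylor split keeps the bookkeeping manifestly tied to $\|W\|_{C^3}$.

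Two places where you are slightly too casual, though they do not break the argument. First, the assertion that the diagonal of $[\mathfrak J_k,U-y]$ (after $\omega_k\mapsto\Delta_k\phi_k$ and two integrations by parts) leaves a term $\int W''|\phi_k|^2$ is not literally what comes out: since the kernel of $[\mathfrak J_k,U-y]$ is $\tfrac{k}{2i}G_k(y,y')\cdot\tfrac{(U-y)(y')-(U-y)(y)}{y-y'}$ and the second factor vanishes to first order on the diagonal, the singular $\delta(y-y')$ produced when both $\partial_y$ and $\partial_{y'}$ hit $G_k$ is killed, and what survives are mixed kernel terms bounded in $L^2_{y,y'}$ by $\delta_0$; the net result is still $O(\delta_0)\mathrm{D}_{k,\tau}$, which is what you need, but the intermediate claim should be corrected. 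Second, at the $\tau\alpha$ level you correctly flag that $\partial_y\phi_k$ is not a Dirichlet quantity; the paper's way of making this precise is to differentiate the $\omega_k$-equation and observe that the extra commutator $ikU''[\partial_y,\Delta_k^{-1}]\omega_k$ has an explicit kernel $\partial_yG_k+\partial_{y'}G_k$ whose $L^2\to L^2$ norm is $O(1)$, landing it among the controlled error terms; writing "trace estimates show it is lower order" is too vague to stand as a proof step, so you'd need to replace it by an argument of this kind. With those two points tightened, and the constants chosen in an order compatible with $c_\beta^2\ll c_\alpha\ll c_\beta\ll1$ and $\delta_0\ll c_\tau$, your proof goes through.
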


The definition of $\mathfrak{J}_k$ comes from a physical-side representation of a Fourier multiplier employed in the works \cite{BGM15I,BMV14, BVW16,Zillinger2014}.
In the case of $\mathbb{T}\times\rr$, a similar physical-side SIO as $\mathfrak{J}_k$ was used in the recent work \cite{LiZhao23}.
In \cite{BMV16}, the Fourier multiplier was employed for the same purpose of dealing with the non-local term in the linearized equation \eqref{NSchannel}. 
In the work \cite{Zillinger2014,BVW16}, the problem was re-written in the coordinates $f(t,z,y) = \omega_k(t,z+ty,y)$ and a norm based on the following energy estimate for the linearized Euler equations was made
\begin{align*}
\frac{d}{dt}\norm{\mathcal{M}(t,k,\partial_y) f_k}_{L^2}^2 + (1-C\delta_0)\norm{\sqrt{\frac{-\partial_t \mathcal{M}}{\mathcal{M}}} \mathcal{M} f_k}_{L^2}^2 \leq 0, 
\end{align*}
where the Fourier multiplier $\mathcal{M}$ is chosen such that 
\begin{align*}
& \mathcal{M}(0,k,\eta) = 1, \qquad \ 
 \frac{d}{dt}\mathcal{M}(t,k,\eta) = -\frac{\abs{k}^2}{k^2 + \abs{\eta - tk}^2} \mathcal{M}(t,k,\eta). 
\end{align*}
Undoing the coordinates shows that the term involving $\partial_t \mathcal{M}/\mathcal{M}$ is in fact $\abs{k}^2 \norm{\grad_k \phi_k}^2_{L^2}$. 
The operator $\mathcal{M}(t,k,\partial_y)$ is order zero and $\norm{\mathcal{M} f}_{L^2} \approx \norm{f}_{L^2}$, which has led to the use of the word ``ghost multiplier'' (in analogy with the ghost energy method of Alinhac \cite{Alinhac01}).
One explicitly computes that (using that $\arctan$ is odd),  
\begin{align*}
\mathcal{M}(t,k,\eta) = \exp \left( \int_0^{t} \frac{1}{1 + \abs{\tau - \frac{\eta}{k}}^2} \dee \tau \right) = \exp \left( \arctan(t + \frac{\eta}{k}) - \arctan(\frac{\eta}{k}) \right).
\end{align*} 
Undoing the shift, 
\begin{align*}
\mathcal{M}(t,k,\eta-tk) = \exp\left( \arctan\frac{\eta}{k} - \arctan(\frac{\eta}{k} - t) \right) \to \exp\left( \arctan\frac{\eta}{k} + \pi/2\right).  
\end{align*}
For $\eta/k > 0$ as $\eta \to \infty$ this expands as
\begin{align*}
\exp\left( \arctan\frac{\eta}{k} + \pi/2\right) \approx e^{\pi}\left(1 + \arctan\frac{\eta}{k} + \pi/2 + ... \right). 
\end{align*}
We observe that the inverse Fourier transform of $\arctan$ is given by 
\begin{align*}
\int_{\mathbb R} e^{i\eta y} \arctan \frac{\eta}{k} d\eta =  -\frac{1}{iyk}\int_{\mathbb R} e^{i\eta y} \frac{\abs{k}^2}{\abs{k}^2 + \abs{\eta}^2} d\eta = -\frac{k}{iy} G_k(y), 
\end{align*}
where $G_k$ would be the fundamental solution of $-\Delta_k$ on $\mathbb R$. 
The resulting singular integral operator should be the most important aspect of $\mathcal{M}$ on the physical-side.
This motivates the definition of $\mathfrak{J}_k$, though it is not a priori clear why using $\mathfrak{J}_k$ should be sufficient to deduce inviscid damping, even without further adaptation to the time-dependent shear flow. 
The properties of $\mathfrak{J}_k$ are outlined in Section \ref{sec:tau} while the proof that its use provides the desired inviscid damping is in Section \ref{sec:LinTau}. 

\subsection{Nonlinear Problem}
For the nonlinear problem we use the following natural nonlinear extension of the linear energy functional (with the time-decay built in):
\begin{subequations}
\begin{align}
  \mathcal{E}_0 & := e^{2\delta_\ast \nu t} \| \omega_{0} \|_{L^2}^2   +  c_{\alpha}e^{2\delta_\ast \nu t} \| \nu^{1/3} \p_y  \omega_{0} \|_{L^2}^2,\label{mcl_E_0}  \\
  \mathcal{E}_{\neq} & := \sum_{k \neq 0}  e^{2\delta_\ast \nu^{1/3} t} \abs{k}^{2m} E_k[\omega_k], \label{mcl_E_nq} \\ 
  \mathcal{E} & := \mathcal{E}_0 + \mathcal{E}_{\neq}.\label{mcl_E} 
\end{align}
\end{subequations}
This energy comes with the associated dissipation functional\begin{subequations}
\begin{align}
  \mathcal{D}_0 & := e^{2\delta_\ast \nu t} \nu \| \partial_y \omega_{0} \|_{L^2}^2   + c_{\alpha}   e^{2\delta_\ast \nu t}\nu   \|  \nu^{1/3}{\partial_y^2}  \omega_{0} \|_{L^2}^2,\label{mcl_D_0} \\
  \mathcal{D}_{\neq} & := \sum_{k \neq 0} e^{2\delta_\ast \nu^{1/3} t} \abs{k}^{2m} \mathrm{D}_k[ \omega_k].\label{mcl_D_nq}
\end{align}
\end{subequations}
We also use the corresponding notation for the dissipation operators such as $\mathrm{D}_{k,\gamma}$, $\mathrm{D}_{k,\alpha}$, etc. as in \eqref{def:Dks}  
\begin{align}\label{def:Dkscl}  
\mathcal{D}_{(\ast)} := \sum_{k \neq 0} e^{2\delta_\ast \nu^{1/3} t} \abs{k}^{2m} \mathrm{D}_{k,(\ast)}[ \omega_k]. 
\end{align}
We note that the definitions $\mathcal{D}_\ast$ and $\mathrm{D}_\ast$ differ by $|k|^{2m}$ factor and the time weight $e^{2\delta_\ast\nu t},e^{2\dnt}$. 
For future convenience we also define
\begin{align}\label{D_E}
\mathcal{D}_{E} := \nu \mathcal{E}_0 + \nu^{1/3} \sum_{k \neq 0}  e^{2\delta_\ast \nu^{1/3} t} \abs{k}^{2m + 2/3} E_k[\omega_k]. 
\end{align}
Finally, we define
\begin{align}
  \mathcal{D}&:=\mathcal{D}_0+\mathcal{D}_\nq+\mathcal{D}_E. \label{mcl_D}
\end{align}
Theorem \ref{thm:main} is an immediate consequence of the following energy estimate, proved in Section \ref{sec:NL}.
\begin{proposition} \label{prop:MainBoot} 
There exists a constant $C_0$, depending only on the parameters $\delta_\ast$, $\delta_0$, and $m\in(2/3,1)$ such that 
\begin{align}
  \label{ineq:MainEst}
\frac{d}{dt} \mathcal{E} + 4\delta_\ast\mathcal{D}  \leq \left(C_0 \nu^{-1}\mathcal{E}\right)^{1/2} \mathcal{D}.
\end{align}
Here $\mathcal{E}$ and $\mathcal{D}$ are defined in \eqref{mcl_E} and \eqref{mcl_D}. 
In particular, the bound \eqref{ineq:MainEst} implies that if $\mathcal{E}|_{t=0} \leq \frac{\delta_\ast^2}{C_0} \nu$, then the following global estimate 
\begin{align*}
\sup_{t \geq 0} \mathcal{E}(t) + 2\delta_\ast\int_0^\infty \mathcal{D}(s) \dee s \leq \mathcal{E}(0)
\end{align*}
holds. 
\end{proposition}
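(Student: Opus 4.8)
\emph{Proof plan.} The global bound claimed at the end follows from the differential inequality \eqref{ineq:MainEst} by a standard continuity argument, so the plan is to establish \eqref{ineq:MainEst} first. I would split $\tfrac{d}{dt}\mathcal E = \tfrac{d}{dt}\mathcal E_0 + \tfrac{d}{dt}\mathcal E_{\neq}$. Projecting \eqref{eq:omega} onto the $x$-average annihilates the linear terms $U\partial_x\omega$ and $U''\partial_x\phi$ and leaves $\partial_t\omega_0 = \nu\partial_{yy}\omega_0 + N_0$ with $N_0 := -(u\cdot\nabla\omega)_0 = \partial_y\sum_{k\neq0}(ik\phi_k)\omega_{-k}$, where I used $\nabla\cdot u = 0$ and that the zero Fourier modes of $u_2$ and of $(\partial_y\phi_0)\partial_x\omega$ vanish. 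Testing against $e^{2\delta_\ast\nu t}\omega_0$ and against $c_\alpha\nu^{2/3}e^{2\delta_\ast\nu t}\partial_{yy}\omega_0$, all boundary terms vanish: $\omega_0(\pm1)=0$ directly, and differentiating the equation at $y=\pm1$ while using $\omega_{\neq}(\pm1)=0$ and the no-penetration condition shows $\partial_{yy}\omega_0(\pm1)=0$. Since $\omega_0(\pm1)=0$, the Poincar\'e inequality lets both the time-weight term $2\delta_\ast\nu\mathcal E_0$ and the $\nu\mathcal E_0$ inside $\mathcal D_E$ be absorbed into a fraction of $\mathcal D_0$ for $\delta_\ast$ small, yielding $\tfrac{d}{dt}\mathcal E_0 + 4\delta_\ast(\mathcal D_0 + \nu\mathcal E_0) \le |\mathcal N_0|$, where $\mathcal N_0$ collects the pairings of $N_0$ and $\partial_{yy}N_0$ against $\omega_0$.

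For $k\neq0$ the equation for $\omega_k$ is \eqref{NSchannel}$_k$ plus the forcing $N_k := -(u\cdot\nabla\omega)_k$. Since $E_k$ in \eqref{def:Ek} is a quadratic form built from the self-adjoint operator $1+c_\tau\mathfrak J_k$ (self-adjointness follows from $G_k$ being real and symmetric) and the skew pairing $\langle ik\,\cdot\,,\partial_y\,\cdot\,\rangle$, the forcing contributes to $\tfrac{d}{dt}E_k[\omega_k]$ only through the pairings of $N_k$ and $\partial_y N_k$ against $(1+c_\tau\mathfrak J_k)\omega_k$, $(1+c_\tau\mathfrak J_k)\partial_y\omega_k$, $ik\omega_k$ and $\partial_y\omega_k$ (with the weights of \eqref{def:Ek}), while the remaining part of $\tfrac{d}{dt}E_k$ is controlled by Proposition \ref{prop:lin}: it is $\le -8\delta_\ast\mathrm D_k[\omega_k] - 8\delta_\ast\nu^{1/3}|k|^{2/3}E_k[\omega_k]$. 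Multiplying by $e^{2\delta_\ast\nu^{1/3}t}|k|^{2m}$, adding the time-weight contribution $2\delta_\ast\nu^{1/3}e^{2\delta_\ast\nu^{1/3}t}|k|^{2m}E_k$, summing over $k\neq0$, and using $|k|\ge1$ so that $\nu^{1/3}\mathcal E_{\neq}\le\nu^{1/3}\sum_{k\neq0}e^{2\delta_\ast\nu^{1/3}t}|k|^{2m+2/3}E_k$, the linear plus time-weight part combines to $-4\delta_\ast\mathcal D_{\neq} - 4\delta_\ast\nu^{1/3}\sum_{k\neq0}e^{2\delta_\ast\nu^{1/3}t}|k|^{2m+2/3}E_k$. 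Adding this to the zero-mode estimate and noting that $\mathcal D = \mathcal D_0 + \mathcal D_{\neq} + \mathcal D_E$ with $\mathcal D_E$ exactly the leftover terms, I obtain $\tfrac{d}{dt}\mathcal E + 4\delta_\ast\mathcal D \le \mathcal N$, where $\mathcal N$ collects $\mathcal N_0$ and the weighted $k$-sums of the $N_k$-pairings. It remains to prove $|\mathcal N| \lesssim (\nu^{-1}\mathcal E)^{1/2}\mathcal D$.

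For this I would decompose $-u\cdot\nabla\omega$ into: (i) transport of the $x$-dependent modes by the perturbation shear, $k$-th piece $-(\partial_y\phi_0)\,ik\omega_k$; (ii) the reaction term, whose $k$-th mode is (up to sign) $ik\phi_k\,\partial_y\omega_0$; and (iii) the self-interaction $u_{\neq}\cdot\nabla\omega_{\neq}$, projected onto the zero mode (this is $\mathcal N_0$) and onto nonzero modes. In (i), the pairing against $\omega_k$ is purely imaginary (as $\partial_y\phi_0$ is real) and drops, and the pairings against $\mathfrak J_k\omega_k$, $\partial_y\omega_k$, $\mathfrak J_k\partial_y\omega_k$ are symmetrized to expose the commutator $[\partial_y\phi_0,\mathfrak J_k]$ together with the extra factor $\partial_{yy}\phi_0=\omega_0$ arising from differentiating the shear; since $\mathfrak J_k$ has kernel $|k|^{-\delta}k\,G_k(y,y')/(2i(y-y'))$ with $\sup_y\int|G_k(y,y')|\,\dee y'\lesssim|k|^{-2}$, the commutator has $L^2\to L^2$ operator norm $\lesssim|k|^{-1-\delta}\|\partial_{yy}\phi_0\|_{L^\infty_y}=|k|^{-1-\delta}\|\omega_0\|_{L^\infty_y}$, and this single gain of $|k|^{-\delta}$ (together with $m<1$) absorbs the $|k|^{2m}$ weight, closing against $\mathcal D_{(\beta)}$, $\mathcal D_{(\alpha)}$ and the $\nu\mathcal E_0$ part of $\mathcal D_E$ after $\|\omega_0\|_{L^\infty_y}\lesssim\|\omega_0\|_{L^2_y}^{1/2}\|\partial_y\omega_0\|_{L^2_y}^{1/2}$. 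In (ii) the factor $\partial_y\omega_0$ costs $\nu^{-1/2}\mathcal D_0^{1/2}$, the factors $\phi_k$ and $ik\phi_k$ are controlled by the inviscid-damping dissipations $\mathcal D_{(\tau)}$ and $\mathcal D_{(\tau\alpha)}$, and the remaining test factor by $\mathcal E^{1/2}$ or $\mathcal D_{(\beta)}^{1/2}$, the $|k|$-bookkeeping requiring $m>2/3$. In (iii) I would use the paraproduct split $k=k_1+k_2$ with $|k|^{2m}\lesssim|k_1|^{2m}+|k_2|^{2m}$, distributing the three output factors as one low-regularity factor bounded by $\mathcal E^{1/2}$, one first-order factor $ik\omega$ or $\partial_y\omega$ bounded by $\nu^{-1/2}\mathcal D_{(\gamma)}^{1/2}$, and one factor bounded by $\mathcal D_{(\tau)}^{1/2}$ or $\mathcal D_{(\beta)}^{1/2}$; the extra power of $|k|$ gained in passing from $\omega_k$ to $u_k = \nabla_k^\perp\Delta_k^{-1}\omega_k$, with $m\in(2/3,1)$ and $\delta$ small, makes all the $k$-sums summable. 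Collecting, $|\mathcal N|\le(C_0\nu^{-1}\mathcal E)^{1/2}\mathcal D$, which is \eqref{ineq:MainEst}.

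Finally, for the consequence: let $T^\ast$ be the largest time with $C_0\nu^{-1}\mathcal E(t)\le4\delta_\ast^2$ on $[0,T^\ast)$; this is positive by $\mathcal E(0)\le\delta_\ast^2\nu/C_0$ together with local well-posedness and continuity of $t\mapsto\mathcal E(t)$. On $[0,T^\ast)$, \eqref{ineq:MainEst} gives $\tfrac{d}{dt}\mathcal E + 2\delta_\ast\mathcal D\le0$, so $\mathcal E$ is nonincreasing and hence $\mathcal E(t)\le\mathcal E(0)\le\delta_\ast^2\nu/C_0$, a strict improvement of the defining bound; by continuity $T^\ast=\infty$, and integrating over $[0,\infty)$ gives $\sup_{t\ge0}\mathcal E(t)+2\delta_\ast\int_0^\infty\mathcal D(s)\,\dee s\le\mathcal E(0)$. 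The main obstacle is the nonlinear step, specifically handling the singular integral operator $\mathfrak J_k$ in the presence of the nonlinearity — the commutators of $\mathfrak J_k$ with multiplication by $\partial_y\phi_0$ and the pairings $\langle N_k,\mathfrak J_k\omega_k\rangle$ — simultaneously with the reaction term $ik\phi_k\,\partial_y\omega_0$, which couples the weak, $\nu$-weighted zero-mode dissipation to the inviscid-damping dissipation, all while keeping the $|k|^{2m}$ weights balanced inside the narrow window $m\in(2/3,1)$ and the small loss $\delta$.
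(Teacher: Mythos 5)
Your plan follows the same overall architecture as the paper's proof: split $\tfrac{d}{dt}\mathcal E$ into zero and nonzero modes, integrate the zero mode by parts using $\omega_0(\pm1)=\partial_{yy}\omega_0(\pm1)=0$, invoke Proposition~\ref{prop:lin} for the linearized nonzero-mode dynamics, decompose the forcing into the three pieces $T_{0\neq}$, $T_{\neq 0}$, $T_{\neq\neq}$, establish the trilinear bound $|\mathcal{NL}|\lesssim(\nu^{-1}\mathcal E)^{1/2}\mathcal D$, match $\mathcal D_E$ to the leftover linear coercivity, and finish by continuity. This is precisely the paper's route via Lemma~\ref{lem:NLzero} and Theorem~\ref{thm:NLNzero} (itself fed by Lemmas~\ref{lem:gatau}--\ref{lem:beta}).

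The one genuinely different local step is your treatment of the $\tau$-part of $T_{0\neq}$: you symmetrize through Lemma~\ref{lem:AS} to expose the commutator $[\partial_y\phi_0,\mathfrak J_k]$ and use a Schur bound $\|[\partial_y\phi_0,\mathfrak J_k]\|_{L^2\to L^2}\lesssim|k|^{-1}\|\omega_0\|_{L^\infty}$. That calculation is sound (the antisymmetry identities do reduce $\Re\,ik\langle\omega_k,\partial_y\phi_0\,\mathfrak J_k\omega_k\rangle$ to the commutator). The paper, however, never commutes multiplication past $\mathfrak J_k$ here: after $\Re\langle\omega_k,\partial_y\phi_0\,ik\omega_k\rangle=0$, it bounds $\Re\langle\omega_k,c_\tau\mathfrak J_k(\partial_y\phi_0\,ik\omega_k)\rangle$ crudely by $|k|^{2m+1}\|\omega_k\|_{L^2}^2\|\partial_y\phi_0\|_{L^\infty}$ via Lemma~\ref{lem:BoundI} alone, and pays for the extra $|k|$ with the interpolation Lemma~\ref{lem:komegaD} ($\sum_{k\neq0}|k|^{1+2m}\|\omega_k\|_{L^2}^2\lesssim\nu^{-1/2}\mathcal D_\beta^{3/4}\mathcal D_\gamma^{1/4}e^{-2\dnt}$). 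Your version exposes the cancellation more directly at the cost of one more commutator lemma; the paper's version reuses an interpolation estimate that recurs in several places. Both close to $(\nu^{-1}\mathcal E)^{1/2}\mathcal D$. One small slip to flag: you invoke a gain of $|k|^{-\delta}$, but in this paper $\mathfrak J_k$ carries a plain $|k|$ weight (there is no $\delta$-loss built into the operator), so that gain is not available; the bookkeeping nonetheless closes using only $|k|\ge1$, as above.
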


\subsection{Notations}
Throughout the paper, the constant $C$ will change from line to line. For $A,B\geq 0$, we use the notation $A\lesssim B$ to highlight that there exists a constant $C>0$ such that $A\leq CB$. 

To avoid introducing too many symbols, we further define `local variables', denoted as $T_{\cdots}$, for example, $T_{0\nq}$ and $T_{\nq0}$. These notations represent terms that appear during the estimation in each subsection. Once a subsection is concluded, these notations will be redefined in the next subsection.

\section{Properties of the Inviscid Damping Energy Functional} \label{sec:tau}
Here we develop some properties on the operator $\mathfrak{J}$. 
For future notational convenience, we denote
\begin{align}
\mathfrak{J}_{k,\eps}[f] := \abs{k} \frac{k}{\abs{k}} \int_{-1}^{y-\eps} \frac{G_k(y, y') }{2i(y-y')} f(y') dy' + \abs{k} \frac{k}{\abs{k}} \int_{y+\eps}^1 \frac{ G_k(y, y')}{2i(y - y')} f(y') dy', 
\end{align}
so that by definition,  
\begin{align}
\mathfrak{J}_k[f] = \lim_{\eps \rightarrow 0} \mathfrak{J}_{k,\eps}[f]. 
\end{align}
We first prove that $\mathfrak{J}_k$ extends to a bounded linear operator $\mathfrak{J}_k:L^2 \to L^2$. 

\begin{lemma} \label{lem:BoundI}
The singular integral operator $\mathfrak{J}_k$ extends to a bounded linear operator on $L^2 \to L^2$ and moreover
\begin{align*}
\norm{\mathfrak{J}_k}_{L^2 \to L^2} \lesssim 1. 
\end{align*}
\end{lemma}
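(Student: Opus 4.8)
The goal is to show the singular integral operator
\[
\mathfrak{J}_k[f](y) = \abs{k}\,\mathrm{p.v.}\,\frac{k}{\abs{k}}\int_{-1}^1 \frac{1}{2i(y-y')}G_k(y,y')f(y')\,dy'
\]
is bounded on $L^2([-1,1])$ uniformly in $k$. The kernel is $K_k(y,y') = \abs{k}\frac{k}{\abs{k}}\frac{G_k(y,y')}{2i(y-y')}$. The natural strategy is to split $K_k = K_k^{\mathrm{diag}} + K_k^{\mathrm{far}}$, where $K_k^{\mathrm{diag}}$ carries the singularity at $y=y'$ and $K_k^{\mathrm{far}}$ is a genuinely bounded (Hilbert--Schmidt) remainder. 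First I would record the size bounds on the Green's function coming from the explicit formula \eqref{G_k}: since $\sinh$ is increasing and positive on $(0,\infty)$, one has $0 \le G_k(y,y') \lesssim \min\{1/\abs{k}, \text{(distance to boundary)}\}$ and, crucially, the Lipschitz-type bound $G_k(y,y') = G_k(y,y) + O(\abs{y-y'})$ with constants uniform in $k$ — in fact $\abs{k}\,G_k(y,y)$ is bounded above and below, and $\abs{k}\,\abs{\partial_{y'}G_k(y,y')} \lesssim 1$ away from the diagonal plus an integrable-in-$k$ control. The point is that $\abs{k}G_k(y,y') / (y - y')$ behaves like $c(y)/(y-y')$ near the diagonal with a smooth, uniformly bounded coefficient, plus an $L^\infty$ (hence Hilbert--Schmidt on the bounded interval) error.

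Concretely, I would write $\abs{k}G_k(y,y') = \abs{k}G_k(y,y) + \abs{k}\big(G_k(y,y') - G_k(y,y)\big)$. The second piece divided by $2i(y-y')$ is bounded pointwise by a constant uniform in $k$ (by the mean value theorem and the derivative bound on $G_k$), so it defines a Hilbert--Schmidt operator on $L^2([-1,1])$ with norm $\lesssim 1$. The first piece gives $\frac{k}{\abs{k}}\abs{k}G_k(y,y)\cdot \frac{1}{2i}\,\mathrm{p.v.}\frac{1}{y-y'}$, i.e. a bounded multiplication operator $f \mapsto \tfrac{k}{\abs{k}}\abs{k}G_k(y,y) f$ composed with (a constant multiple of) the truncated Hilbert transform $H_{[-1,1]}f(y) = \mathrm{p.v.}\int_{-1}^1 \frac{f(y')}{y-y'}dy'$. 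The truncated Hilbert transform on an interval is a classical Calderón--Zygmund operator, bounded on $L^2$; since $\abs{k}G_k(y,y) \lesssim 1$ uniformly, this composition has operator norm $\lesssim 1$ independent of $k$. Adding the two contributions gives the claim. I would also need to justify that $\mathfrak{J}_{k,\eps}[f] \to \mathfrak{J}_k[f]$ in $L^2$ and that the principal-value limit agrees with what the Hilbert-transform decomposition produces; this follows from dominated convergence on the far part and the standard existence of the p.v. Hilbert transform on the diagonal part.

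\textbf{Main obstacle.} The routine part is the Hilbert transform bound; the part requiring care is establishing the \emph{uniform-in-$k$} estimates on $G_k(y,y)$ and on the difference quotient $\abs{k}(G_k(y,y')-G_k(y,y))/(y-y')$. From \eqref{G_k}, $G_k(y,y') = -\frac{1}{k\sinh(2k)}\sinh(k(1-y_{\vee}))\sinh(k(1+y_{\wedge}))$ where $y_\vee = \max(y,y')$, $y_\wedge = \min(y,y')$; one must check that differentiating in $y'$ (which toggles between the two branches) produces a kernel whose singularity is exactly of Hilbert-transform type with a coefficient $\abs{k}G_k(y,y) = \frac{\sinh(k(1-y))\sinh(k(1+y))}{\sinh(2k)}$ that is bounded by $1$ and bounded below by a positive constant on any compact subinterval, uniformly for all $k \neq 0$ (including the limit $\abs{k}\to\infty$, where $G_k$ becomes exponentially localized, and $\abs{k}\to 0$ along integers, which is not an issue since $k \in \Z\setminus\{0\}$ so $\abs{k}\ge 1$). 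The large-$\abs{k}$ regime is where the exponential factors must be tracked carefully, but there the kernel is in fact \emph{more} localized and the Hilbert--Schmidt bound on the off-diagonal part only improves, so the estimate is clean once the algebra is organized via the $\sinh$ addition formulas.
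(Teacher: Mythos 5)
Your proposed decomposition looks plausible at a glance, but it contains a genuine gap in the treatment of the remainder kernel that I do not see how to repair, and the specific bound you invoke is false.

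You write the kernel as $\abs{k}G_k(y,y)\cdot\frac{k}{\abs{k}}\frac{1}{2i(y-y')}$ plus a remainder $\frac{k}{\abs{k}}\frac{\abs{k}\bigl(G_k(y,y')-G_k(y,y)\bigr)}{2i(y-y')}$, and claim the remainder is uniformly bounded in $k$ pointwise (hence Hilbert--Schmidt) because ``$\abs{k}\,\abs{\partial_{y'}G_k(y,y')}\lesssim 1$.'' That estimate is off by a factor of $\abs{k}$. From the explicit formula \eqref{G_k}, for $y\le y'$ one has $\partial_{y'}G_k(y,y') = \frac{\cosh(k(1-y'))\sinh(k(1+y))}{\sinh(2k)}$, and evaluating at $y=y'=0$ gives $\frac{\cosh k\,\sinh k}{\sinh 2k}=\frac12$, so $\abs{\partial_{y'}G_k}\lesssim 1$ (no $\abs{k}$) and $\abs{k}\,\abs{\partial_{y'}G_k}\sim\abs{k}/2\to\infty$. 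Consequently your remainder kernel is of size $O(\abs{k})$ near the diagonal, and even in the intermediate range $1/\abs{k}\ll \abs{y-y'}\lesssim 1$ it decays only like $\abs{k}G_k(y,y)/\abs{y-y'}\sim 1/\abs{y-y'}$, which is not square-integrable across the diagonal; in either regime the Hilbert--Schmidt norm is not uniform in $k$. (A smaller point: $\abs{k}G_k(y,y)=\frac{\sinh(k(1-y))\sinh(k(1+y))}{\sinh(2k)}$ tends to zero as $y\to\pm1$, so it is bounded above but not below; this part of your claim is unused, but note it.)

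The paper avoids this by not subtracting $G_k(y,y)$ globally. It splits the integral into the window $\abs{y-y'}<1/\abs{k}$ and its complement, giving three pieces $\mathcal{T}_1,\mathcal{T}_2,\mathcal{T}_3$. The far piece $\mathcal{T}_1$ is estimated by Schur's test, with the crucial factor $1/k$ coming from the exponential decay of $G_k$ over distances $\gtrsim 1/\abs{k}$: $\int_{y+1/k}^1 \frac{e^{-k(y'-y)}}{k(y'-y)}dy' = \frac1k\int_1^{k(1-y)}\frac{e^{-u}}{u}du\lesssim\frac1k$. The near piece is split as you suggest into $G_k(y,y)$ times a (truncated) Hilbert transform, $\mathcal{T}_2$, and the Lipschitz remainder $\mathcal{T}_3$, but both now live only on the window of width $2/k$: $\mathcal{T}_2$ gets a $1/k$ from $G_k(y,y)\lesssim 1/k$, and $\mathcal{T}_3$ gets a $1/k$ from the correct bound $\abs{G_k(y,y')-G_k(y,y)}/\abs{y-y'}\lesssim 1$ integrated over the small window. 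Multiplying the resulting $\|\mathcal{T}_i f\|_{L^2}\lesssim \frac{1}{k}\|f\|_{L^2}$ by the overall $\abs{k}$ in the definition of $\mathfrak{J}_k$ gives the claim. The moral is that the $1/k$ gain comes from the geometry of the window and the decay of $G_k$, not from a derivative bound of the size you assumed; your global subtraction conflates the two and the estimate fails as $\abs{k}\to\infty$.
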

\begin{remark}
Note that we therefore have $\forall f \in L^2$,  $\lim_{\eps \to 0} \norm{\mathfrak{J}_k[f] - \mathfrak{J}_{k,\eps}[f]} = 0$. 
\end{remark}

\begin{proof}We decompose 
\begin{align*}
\frac{1}{|k|}\mathfrak{J}_{k}f  = &  \frac{k}{\abs{k}} \textup{p.v.}  \int_{-1}^1 G_k(y,y') \frac{f(y')}{2i(y-y')} dy' \\   
= & \frac{k}{\abs{k}} \textup{p.v.}  \int_{[-1,1] - (y - \frac{1}{k}, y + \frac{1}{k})} G_k(y,y') \frac{f(y')}{2i(y-y')} dy' +  \frac{k}{\abs{k}} \textup{p.v.}  \int_{y - \frac{1}{k}}^{y + \frac{1}{k}} G_k(y,y') \frac{f(y')}{2i(y-y')} dy' \\
 = &\frac{k}{\abs{k}} \textup{p.v.}  \int_{[-1,1] - (y - \frac{1}{k}, y + \frac{1}{k})} G_k(y,y') \frac{f(y')}{2i(y-y')} dy'  +  \frac{k}{\abs{k}} G_k(y,y) \textup{p.v.}  \int_{y - \frac{1}{k}}^{y + \frac{1}{k}} \frac{f(y')}{2i(y-y')} dy' \\
 & +   \frac{k}{\abs{k}}  \textup{p.v.} \int_{y - \frac{1}{k}}^{y + \frac{1}{k}} [G_k(y,y') - G_k(y,y)]  \frac{f(y')}{2i(y-y')} dy' \\
 = & \mathcal{T}_1 + \mathcal{T}_2 + \mathcal{T}_3. 
\end{align*}
The strategy is as follows: for $\mathcal{T}_1$, we use a Schur-type estimate and the rapid decay of $G_k(y,y')$ to gain integrability. For $\mathcal{T}_2$, we use properties on the (truncated) Hilbert transform. For $\mathcal{T}_3$, we use $L^\infty_y L^1_{y'} + L^\infty_{y'} L^1_y$ bounds to gain the $1/k$. 

\vspace{2 mm}

\noindent \textit{Bound of $\mathcal{T}_1:$} The bound of $\mathcal{T}_1$ is perhaps the most subtle, because it relies on the properties of $G_k$. Nevertheless, we may split $\mathcal{T}_1 = \mathcal{T}_{1,\le} + \mathcal{T}_{1,\ge}$, where 
\begin{align}
\mathcal{T}_{1,\le} := & \frac{k}{\abs{k}} \textup{p.v.}  \int_{-1}^{y - \frac{1}{k}} G_k(y,y') \frac{f(y')}{2i(y-y')} dy'  \\
\mathcal{T}_{1,\ge} := & \frac{k}{\abs{k}} \textup{p.v.}  \int_{y + \frac{1}{k}}^1 G_k(y,y') \frac{f(y')}{2i(y-y')} dy'.
\end{align}
These two operators are treated analogously, so we focus on $\mathcal{T}_{1,\ge}$. First of all, we notice that we can remove the principle value due to excising the point $y' = y$, and we therefore can write 
\begin{align}
\mathcal{T}_{1,\ge} f = \frac{k}{|k|} \int \frac{G_{k}(y,y')}{2i(y-y')} \mathbbm{1}_{y + \frac{1}{k}}(y') f(y') dy' = \int_{-1}^1 \bold{K}_k(y, y') f(y') dy' 
\end{align}
By using Schur's test, we have 
\begin{align} \label{Sch:1}
\| \mathcal{T}_{1,\ge} f \|_{L^2} \lesssim (\| \bold{K}_k \|_{L^\infty_{y} L^1_{y'}} + \| \bold{K}_k \|_{L^\infty_{y'} L^1_{y}}) \| f \|_{L^2}
\end{align}
Therefore, we need to estimate
\begin{align}
\| \bold{K}_k(y,\cdot)  \|_{ L^1_{y'}} = \int_{y + \frac{1}{k}}^1 \frac{|G_k(y,y')|}{|y - y'|} dy' \lesssim & \int_{y + \frac{1}{k}}^1 \frac{\sinh(k(1-y')) \sinh(k(1 + y))}{k|y-y'| \sinh(2k)} dy' \\
\lesssim & \int_{y + \frac{1}{k}}^1 \frac{[e^{k(1 - y')}  - e^{-k(1-y')} ][ e^{k(1 + y)} - e^{-k(1+y)}  ] }{k|y-y'| e^{2k}} dy' \\
= & \int_{y + \frac{1}{k}}^1 \frac{e^{k(1 - y')}  e^{k(1 + y)}  }{k|y-y'| e^{2k}} dy'  + \text{Err}_1,
\end{align}
where the error integral above is defined as follows: 
\begin{align}
\text{Err}_1 := \int_{y + \frac1k}^1 \frac{e^{-k(1-y')}[e^{k(1 + y)} - e^{-k(1 + y)}]}{k|y' - y| e^{2k}} + \int_{y + \frac1k}^1 \frac{ [ e^{k(1-y')} - e^{-k(1-y')} ] e^{-k(1 + y)} }{k|y - y'|e^{2k}}
\end{align}
The main contribution simplifies as follows 
\begin{align}
\int_{y + \frac{1}{k}}^1 \frac{e^{k(1 - y')}  e^{k(1 + y)}  }{k|y-y'| e^{2k}} dy' = \int_{y + \frac{1}{k}}^1 \frac{e^{-k( y' - y)}  }{k(y' - y)} dy' = \frac{1}{k} \int_{u = 1}^{k(1-y)} \frac{e^{-u}}{u} du \lesssim \frac{1}{k}.
\end{align}
The bounds on the term $\text{Err}_1$ is simpler, and clearly also satisfies the bound above simply by using the following bound on the numerators:
\begin{align*}
e^{-k(1-y')}[e^{k(1 + y)} - e^{-k(1 + y)}] \lesssim &e^{k(1 - y')} e^{k(1 + y)}, \\
[ e^{k(1-y')} - e^{-k(1-y')} ] e^{-k(1 + y)} \lesssim & e^{k(1 - y')} e^{k(1 + y)}.
\end{align*}
Taking the supremum in $y$, we obtain the bound $\| \bold{K}_k  \|_{L^\infty_y L^1_{y'}} \lesssim 1/k$.  

We now need to estimate the quantity $\| \bold{K}_k  \|_{L^\infty_{y'}L^1_y} \lesssim 1/k$. We therefore fix a $y'$ in the range $1/k \le y' \le 1$. We then have 
\begin{align*}
\| \bold{K}_k(\cdot, y')  \|_{L^1_y} = & \int \frac{G_k(y,y')}{(y' - y)} \mathbbm{1}(y \le y'  - \frac{1}{k}) dy = \int_{y = 0}^{y' - \frac1k} \frac{G_k(y, y')}{|y - y'|} dy \\
\lesssim & \int_{y = 0}^{y' - \frac{1}{k}} \frac{  e^{k(1-y')} e^{k(1 + y)}  }{k(y'  - y) e^{2k}} dy  = \int_{y = 0}^{y' - \frac{1}{k}} \frac{e^{-k(y' - y)}}{k(y' - y)} dy \\
\lesssim & \frac{1}{k} \int_1^{ky'} \frac{e^{-u}}{u} du \lesssim \frac{1}{k}.
\end{align*}

Therefore, inserting these kernel bounds into \eqref{Sch:1}, we have 
\begin{align} \n 
\| \mathcal{T}_{1,\ge} f \|_{L^2} \lesssim (\| \bold{K}_k \|_{L^\infty_{y} L^1_{y'}} + \| \bold{K}_k \|_{L^\infty_{y'} L^1_{y}}) \| f \|_{L^2} \lesssim \frac{1}{k} \| f \|_{L^2}.
\end{align}
By applying the analogous argument also to $\mathcal{T}_{1,\le}$, we have 
\begin{align} \label{T1bd}
\| \mathcal{T}_{1} f \|_{L^2} \lesssim \frac{1}{k} \| f \|_{L^2}.
\end{align}
\vspace{2 mm}

\noindent \textit{Bound of $\mathcal{T}_2:$} For the operator $\mathcal{T}_2$, we use the boundedness properties of the truncated Hilbert transform. In particular, we proceed as follows:
\begin{align} \n
\| \mathcal{T}_2f\|_{L^2} = & \| \frac{k}{\abs{k}} G_k(y,y) \textup{p.v.}  \int_{y - \frac{1}{k}}^{y + \frac{1}{k}} \frac{f(y')}{2i(y-y')} dy' \|_{L^2} \lesssim \| G_k(y, y) \|_{L^\infty_{y}} \|  \textup{p.v.}  \int_{y - \frac{1}{k}}^{y + \frac{1}{k}} \frac{f(y')}{2i(y-y')} dy' \|_{L^2} \\ \label{htrn:1}
\lesssim & \frac{1}{k} \| \textup{p.v.}  \int_{y - \frac{1}{k}}^{y + \frac{1}{k}} \frac{f(y')}{2i(y-y')} dy' \|_{L^2}.
\end{align}
It remains thus to estimate the $L^2 \rightarrow L^2$ boundedness of the operator above. To do so, we realize this as the difference between the classical Hilbert transform, and the truncated Hilbert transform. Indeed, define 
\begin{align}
\mathcal{H}f := &\textup{p.v.}  \int_{-1}^{1} \frac{f(y')}{2i(y-y')} dy' \\
\mathcal{H}_{\delta} f := &  \textup{p.v.}  \int_{-1}^{y-\delta} \frac{f(y')}{2i(y-y')} dy' + \int_{y+\delta}^{1} \frac{f(y')}{2i(y-y')} dy', \\
\overline{\mathcal{T}}_{2} := & \textup{p.v.}  \int_{y - \frac{1}{k}}^{y + \frac{1}{k}} \frac{f(y')}{2i(y-y')} dy' 
\end{align}
{with the convention that 
	\begin{align*}
		\textup{p.v.}\int_{-1}^{y-\delta} \frac{f(y')}{2i(y-y')} dy'=0, &\ \ \ \mbox{if}\  y-\delta<-1;\\
		\textup{p.v.} \int_{y+\delta}^{1} \frac{f(y')}{2i(y-y')} dy' = 0, &\ \ \ \mbox{if}\  y+\delta >1.
	\end{align*}}
Then, we have 
\begin{align*}
\overline{\mathcal{T}}_{2} = \mathcal{H}f - \mathcal{H}_{1/k}f,  
\end{align*}
from which the $L^2 \rightarrow L^2$ boundedness of $\overline{\mathcal{T}}_2$ follows from the classical corresponding boundedness estimates (uniform in $\delta$): 
\begin{align*}
\| \mathcal{H}f \|_{L^2} \lesssim \| f \|_{L^2}, \qquad \| \mathcal{H}_{\delta}f \|_{L^2} \lesssim \| f \|_{L^2}.
\end{align*}
Therefore, continuing from \eqref{htrn:1}, we have 
\begin{align} \label{T2bd}
\| \mathcal{T}_2f\|_{L^2} \lesssim \frac{1}{k} \| \overline{\mathcal{T}}_2f\|_{L^2} \lesssim & \frac{1}{k} \| f \|_{L^2}.
\end{align}
\vspace{2 mm}

\noindent \textit{Bound of $\mathcal{T}_3:$} For the operator $\mathcal{T}_3$, we use Schur's Test again. In this case, we have 
\begin{align*}
\int_{y - \frac{1}{k}}^{y + \frac{1}{k}} \frac{|G_k(y, y') - G_k(y,y)|}{|y - y'|} dy' \lesssim \sup_{y, y'} \| \frac{G_k(y,y') - G_k(y,y)}{|y - y'|} \Big| \int_{y - \frac{1}{k}}^{y + \frac1k} dy' \lesssim \frac{1}{k}, \\
\int_{y' - \frac{1}{k}}^{y' + \frac{1}{k}} \frac{|G_k(y, y') - G_k(y,y)|}{|y - y'|} dy \lesssim \sup_{y, y'} \| \frac{G_k(y,y') - G_k(y,y)}{|y - y'|} \Big| \int_{y' - \frac{1}{k}}^{y' + \frac1k} dy \lesssim \frac{1}{k}. 
\end{align*}  
Therefore, again by applying Schur's test, we have 
\begin{align} \label{T3bd}
\| \mathcal{T}_3f\|_{L^2}  \lesssim & \frac{1}{k} \| f \|_{L^2}.
\end{align}
Combining the bounds \eqref{T1bd}, \eqref{T2bd}, \eqref{T3bd}, we obtain the desired result. 
\end{proof}


The next lemma captures the commutator between $\partial_y$ and $\mathfrak{J}_k$. 
\begin{lemma} \label{lem:CommIpy}
If $f \in H^1$ then $\mathfrak{J}_k[f] \in H^1$ we define 
\begin{align*}
[\partial_y, \mathfrak{J}_k] = \mathfrak{H}_k,
\end{align*}
where
\begin{align*}
\mathfrak{H}_k[f] = |k| \textup{p.v.} \int_{-1}^1 \frac{H_k(y,y')}{2i(y-y')} f(y') \dee y', 
\end{align*}
where $H_k$ is a continuous function given by the formula
\begin{align}
H_k(y,y') := -\frac{\sinh(k(y+y'))}{\sinh(2k)}
\end{align}
\end{lemma}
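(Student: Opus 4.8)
The plan is to differentiate the integral defining $\mathfrak{J}_k[f]$ with respect to the outer variable $y$ and carefully track which pieces produce the commutator. Writing
\[
\mathfrak{J}_k[f](y) = |k|\,\frac{k}{|k|}\,\textup{p.v.}\int_{-1}^1 \frac{G_k(y,y')}{2i(y-y')} f(y')\,\dee y',
\]
I would work with the $\eps$-truncation $\mathfrak{J}_{k,\eps}$ so that all differentiations under the integral sign are legitimate, and pass to the limit $\eps\to 0$ at the end using Lemma~\ref{lem:BoundI} (and its analogue for the kernel $H_k$). When $\partial_y$ hits the truncated integral, there are two types of contributions: (i) boundary terms at $y'=y\pm\eps$ coming from the $\eps$-dependent limits of integration, and (ii) the interior term where $\partial_y$ lands on the integrand. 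For the integrand, $\partial_y\big[\tfrac{G_k(y,y')}{2i(y-y')}\big] = \tfrac{\partial_y G_k(y,y')}{2i(y-y')} - \tfrac{G_k(y,y')}{2i(y-y')^2}$. The first of these is precisely the candidate $\mathfrak{H}_k$ once one computes $\partial_y G_k$ away from the diagonal; the second, together with the $\eps$-boundary terms from (i), must combine to reproduce $\partial_y$ acting \emph{outside} the operator, i.e. $\mathfrak{J}_k[\partial_y f]$ after an integration by parts. Indeed, integrating by parts in $y'$ on $\mathfrak{J}_{k,\eps}[f]$ moves the $\partial_{y'}$ onto $f$ and onto the kernel $\tfrac{G_k(y,y')}{2i(y-y')}$; since $\partial_{y'}\tfrac{1}{y-y'} = \tfrac{1}{(y-y')^2} = -\partial_y\tfrac{1}{y-y'}$, the singular $\tfrac{G_k}{(y-y')^2}$ pieces from the two computations cancel, and the $\eps$-boundary terms from the two procedures also cancel in the limit because $G_k(y,y)$ is finite and the two one-sided boundary contributions at $y\pm\eps$ have opposite signs and equal magnitude as $\eps\to 0$.

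Concretely, the key identity to establish is
\[
\partial_y \mathfrak{J}_{k,\eps}[f] - \mathfrak{J}_{k,\eps}[\partial_y f] \;=\; |k|\,\frac{k}{|k|}\,\int \frac{(\partial_y + \partial_{y'})G_k(y,y')}{2i(y-y')} f(y')\,\dee y' \;+\; (\text{$\eps$-boundary terms}),
\]
where I have used the identity above to trade $\tfrac{G_k}{(y-y')^2}$ for a $\partial_{y'}$ derivative and then integrated by parts. So the commutator kernel is governed by $(\partial_y+\partial_{y'})G_k$. From the explicit formula \eqref{G_k}, on each of the two regions $y\lessgtr y'$ one computes, using $\partial_y\sinh(k(1\pm y)) = \pm k\cosh(k(1\pm y))$ and the hyperbolic addition formula, that
\[
(\partial_y+\partial_{y'})G_k(y,y') = -\frac{\sinh(k(y+y'))}{\sinh(2k)} =: H_k(y,y'),
\]
and crucially this expression is the \emph{same} on both sides of the diagonal — so $H_k$ is continuous across $y'=y$ (it does not inherit the jump in $\partial_y G_k$, which instead is responsible for the $G_k/(y-y')^2$ term having been disposed of). This continuity is exactly what guarantees $\mathfrak{H}_k$ is a bona fide singular integral operator of the same Calderón–Zygmund type as $\mathfrak{J}_k$, and in particular (by repeating the $\mathcal{T}_1,\mathcal{T}_2,\mathcal{T}_3$ decomposition of Lemma~\ref{lem:BoundI}, now with the uniformly bounded kernel $H_k$ in place of $G_k$) bounded on $L^2$, which justifies $\mathfrak{J}_k[f]\in H^1$ when $f\in H^1$. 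One should also check the boundary terms: since $\omega_k$ and hence $f$ of interest vanishes at $y=\pm 1$, or more simply since $G_k(\pm 1, y')=0$, the integration-by-parts boundary contributions at $y'=\pm 1$ vanish identically.

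The main obstacle is the careful bookkeeping of the diagonal: one must verify that the $\eps\to 0$ limits of the two families of boundary terms (those generated by differentiating the truncated limits of integration in $\partial_y\mathfrak{J}_{k,\eps}[f]$, and those generated by the integration by parts in $\mathfrak{J}_{k,\eps}[\partial_y f]$) genuinely cancel, rather than leaving a residual multiple of $G_k(y,y)f(y)$; this requires that $G_k(y,y')$ is $C^1$ off the diagonal with a controlled jump in $\partial_{y'}$ at $y'=y$ of size exactly $1$ (the defining property of the Green's function, $\partial_{y'}G_k(y,y^-) - \partial_{y'}G_k(y,y^+) = 1$), which is what makes the would-be leftover terms match up. Once this cancellation is pinned down and the continuity of $H_k$ across the diagonal is recorded, the rest is the routine $L^2$-boundedness argument already carried out for $\mathfrak{J}_k$. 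I would organize the write-up as: (1) differentiate the $\eps$-truncation; (2) identify and cancel the singular and boundary terms via integration by parts, using the Green's-function jump condition; (3) compute $(\partial_y+\partial_{y'})G_k$ explicitly to get $H_k$ and note its continuity; (4) invoke the Lemma~\ref{lem:BoundI}-type estimate to conclude boundedness and pass to the limit.
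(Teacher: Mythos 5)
Your plan is the same as the paper's in its essentials: regularize $\mathfrak{J}_k$, integrate by parts in $y'$ (using $G_k(y,\pm1)=0$ to kill the endpoint contributions), observe that the $G_k/(y-y')^2$ terms cancel because $\partial_{y'}(y-y')^{-1}=-\partial_y(y-y')^{-1}$, and so identify the commutator kernel with $(\partial_y+\partial_{y'})G_k$, which one computes explicitly from \eqref{G_k} and checks is continuous across the diagonal. The one real difference is the choice of regularization: you use the sharp truncation $\mathfrak{J}_{k,\eps}$, whereas the paper replaces $1/(y-y')$ by the smooth kernel $(y-y')/((y-y')^2+\eps^2)$, which has no $\eps$-dependent limits of integration and therefore produces no boundary terms at $y'=y\pm\eps$ at all. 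Your route works too, but its accounting of those boundary terms is slightly off: the Leibniz terms coming from $\partial_y$ hitting the limits $y\pm\eps$ and the integration-by-parts terms at $y'=y\pm\eps$ are \emph{identical} (both equal $\frac{G_k(y,y-\eps)}{2i\eps}f(y-\eps)+\frac{G_k(y,y+\eps)}{2i\eps}f(y+\eps)$, up to the overall $|k|$), and they cancel in the commutator term-by-term for each fixed $\eps$, not because of a jump condition. In fact the Green's-function jump $\partial_{y'}G_k(y,y^-)-\partial_{y'}G_k(y,y^+)=1$ plays no role here — what matters is rather that the jumps in $\partial_y G_k$ and $\partial_{y'}G_k$ across the diagonal are equal and opposite, so $(\partial_y+\partial_{y'})G_k$ is continuous (the paper records this as the vanishing of the $\eta$-dependent pieces $J(\eta),H(\eta)$ under $\partial_y+\partial_{y'}$). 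So: right answer, same key identity, but the smooth regularization is cleaner precisely because it avoids the boundary-term bookkeeping you would otherwise have to do carefully, and the "jump condition" justification should be dropped.
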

\begin{proof}
Consider the regularizations
\begin{align*}
\frac{1}{|k|}\mathfrak{J}_k^\eps[f](y) := \int_{-1}^1 \frac{G_k(y,y') (y-y')}{(y-y')^2 + \eps^2} f(y') \dee y'. 
\end{align*}
Integrating by parts (and using the boundary conditions), we have
\begin{align*}
\frac{1}{|k|} (\mathfrak{J}_k^\eps[\partial_y f] - \partial_y \mathfrak{J}_k^\eps[f])
= \int_{-1}^1 \frac{(y-y')}{(y-y')^2 + \eps^2}\left(-\partial_{y'}G_k(y,y') - \partial_{y}G_k(y,y')\right) f(y') \dee y'. 
\end{align*}
The right-hand side passes to the limit in $L^2$ to $\mathfrak{H}_k$. If $\partial_y f \in L^2$ then $\mathfrak{J}_k^\eps[\partial_y f]$ passes to the limit in $L^2$ as well.
It follows that $\mathfrak{J}_k^\eps[f] \in H^1$ and $\partial_y \mathfrak{J}_k^\eps[f]$ is given by the above formula. First of all, a direct computation yields
\begin{align*}
&\pa_y G_k(y,y')\\
=&-\frac{1}{\sinh(2k)}\left\{\begin{array}{cc}
\frac{\sinh(2k)}{2}+\underbrace{\frac{1}{4}e^{2k}(e^{k(y-y')}-1)+\frac{1}{4}e^{-2k}(1-e^{-k(y-y')})}_{=:H(y-y')}-\frac{1}{2}\sinh(k(y'+y)),&\quad y\leq y';\\
-\frac{\sinh(2k)}{2}+\underbrace{\frac{1}{4}e^{-2k}(e^{k(y-y')}-1)+\frac{1}{4}e^{2k}(1-e^{-k(y-y')})}_{=:H(y-y')}-\frac{1}{2}\sinh(k(y'+y)),&\quad y\geq y'.\end{array}\right.
\end{align*}
We now observe that we can rewrite the kernel $\pa_y G_k(y,y')$ using the notations $y-y'=\eta,\, y+y'=\zeta$ as follows:
\begin{align}\n
&\pa_y G_k(y,y')\\ \n
=&-\frac{1}{\sinh(2k)}\left(-\frac{1}{2}\sinh(2k){\rm sign}(\eta)+\left(\frac{1}{2}\sinh \big((-2+|\eta|)k\big)+\frac{1}{2}\sinh(2k)\right)\frac{\eta}{|\eta|}-\frac{1}{2}\sinh(k\zeta)\right)\\
=:&J(\eta)+H(\eta)+S(\zeta).\label{JHS} 
\end{align}
Here `$J$' is the jump part, `$H$' is the $C^1$ part and `$S$' is the smooth part. We notice that functions of the $\eta$ variable are annihilated by $- \p_y - \p_{y'}$, and therefore only the $``S"$ term contributes to the commutator. Indeed, we have that
\begin{align}
-\pa_y G_k(y,y')-\pa_{y'}G_k(y,y')=-\frac{\sinh(k(y+y'))}{\sinh(2k)}.
\end{align}
\end{proof}

We have the following commutator estimate on $\mathfrak{H}_k$.
\begin{lemma}\label{lem:H}
There holds the estimate
\begin{align} \label{frakH}
\norm{\mathfrak{H}_k}_{L^2 \to L^2} \lesssim |k|. 
\end{align}
\end{lemma}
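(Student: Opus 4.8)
The plan is to estimate the operator norm of $\mathfrak{H}_k$ by exploiting the structure of its kernel, which by Lemma \ref{lem:CommIpy} is
\[
\mathfrak{H}_k[f](y) = |k| \,\textup{p.v.} \int_{-1}^1 \frac{-1}{2i(y-y')}\cdot\frac{\sinh(k(y+y'))}{\sinh(2k)}\, f(y')\,\dee y'.
\]
The crucial observation is that, unlike the Green's function $G_k$, the numerator $H_k(y,y') = -\sinh(k(y+y'))/\sinh(2k)$ depends only on the \emph{sum} $y+y'$ and is a smooth (indeed real-analytic) function whose absolute value is bounded by a constant uniformly in $k$ and in $y,y' \in [-1,1]$, since $|\sinh(k(y+y'))| \le \sinh(2k)$ for $|y+y'|\le 2$. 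Thus the whole difficulty is that we are multiplying a bounded smooth kernel against the singular factor $1/(y-y')$; we must extract boundedness from a Hilbert-transform-type argument rather than from a Schur test (a naive Schur estimate on $|H_k|/|y-y'|$ diverges logarithmically near the diagonal).

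The key steps I would carry out are as follows. First I would split off the diagonal contribution exactly as in the proof of Lemma \ref{lem:BoundI}: write the kernel as $H_k(y,y') = H_k(y,y) + [H_k(y,y') - H_k(y,y)]$. The first piece gives $|k| H_k(y,y)\cdot \overline{\mathcal T}_2 f$ up to the far-field tails, where $\overline{\mathcal T}_2$ is the truncated-at-scale-$1/k$ operator that differs from the full Hilbert transform by a truncated Hilbert transform and is therefore bounded on $L^2$ uniformly; combined with the outer region handled by a Schur test using $|H_k|\lesssim 1$ and $\int_{|y-y'|>1/k}|y-y'|^{-1}\dee y' \lesssim \log k$ — wait, this still diverges, so instead I should not split at $1/k$ but observe that $H_k(y,y')$, being a function of $y+y'$ only, is annihilated to first order by $\partial_y + \partial_{y'}$ in a way that lets us integrate by parts. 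The cleaner route: since $\partial_{y'}[H_k(y,y')] = -\partial_y[H_k(y,y')]$ (both equal $-k\cosh(k(y+y'))/\sinh(2k)$), and $|\partial_y H_k| \lesssim |k|$, the difference quotient satisfies $|H_k(y,y') - H_k(y,y)| \lesssim |k|\,|y-y'|$ uniformly on $[-1,1]^2$, so that $|k|\cdot\frac{|H_k(y,y')-H_k(y,y)|}{|y-y'|} \lesssim k^2$ pointwise on a set of measure $O(1)$, giving an $L^\infty_y L^1_{y'} + L^\infty_{y'}L^1_y$ bound of $O(k^2)$? That is too lossy by a factor $k$. So I would instead keep the full difference kernel without subtracting the diagonal and argue directly.

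Reconsidering, the most efficient approach is: factor the kernel using the angle-addition identity $\sinh(k(y+y')) = \sinh(ky)\cosh(ky') + \cosh(ky)\sinh(ky')$, so that
\[
\mathfrak{H}_k[f](y) = \frac{-|k|}{\sinh(2k)}\Big( \sinh(ky)\,\textup{p.v.}\!\int \frac{\cosh(ky')f(y')}{2i(y-y')}\dee y' + \cosh(ky)\,\textup{p.v.}\!\int \frac{\sinh(ky')f(y')}{2i(y-y')}\dee y'\Big).
\]
Each principal-value integral is a Hilbert transform on $[-1,1]$ applied to $\cosh(ky')f$ or $\sinh(ky')f$, hence bounded on $L^2$ with a universal constant; pulling out the prefactors and using $\|\sinh(ky)\|_{L^\infty_{y\in[-1,1]}}, \|\cosh(ky)\|_{L^\infty} \le \cosh(2k) \lesssim e^{2k}$ together with $\|\cosh(ky')f\|_{L^2} \lesssim e^{2k}\|f\|_{L^2}$ and $1/\sinh(2k) \lesssim e^{-2k}$, I arrive at the bound $|k|\cdot e^{-2k}\cdot e^{2k}\cdot e^{2k}\|f\|_{L^2}$, which is again exponentially too big. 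The cure is to renormalize the exponentials: set $c_k(y) = e^{-k}\cosh(ky)$ and $s_k(y) = e^{-k}\sinh(ky)$, both bounded by $1$ uniformly on $[-1,1]$ for $k>0$, and write $\sinh(k(y+y'))/\sinh(2k) = \big(c_k(y)s_k(y') + s_k(y)c_k(y')\big)\cdot \frac{e^{2k}}{\sinh(2k)}$ with $e^{2k}/\sinh(2k) \le C$ uniformly. Then
\[
\mathfrak{H}_k[f](y) = \frac{-|k|\,e^{2k}}{\sinh(2k)}\Big( c_k(y)\,\mathcal H[s_k f](y) + s_k(y)\,\mathcal H[c_k f](y)\Big),
\]
and $\|\mathfrak{H}_k f\|_{L^2} \lesssim |k|\big( \|c_k\|_\infty \|\mathcal H\|_{L^2\to L^2}\|s_k f\|_{L^2} + \|s_k\|_\infty\|\mathcal H\|_{L^2\to L^2}\|c_k f\|_{L^2}\big) \lesssim |k|\,\|f\|_{L^2}$, which is exactly \eqref{frakH}.

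The main obstacle is precisely this bookkeeping of the exponential growth: both the prefactor $\sinh(k(y+y'))$ and the normalization $1/\sinh(2k)$ are exponentially large/small in $k$, and a careless split loses powers of $e^{k}$; the whole point is to distribute the $e^{\pm k}$ factors so that each surviving piece — $c_k, s_k$, and the constant $e^{2k}/\sinh(2k)$ — is bounded uniformly in $k$, after which the only unbounded factor is the explicit $|k|$, matching the claimed estimate. I would present it via the angle-addition factorization so that the reduction to the (uniformly bounded) finite Hilbert transform on $[-1,1]$ is transparent; an alternative, should one prefer to avoid invoking boundedness of $\mathcal H$ on the interval directly, is to mimic the three-term decomposition $\mathcal T_1 + \mathcal T_2 + \mathcal T_3$ from the proof of Lemma \ref{lem:BoundI} with $G_k$ replaced by $H_k$, using $\|H_k\|_{L^\infty} \lesssim 1$ and $|\partial H_k| \lesssim |k|$ for the off-diagonal Schur bounds and the truncated Hilbert transform bound for the diagonal term, tracking that the gain is now only $|k|^{-1}$ worse than in Lemma \ref{lem:BoundI} because $H_k$ does not enjoy the $y'$-decay that $G_k$ does. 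Either way the result follows.
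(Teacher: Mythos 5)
Your final argument is correct and takes a genuinely different route from the paper's. The paper's proof of Lemma~\ref{lem:H} simply re-runs the three-term decomposition of Lemma~\ref{lem:BoundI} — far-field piece $\mathcal{T}_1$ via a Schur test that exploits the precise exponential form of $H_k$, diagonal constant $\mathcal{T}_2$ via the (truncated) Hilbert transform, near-diagonal remainder $\mathcal{T}_3$ via a Schur test on $|y-y'|<1/k$ using the Lipschitz bound $|\partial H_k|\lesssim|k|$ — whereas you observe that, unlike $G_k$, the kernel $H_k(y,y')=-\sinh(k(y+y'))/\sinh(2k)$ separates variables via the addition formula, so after the renormalization $c_k=e^{-k}\cosh(k\cdot)$, $s_k=e^{-k}\sinh(k\cdot)$ (which are bounded by $1$ on $[-1,1]$ uniformly in $k$, with $e^{2k}/\sinh(2k)\lesssim1$ for $|k|\ge1$), the whole operator collapses to two applications of the finite Hilbert transform on $(-1,1)$ sandwiched between bounded multipliers. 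This is shorter and avoids any near/far splitting, at the price of being specific to the algebraic structure of $H_k$; the paper's template is reusable verbatim for $G_k$, which does not separate (it is built from $\sinh(k(1-y_>))\sinh(k(1+y_<))$ and has a kink at $y=y'$), so the authors evidently preferred one uniform method for both lemmas.

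One caveat about the discarded intermediate reasoning: your dismissal of the $\mathcal{T}_1+\mathcal{T}_2+\mathcal{T}_3$ route rests on miscalculations, and the paper's proof in fact works. For $\mathcal{T}_1$ the Schur integrand is not merely bounded by $|y-y'|^{-1}$; one must use the explicit exponential decay $\sinh(k(y+y'))/\sinh(2k)\lesssim e^{-k(2-y-y')}$, and then the substitution $u=k(y'-y)$ gives $e^{-2k(1-y)}\int_1^{k(1-y)}e^u/u\,du\lesssim e^{-k(1-y)}/(k(1-y))\lesssim1$, with no $\log k$ loss. For $\mathcal{T}_3$ the Schur integral is taken only over $|y-y'|<1/k$, so $\int_{|y-y'|<1/k}\frac{|H_k(y,y')-H_k(y,y)|}{|y-y'|}\,dy'\lesssim|k|\cdot\frac{1}{k}\lesssim1$; your ``set of measure $O(1)$'' is the wrong domain, which is what produced the spurious extra factor of $|k|$. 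Neither point affects your final proof, but if you keep this lemma you should delete the dead ends and present only the angle-addition argument.
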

\begin{proof} We decompose in a similar manner to Lemma \ref{lem:BoundI}, as follows: 
\begin{align*}
\frac{1}{|k|}\mathfrak{H}_{k}f  = &  \textup{p.v.}  \int_{-1}^1 H_k(y,y') \frac{f(y')}{2i(y-y')} dy' \\   
= & \frac{k}{\abs{k}} \textup{p.v.}  \int_{[-1,1] - (y - \frac{1}{k}, y + \frac{1}{k})} H_k(y,y') \frac{f(y')}{2i(y-y')} dy' +  \frac{k}{\abs{k}} \textup{p.v.}  \int_{y - \frac{1}{k}}^{y + \frac{1}{k}} H_k(y,y') \frac{f(y')}{2i(y-y')} dy' \\
 = &\frac{k}{\abs{k}} \textup{p.v.}  \int_{[-1,1] - (y - \frac{1}{k}, y + \frac{1}{k})} H_k(y,y') \frac{f(y')}{2i(y-y')} dy'  +  \frac{k}{\abs{k}} H_k(y,y) \textup{p.v.}  \int_{y - \frac{1}{k}}^{y + \frac{1}{k}} \frac{f(y')}{2i(y-y')} dy' \\
 & +   \frac{k}{\abs{k}}  \textup{p.v.} \int_{y - \frac{1}{k}}^{y + \frac{1}{k}} [H_k(y,y') - H_k(y,y)]  \frac{f(y')}{2i(y-y')} dy' \\
 = & \mathcal{T}_1 + \mathcal{T}_2 + \mathcal{T}_3. 
\end{align*}
We will now provide bounds on $\mathcal{T}_1, \mathcal{T}_2, \mathcal{T}_3$ successively. 

\vspace{2 mm}

\noindent \textit{Bounds on $\mathcal{T}_1$:} We again split into $\mathcal{T}_1 = \mathcal{T}_{1,\ge} + \mathcal{T}_{1,\le}$, where we define 
\begin{align}
\mathcal{T}_{1,\ge} f := & \frac{k}{\abs{k}} \textup{p.v.}  \int_{y' = y + \frac{1}{k}}^1 H_k(y,y') \frac{f(y')}{2i(y-y')} dy', \\
\mathcal{T}_{1,\le} f := & \frac{k}{\abs{k}} \textup{p.v.}  \int_{y' = -1}^{y - \frac{1}{k}} H_k(y,y') \frac{f(y')}{2i(y-y')} dy'.
\end{align}
By symmetry it suffices to estimate $\mathcal{T}_{1,\ge}$. For this, we have the kernel 
\begin{align}
\bold{K}(y, y') := \mathbbm{1}(y' \ge y + \frac1k) H_k(y,y') \frac{1}{2i(y' - y)}.
\end{align}
We will first fix a $y$ and compute the $L^1_{y'}$ norm. Indeed, we have 
\begin{align}\n
\| \bold{K}(y, \cdot) \|_{L^1_{y'}} \lesssim & \int_{y' = y + \frac1k}^1 \frac{\sinh(k(y+y'))}{(y' - y) \sinh(2k)} dy' \lesssim \int_{y' = y + \frac1k}^1 \frac{e^{k(y+y')}}{(y' - y) e^{2k}} dy' \\
\lesssim & e^{-2k(1-y)} \int_1^{k(1-y)} \frac{e^u}{u} du \lesssim 1. 
\end{align}
Taking now the supremum in $y$, we obtain $\| \bold{K} \|_{L^\infty_y L^1_{y'}}  \lesssim 1$. We now fix a $y'$ in the range $1/k \le y' \le 1$. Then we compute 
\begin{align} \n
\| \bold{K}(\cdot, y') \|_{L^1_y} \lesssim & \int_{y = 0}^{y' - \frac{1}{k}} \frac{\sinh(k(y+y'))}{(y' - y) \sinh(2k)} dy \lesssim \int_{y = 0}^{y' - \frac{1}{k}} \frac{e^{k(y+y')}}{(y' - y) e^{2k}} dy \\
\lesssim & e^{-2k(1-y')} \int_1^{y' k} \frac{e^{-u}}{u} du \lesssim e^{2k(1-y')}. 
\end{align}
Taking now the supremum in $y'$ over the range $1/k \le y' \le 1$, we conclude $\| \bold{K} \|_{L^\infty_{y'} L^1_{y}}  \lesssim 1$. By applying the analogous argument also to $\mathcal{T}_{1,\le}$, we have 
\begin{align} \label{T1bd:hk}
\| \mathcal{T}_{1} f \|_{L^2} \lesssim \| f \|_{L^2}.
\end{align}

\vspace{2 mm}

\noindent \textit{Bounds on $\mathcal{T}_2$:} For the operator $\mathcal{T}_2$, we have 
\begin{align} \n
\| \mathcal{T}_2f\|_{L^2} = & \| \frac{k}{\abs{k}} H_k(y,y) \textup{p.v.}  \int_{y - \frac{1}{k}}^{y + \frac{1}{k}} \frac{f(y')}{2i(y-y')} dy' \|_{L^2} \lesssim \| H_k(y, y) \|_{L^\infty_{y}} \|  \textup{p.v.}  \int_{y - \frac{1}{k}}^{y + \frac{1}{k}} \frac{f(y')}{2i(y-y')} dy' \|_{L^2} \\ \label{htrn:1}
\lesssim & \| \textup{p.v.}  \int_{y - \frac{1}{k}}^{y + \frac{1}{k}} \frac{f(y')}{2i(y-y')} dy' \|_{L^2}.
\end{align}
Above, we have used the bound 
\begin{align*}
\| H_k(y,y) \|_{L^\infty_y}\lesssim \| \frac{\sinh(2ky)}{\sinh(2k)} \|_{L^\infty_y} \lesssim 1. 
\end{align*}
From here, the desired bound follows from the corresponding boundedness of the truncated Hilbert transform, just as in Lemma \ref{lem:BoundI}.
\vspace{2 mm}

\noindent \textit{Bounds on $\mathcal{T}_3$:}  For the operator $\mathcal{T}_3$, we use Schur's Test again. In this case, we have 
\begin{align*}
\int_{y - \frac{1}{k}}^{y + \frac{1}{k}} |k| \frac{|H_k(y, y') - H_k(y,y)|}{k|y - y'|} dy' \lesssim \sup_{y, y'} \| \frac{H_k(y,y') - H_k(y,y)}{k|y - y'|} \Big| \int_{y - \frac{1}{k}}^{y + \frac1k}  |k| dy' \lesssim 1, \\
\int_{y' - \frac{1}{k}}^{y' + \frac{1}{k}}  |k| \frac{|H_k(y, y') - H_k(y,y)|}{k|y - y'|} dy \lesssim \sup_{y, y'} \| \frac{H_k(y,y') - H_k(y,y)}{k|y - y'|} \Big| \int_{y' - \frac{1}{k}}^{y' + \frac1k}  |k| dy \lesssim 1. 
\end{align*}  
Therefore, again by applying Schur's test, we have 
\begin{align} \label{T3bd}
\| \mathcal{T}_3f\|_{L^2}  \lesssim &  \| f \|_{L^2}.
\end{align}
Bringing the bounds on $\mathcal{T}_1, \mathcal{T}_2, \mathcal{T}_3$ together finishes the proof of the lemma. 
\end{proof}

Finally we point out the following symmetry properties. 
\begin{lemma} \label{lem:AS}
For all $f,g \in L^2$ there holds 
\begin{align*}
\overline{\mathfrak{J}_k[f]} = - \mathfrak{J}_k[\overline{f}]
\end{align*}
and
\begin{align}
\int_{-1}^1 \bar{f} \mathfrak{J}_k[g] dy = - \int_{-1}^1 \mathfrak{J}_k[\bar{f}] g  dy, 
\end{align}
which in particular, implies $\mathfrak{J}_k = \mathfrak{J}^\ast_k$. 
\end{lemma}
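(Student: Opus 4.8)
The plan is to carry out all manipulations at the level of the truncated operators $\mathfrak{J}_{k,\eps}$, where every integral is absolutely convergent, and then pass to the limit $\eps\to 0$ in $L^2$ using Lemma~\ref{lem:BoundI} and the remark following it. \emph{Conjugation identity.} For fixed $y$ and $\eps>0$ the kernel $K_\eps^{(k)}(y,y'):=\abs{k}\tfrac{k}{\abs{k}}\tfrac{G_k(y,y')}{2i(y-y')}\mathbbm{1}_{\abs{y-y'}\ge\eps}$ is bounded on $[-1,1]^2$ (here $|k|$ is fixed, $G_k$ is continuous by \eqref{G_k}, and $\abs{y-y'}\ge\eps$), hence $\mathfrak{J}_{k,\eps}[f](y)=\int_{-1}^1 K_\eps^{(k)}(y,y')f(y')\dee y'$ is an absolutely convergent integral for $f\in L^2$. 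Since $k\in\Z$ and $G_k$ is real-valued, the denominator $2i$ makes $K_\eps^{(k)}$ purely imaginary, so $\overline{K_\eps^{(k)}}=-K_\eps^{(k)}$ and therefore $\overline{\mathfrak{J}_{k,\eps}[f](y)}=-\mathfrak{J}_{k,\eps}[\bar f](y)$ pointwise; letting $\eps\to0$ in $L^2$ gives $\overline{\mathfrak{J}_k[f]}=-\mathfrak{J}_k[\bar f]$.

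\emph{Bilinear identity.} Fix $\eps>0$. Since $K_\eps^{(k)}$ is bounded on the finite-measure set $[-1,1]^2$ and $L^2([-1,1])\subset L^1([-1,1])$, the double integral $\iint\big|K_\eps^{(k)}(y,y')\,\bar f(y)\,g(y')\big|\,\dee y'\,\dee y$ is finite for $f,g\in L^2$, so Fubini gives
\[
\int_{-1}^1\bar f(y)\,\mathfrak{J}_{k,\eps}[g](y)\,\dee y=\iint_{\abs{y-y'}\ge\eps}K_\eps^{(k)}(y,y')\,\bar f(y)\,g(y')\,\dee y'\,\dee y.
\]
The region $\{\abs{y-y'}\ge\eps\}\cap[-1,1]^2$ is symmetric under $y\leftrightarrow y'$, \eqref{G_k} gives $G_k(y',y)=G_k(y,y')$, and $\tfrac{1}{y'-y}=-\tfrac{1}{y-y'}$; hence relabelling $y\leftrightarrow y'$ flips the overall sign, which produces
\[
\int_{-1}^1\bar f(y)\,\mathfrak{J}_{k,\eps}[g](y)\,\dee y=-\int_{-1}^1 g(y)\,\mathfrak{J}_{k,\eps}[\bar f](y)\,\dee y.
\]
Passing to the limit $\eps\to0$ (using $\mathfrak{J}_{k,\eps}[g]\to\mathfrak{J}_k[g]$ and $\mathfrak{J}_{k,\eps}[\bar f]\to\mathfrak{J}_k[\bar f]$ in $L^2$ against the fixed $L^2$ factors) yields $\int_{-1}^1\bar f\,\mathfrak{J}_k[g]\,\dee y=-\int_{-1}^1\mathfrak{J}_k[\bar f]\,g\,\dee y$.

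\emph{Self-adjointness.} Combining the two, for $f,g\in L^2$ we get
\[
\langle f,\mathfrak{J}_k[g]\rangle=\int_{-1}^1\bar f\,\mathfrak{J}_k[g]\,\dee y=-\int_{-1}^1\mathfrak{J}_k[\bar f]\,g\,\dee y=\int_{-1}^1\overline{\mathfrak{J}_k[f]}\,g\,\dee y=\langle\mathfrak{J}_k[f],g\rangle,
\]
where the third equality uses the conjugation identity $\mathfrak{J}_k[\bar f]=-\overline{\mathfrak{J}_k[f]}$; hence $\mathfrak{J}_k=\mathfrak{J}_k^\ast$. The only non-routine point is the interchange of the spatial integrations in the bilinear identity, which is exactly why I work with the truncated operators $\mathfrak{J}_{k,\eps}$ before taking limits; once that is granted, the identities reduce to the symmetry of the Green's function together with the oddness of the Hilbert kernel $\tfrac{1}{y-y'}$ and the reality of $k$ and $G_k$.
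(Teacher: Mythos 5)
Your proof is correct and follows essentially the same route as the paper: you pass to the truncated operators $\mathfrak{J}_{k,\eps}$, apply Fubini together with the symmetry $G_k(y,y')=G_k(y',y)$ and the oddness of $\tfrac{1}{y-y'}$ to swap $y\leftrightarrow y'$, and then send $\eps\to 0$ in $L^2$ via Lemma~\ref{lem:BoundI}. The only difference is that you spell out the conjugation identity (which the paper dismisses as ``by definition'') and the justification of Fubini and the limit passage slightly more explicitly.
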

\begin{proof}
The first identity follows by definition. 
For the latter, we have 
\begin{align*}
\int_{-1}^1 \bar{f} \siming{\mathfrak{J}_k[g]} dy = & \int_{-1}^1 \bar{f} \lim_{\eps \rightarrow 0} \siming{\mathfrak{J}_k^{\eps}}[g] dy = \lim_{\eps \rightarrow 0} \int_{-1}^1 \bar{f} \siming{\mathfrak{J}_k^\eps}[g] dy \\
= & \lim_{\eps \rightarrow 0} \int_{-1}^1 \bar{f}(y) \Big( \int_{|y' - y| \ge \eps} \frac{G_k(y,y')}{2i(y - y')} g(y') dy' \Big) dy \\
= & \lim_{\eps \rightarrow 0}  \int_{-1}^1  g(y') \Big( \int_{|y - y'| \ge \eps}  \frac{G_k(y,y')}{2i(y - y')} \bar{f}(y) dy\Big) dy' \\
= & -  \lim_{\eps \rightarrow 0} \int_{-1}^1  g(y') \Big( \int_{|y - y'| \ge \eps}  \frac{G_k(y', y)}{2i(y'- y)} \bar{f}(y) dy\Big) dy' \\
= & - \int g \siming{\mathfrak{J}_k}[\bar{f}] dy', 
\end{align*}
which is the desired result. 
\end{proof}

\section{The Linearized Problem}  \label{sec:Lin}
In this section we prove Proposition \ref{prop:lin}, which specifically concerns the linearized problem \eqref{NSchannel}. 
To this end, we begin by computing the time derivative of the energy \eqref{def:Ek}
\begin{align}\n
\frac{1}{2}\frac{d}{dt}E_k[\omega_k] = & \frac{1}{2} \left(\frac{d}{dt}\| \omega_{k} \|_{L^2}^2   + c_{\alpha} \nu^{\frac23} |k|^{-\frac23}  \frac{d}{dt}\| \p_y \omega_{k} \|_{L^2}^2 + c_{\beta}{\nu^{\frac{1}{3}}} |k|^{-\frac43}  \frac{d}{dt} \Re \langle ik \omega_{k}, \p_y \omega_{k} \rangle \right. \\ \n
&\left. + c_{\tau}  \frac{d}{dt} \Re \langle \omega_k, \mathfrak{J}_k[\omega_k] \rangle + c_{\tau}c_\alpha \nu^{\frac{2}{3}} \abs{k}^{-\frac{2}{3}}  \frac{d}{dt} \Re \langle \partial_y \omega_k, \mathfrak{J}_k[\partial_y \omega_k] \rangle \right) \\ 
 =: &\bb T_\gamma + \bb T_\alpha + \bb T_\beta + \bb T_{\tau} + \bb T_{\tau \alpha}.\label{T_all}
\end{align}
Each term is confronted in the following individual subsections.
Since this entire section is $k$-by-$k$, we omit the $k$'s whenever it is clear from context. Without loss of generality, we assume $k> 0$ in this section.

\subsection{Estimate of the $\mathbb{T}_\gamma$ Terms}
The basic $L^2$ energy estimate is almost immediate. 
\begin{lemma}[$\bb T_\gamma$ Estimate] \label{lem:LinGamma}
Under the hypotheses of Proposition \ref{prop:lin}, we have the following 
\begin{align}
\frac{1}{2} \frac{d}{dt} \| \omega_k \|_{L^2}^2 + \nu \| \grad_k \omega_k \|_{L^2}^2 \lesssim \delta_0 \abs{k}^{\mh{\delta}-1}  \mathrm{D}_{k,\tau}.%
\end{align}
Here the parameter $\delta_0$ is defined in \eqref{asmp} and the diffusion term $\mathrm{D}_{k,\tau}$ is defined in \eqref{def:Dks}. The implicit constant depends on $\|U \|_{C^3}.$
\end{lemma}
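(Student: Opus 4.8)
The plan is to run the standard $L^2$ energy identity for \eqref{NSchannel} and identify the one term that requires work, namely the non-local term $-U''ik\phi_k$. First I would pair equation \eqref{NSchannel} (for fixed $k\neq 0$, and WLOG $k>0$) with $\bar\omega_k$ in $L^2(-1,1)$ and take real parts; for $H^1$ solutions this is justified by the usual parabolic duality argument. The transport term drops because $U$ is real: $\Re\langle Uik\omega_k,\omega_k\rangle=\Re\big(ik\int_{-1}^1 U|\omega_k|^2\,dy\big)=0$. The viscous term, after integrating by parts in $y$ and using $\omega_k(\pm1)=0$, gives $-\nu\,\Re\langle\Delta_k\omega_k,\omega_k\rangle=\nu\norm{\grad_k\omega_k}_{L^2}^2$ with $\grad_k=(ik,\partial_y)$. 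This leaves the exact identity
\begin{align*}
\frac{1}{2}\frac{d}{dt}\norm{\omega_k}_{L^2}^2+\nu\norm{\grad_k\omega_k}_{L^2}^2=\Re\langle U''ik\phi_k,\omega_k\rangle,
\end{align*}
and it remains to bound the right-hand side by $\delta_0|k|^{-1}\mathrm{D}_{k,\tau}=\delta_0|k|\norm{\grad_k\phi_k}_{L^2}^2$.

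The key step is to trade $\omega_k$ for $\phi_k$ via $\omega_k=\Delta_k\phi_k=-k^2\phi_k+\partial_{yy}\phi_k$ and then integrate by parts to expose the cancellations produced by taking $\Re$. The $-k^2\phi_k$ contribution equals $-ik^3\int_{-1}^1 U''|\phi_k|^2\,dy$, which is purely imaginary and hence killed by $\Re$. For the $\partial_{yy}\phi_k$ contribution, one integration by parts in $y$ (the boundary term vanishing since $\phi_k(\pm1)=0$) produces $-ik\int_{-1}^1\big(U'''\phi_k+U''\partial_y\phi_k\big)\partial_y\bar\phi_k\,dy$, of which the piece $-ik\int_{-1}^1 U''|\partial_y\phi_k|^2\,dy$ is again purely imaginary and drops. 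Hence
\begin{align*}
\Re\langle U''ik\phi_k,\omega_k\rangle=-\Re\Big(ik\int_{-1}^1 U'''\,\phi_k\,\partial_y\bar\phi_k\,dy\Big)\le|k|\,\norm{U'''}_{L^\infty}\,\norm{\phi_k}_{L^2}\,\norm{\partial_y\phi_k}_{L^2}.
\end{align*}
Alternatively one may skip this last cancellation and keep both terms, at the harmless cost of an extra $|k|\,\norm{U''}_{L^\infty}\norm{\partial_y\phi_k}_{L^2}^2$ of the same size.

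To close, I would combine three elementary facts. (i) The Poincar\'e-type bound $\norm{\phi_k}_{L^2}\le|k|^{-1}\norm{\grad_k\phi_k}_{L^2}$, immediate from $k^2\norm{\phi_k}_{L^2}^2\le\norm{\grad_k\phi_k}_{L^2}^2$, together with $\norm{\partial_y\phi_k}_{L^2}\le\norm{\grad_k\phi_k}_{L^2}$, turns the previous display into $\norm{U'''}_{L^\infty}\norm{\grad_k\phi_k}_{L^2}^2$. (ii) Since $W(t)$ is the Dirichlet heat extension of $W_{in}$, a standard heat-semigroup estimate together with the assumptions on $W_{in}$ (including the compatibility condition) gives $\norm{W}_{L^\infty_t H^4_y}\lesssim\norm{W_{in}}_{H^4_y}\lesssim\delta_0$; and since $U-y=\partial_y\Psi$ with $\partial_{yy}\Psi=\pm W$, $\Psi(\pm1)=0$, one has $U''=\pm\partial_yW$ and $U'''=\pm\partial_{yy}W$, so $\norm{U''}_{L^\infty}+\norm{U'''}_{L^\infty}\lesssim\norm{W}_{H^4_y}\lesssim\delta_0$; the residual implicit constant is universal since $\norm{U}_{C^3}\lesssim1$. (iii) Since $|k|\ge1$, $\norm{\grad_k\phi_k}_{L^2}^2\le|k|\,\norm{\grad_k\phi_k}_{L^2}^2=|k|^{-1}\mathrm{D}_{k,\tau}$. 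Putting these together gives $\Re\langle U''ik\phi_k,\omega_k\rangle\lesssim\delta_0|k|^{-1}\mathrm{D}_{k,\tau}$, which is the claim.

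The only real subtlety, and hence the reason for the ``key step'' above, is the decision to move $\omega_k$ onto $\Delta_k\phi_k$ rather than estimating $\langle U''ik\phi_k,\omega_k\rangle$ directly. A bare Cauchy--Schwarz bound $|k|\,\norm{U''}_{L^\infty}\norm{\phi_k}_{L^2}\norm{\omega_k}_{L^2}$ forces a $\norm{\omega_k}_{L^2}^2$ term whose coefficient cannot be absorbed into the $O(\nu)$ dissipation $\nu\norm{\grad_k\omega_k}_{L^2}^2\ge\nu k^2\norm{\omega_k}_{L^2}^2$ without an inadmissible $\nu^{-1}$ loss. The integration-by-parts form, by contrast, is controlled purely by $\norm{\grad_k\phi_k}_{L^2}^2$, i.e. by the inviscid-damping dissipation $\mathrm{D}_{k,\tau}$, which is exactly the dissipative quantity that the right-hand side of the lemma makes available, and the smallness is supplied entirely by $\delta_0$ through $\norm{U'''}_{L^\infty}$.
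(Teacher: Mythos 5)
Your proof is correct and follows essentially the same route as the paper's: pair the equation with $\bar\omega_k$, note the transport term and the viscous boundary terms vanish, then substitute $\omega_k=\Delta_k\phi_k$ in the remaining $\Re\,ik\langle U''\phi_k,\omega_k\rangle$ and integrate by parts so everything is controlled by $\|\grad_k\phi_k\|_{L^2}^2$ with small constant $\delta_0$. The one stylistic difference: you observe that both $-k^2\int U''|\phi_k|^2$ and $-\int U''|\partial_y\phi_k|^2$ are purely real and therefore annihilated by $\Re(ik\,\cdot)$, so only the $U'''\phi_k\partial_y\bar\phi_k$ term survives; the paper keeps the (vanishing) $\Re\,ik\langle U''\nabla_k\phi_k,\nabla_k\phi_k\rangle$ term in the triangle inequality and just bounds it by $|k|\|U''\|_\infty\|\nabla_k\phi_k\|_{L^2}^2$, which costs nothing since it lands in the same place. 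Your concluding observation about why the direct Cauchy--Schwarz bound on $\langle U''ik\phi_k,\omega_k\rangle$ fails (it leaves a $\|\omega_k\|_{L^2}^2$ term that the $O(\nu)$ dissipation cannot absorb) is exactly the right reason the integration by parts is not optional. One tiny remark: $U''=\partial_y W$ and $U'''=\partial_{yy}W$ without the $\pm$ you wrote, since $U=y+\partial_y\Psi$ with $\partial_{yy}\Psi=W$; this is immaterial for the bound.
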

\begin{proof}
By direct calculation we have  
\begin{align*}
\frac{1}{2} \frac{d}{dt}\| \omega_k \|_{L^2}^2 + \nu \| \grad_k \omega_k \|_{L^2}^2 =  \Re\ ik \langle U'' \phi_k, \omega_k \rangle . 
\end{align*}
The latter term is estimated as follows using integration by parts, Cauchy-Schwarz, and the smallness assumption \eqref{asmp}, 
\begin{align*}
| \Re\ ik \langle U'' \phi_k,  \omega_k \rangle| = & | \Re\ ik \langle U'' \phi_k,  \Delta_k \phi_k \rangle|  
\lesssim     | \Re\ ik \langle U'' \nabla_k \phi_k,  \nabla_k \phi_k \rangle| +  | \Re\ ik \langle U''' \phi_k,  \grad_k \phi_k \rangle| \\
 \lesssim&  \| U' \|_{C^2} |k| \| \nabla_k \phi_k \|_{L^2}^2\siming{ \lesssim \|W\|_{H^4}|k|^{-1}\mathrm{D}_{k,\tau}} 
 \lesssim  \delta_0 |k|^{-1} \mathrm{D}_{k,\tau}, 
\end{align*}
which completes the proof. 
\end{proof}

\subsection{Estimate of the $\bb T_\alpha$ Terms}
The following is also a relatively straightforward calculation.  
\begin{lemma}[$\bb T_\alpha$ Estimate] \label{lem:LinAlpha}
Under the hypotheses of Proposition \ref{prop:lin}, we have the following 
\begin{align}
\frac{1}{2}\frac{d}{dt} \nu^{2/3}|k|^{-2/3} \norm{\partial_y \omega_k}_{L^2}^2 + \nu (\nu^{2/3}|k|^{-2/3}) \norm{\grad_k \partial_y \omega_k}_{L^2}^2 \lesssim \delta_0 \mathrm{D}_{k,\gamma} + \delta_0 \nu^{1/3} |k|^{2/3} \norm{\omega_k}_{L^2}^2. 
\end{align}
Here $\delta_0,\, \mathrm{D}_{k,\gamma}$ are defined in \eqref{asmp} and \eqref{def:Dks}. 
The implicit constant depends on $\|U \|_{ C^3}$.
\end{lemma}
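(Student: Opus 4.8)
\emph{Proof sketch.}
The plan is to differentiate the vorticity equation in \eqref{NSchannel} with respect to $y$ and run a weighted $L^2$ estimate on $\partial_y\omega_k$. Since $\partial_y$ commutes with $\partial_t$ and with $\Delta_k$, while $\partial_y(Uik\omega_k)=Uik\,\partial_y\omega_k+U'ik\,\omega_k$ and $\partial_y(U''ik\phi_k)=U''ik\,\partial_y\phi_k+U'''ik\,\phi_k$, one obtains
\begin{align*}
\partial_t\partial_y\omega_k+Uik\,\partial_y\omega_k+U'ik\,\omega_k-U''ik\,\partial_y\phi_k-U'''ik\,\phi_k-\nu\Delta_k\partial_y\omega_k=0.
\end{align*}
Before testing this against $\partial_y\omega_k$ I would first record the boundary information that makes the estimate close: differentiating the identity $\omega_k(t,\pm1)\equiv0$ in $t$ and evaluating the vorticity equation in \eqref{NSchannel} at $y=\pm1$ (where $\omega_k=\phi_k=0$) forces $\partial_{yy}\omega_k(t,\pm1)=0$. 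This is the only genuinely structural input; it is what lets the second-order dissipation integrate by parts cleanly for $\partial_y\omega_k$, even though $\partial_y\omega_k$ itself does not vanish on the walls.

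Pairing the differentiated equation with $\nu^{2/3}\abs{k}^{-2/3}\partial_y\omega_k$ and taking real parts, the transport term drops because $U$ is real, $\Re\langle Uik\,\partial_y\omega_k,\partial_y\omega_k\rangle=0$, and the dissipation contributes $-\nu^{5/3}\abs{k}^{-2/3}\norm{\grad_k\partial_y\omega_k}_{L^2}^2$ with no boundary term, thanks to $\partial_{yy}\omega_k|_{\pm1}=0$. This yields
\begin{align*}
&\tfrac12\tfrac{d}{dt}\,\nu^{2/3}\abs{k}^{-2/3}\norm{\partial_y\omega_k}_{L^2}^2+\nu^{5/3}\abs{k}^{-2/3}\norm{\grad_k\partial_y\omega_k}_{L^2}^2\\
&\qquad=\nu^{2/3}\abs{k}^{-2/3}\Re\Big(-\langle U'ik\omega_k,\partial_y\omega_k\rangle+\langle U''ik\,\partial_y\phi_k,\partial_y\omega_k\rangle+\langle U'''ik\,\phi_k,\partial_y\omega_k\rangle\Big),
\end{align*}
and it remains to absorb the three terms on the right into the dissipation on the left together with $\delta_0\mathrm{D}_{k,\gamma}$ and $\delta_0\nu^{1/3}\abs{k}^{2/3}\norm{\omega_k}_{L^2}^2$.

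The two $\phi$-terms are routine. Since $U''=\partial_yW$ and $U'''=\partial_{yy}W$ are derivatives of the small profile $W$, they are controlled by $\norm{W}_{H^4}\lesssim\delta_0$ (Sobolev embedding), and the elliptic bounds $\norm{\grad_k\phi_k}_{L^2}\lesssim\abs{k}^{-1}\norm{\omega_k}_{L^2}$ and $\norm{\phi_k}_{L^2}\lesssim\abs{k}^{-2}\norm{\omega_k}_{L^2}$ for $\Delta_k\phi_k=\omega_k$ let Cauchy--Schwarz plus Young's inequality bound each by $\delta_0\big(\nu\norm{\partial_y\omega_k}_{L^2}^2+\nu^{1/3}\abs{k}^{2/3}\norm{\omega_k}_{L^2}^2\big)\lesssim\delta_0\mathrm{D}_{k,\gamma}+\delta_0\nu^{1/3}\abs{k}^{2/3}\norm{\omega_k}_{L^2}^2$. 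The delicate term --- and the main obstacle --- is the commutator $\Re\langle U'ik\omega_k,\partial_y\omega_k\rangle$: here $U'=1+W$, so $\norm{U'}_{L^\infty}\lesssim1$ is \emph{not} small, and Cauchy--Schwarz alone gives only $\nu^{2/3}\abs{k}^{1/3}\norm{\omega_k}_{L^2}\norm{\partial_y\omega_k}_{L^2}$, which a direct Young split cannot absorb. I would instead interpolate against the dissipation: because $\omega_k(\pm1)=0$, the one-dimensional inequality $\norm{\partial_y\omega_k}_{L^2}^2=-\langle\omega_k,\partial_{yy}\omega_k\rangle\le\norm{\omega_k}_{L^2}\norm{\partial_{yy}\omega_k}_{L^2}$ holds, hence $\norm{\partial_y\omega_k}_{L^2}\le\norm{\omega_k}_{L^2}^{1/2}\norm{\grad_k\partial_y\omega_k}_{L^2}^{1/2}$; substituting this and applying Young's inequality with exponents $(4/3,4)$ absorbs a small multiple of $\nu^{5/3}\abs{k}^{-2/3}\norm{\grad_k\partial_y\omega_k}_{L^2}^2$ back into the left-hand side and leaves an error $\lesssim\nu^{1/3}\abs{k}^{2/3}\norm{\omega_k}_{L^2}^2$ of the claimed form (the $W$-part of $U'$ being, by contrast, directly $\delta_0$-small). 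Collecting these bounds and absorbing the dissipative pieces completes the proof; essentially all the work is in this single interpolation step.
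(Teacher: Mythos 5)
Your plan is structurally identical to the paper's: differentiate \eqref{NSchannel} in $y$, observe from the equation at $y=\pm1$ (where $\omega_k=\phi_k=0$) that $\partial_{yy}\omega_k(\pm1)=0$, pair against $\nu^{2/3}\abs{k}^{-2/3}\partial_y\omega_k$, and integrate the dissipation by parts. The boundary argument and the treatment of the $U''ik\partial_y\phi_k$ and $U'''ik\phi_k$ terms via elliptic regularity match the paper exactly. Where you diverge is in the commutator $\Re\langle U'ik\omega_k,\partial_y\omega_k\rangle$, and your observation there is sharper than the paper's text: indeed $U'=1+W$, and the unit part contributes a genuinely $O(1)$ term since $\Re\langle ik\omega_k,\partial_y\omega_k\rangle$ does \emph{not} vanish in general. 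The paper's displayed bound $\abs{\Re\langle ikU'\omega_k,\partial_y\omega_k\rangle}\lesssim\norm{W}_{H^4}\,\abs{k}\norm{\omega_k}\norm{\partial_y\omega_k}$ silently drops this constant contribution, so the $\delta_0$-prefactor in the stated conclusion is not actually obtained for that piece; neither your route nor theirs can produce it, and the honest constant in front of $\nu^{1/3}\abs{k}^{2/3}\norm{\omega_k}_{L^2}^2$ is $O(1)$. This does not damage the downstream argument in Section 4.5, since $c_\alpha$ is a free parameter that can be taken $\ll c_\beta$.

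Where you overcomplicate: the interpolation $\norm{\partial_y\omega_k}\le\norm{\omega_k}^{1/2}\norm{\grad_k\partial_y\omega_k}^{1/2}$ followed by Young with exponents $(4/3,4)$ is correct (the scalings work out to $\tfrac14\nu^{5/3}\abs{k}^{-2/3}\norm{\grad_k\partial_y\omega_k}_{L^2}^2+\tfrac34\nu^{1/3}\abs{k}^{2/3}\norm{\omega_k}_{L^2}^2$), but it is unnecessary. A direct Young split of $\nu^{2/3}\abs{k}^{1/3}\norm{\omega_k}\norm{\partial_y\omega_k}$ into $\tfrac12\nu\norm{\partial_y\omega_k}_{L^2}^2+\tfrac12\nu^{1/3}\abs{k}^{2/3}\norm{\omega_k}_{L^2}^2$ — exactly what the paper does — already produces terms of the allowed form, with the first controlled by $\mathrm{D}_{k,\gamma}$; no absorption into the LHS $\alpha$-dissipation is needed, and the constant is no worse. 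Your statement that ``a direct Young split cannot absorb'' conflates absorption into the LHS (which is indeed impossible here without interpolating) with placing the term on the RHS under $\mathrm{D}_{k,\gamma}$ (which works fine). So: your proof is correct, more careful about the $U'$ decomposition than the paper's text, genuinely different in the final Young step, but also slightly more elaborate than required, and like the paper it proves the lemma with $O(1)$ rather than $\delta_0$ in front of the $\nu^{1/3}\abs{k}^{2/3}\norm{\omega_k}_{L^2}^2$ contribution.
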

\begin{proof}
We compute the time-derivative
\begin{align*}
\frac{1}{2}\frac{d}{dt}\norm{\partial_y \omega_k}_{L^2}^2 =  \nu \Re \brak{\Delta_k \partial_y \omega_k, \partial_y \omega_k}-\Re \brak{ikU' \omega_k, \partial_y \omega_k} + \Re \brak{\partial_y (U'' ik \phi_k),\partial_y \omega_k}. 
\end{align*}
Using that $\Delta_k \omega_k = \partial_{yy} \omega_k = 0$ on the boundary $\{y=\pm 1\}$, we may still integrate by parts on the first term, which gives rise to the desired dissipation term, 
\begin{align*}
\nu \Re \brak{\Delta_k \partial_y \omega_k, \partial_y \omega_k} = - \nu \norm{\grad_k \partial_y \omega_k}_{L^2}^2. 
\end{align*}
The second and third terms are handled via Cauchy-Schwarz, elliptic regularity and the smallness assumption \eqref{asmp}, 
\begin{align*}
\abs{\Re \brak{ikU' \omega_k, \partial_y \omega_k}} \lesssim \norm{W}_{H^4}&|k|\norm{\omega_k}_{L^2}\norm{\pa_y \omega_k}_{L^2}\lesssim \delta_0 \abs{k} \norm{ \omega_k}_{L^2} \norm{\partial_y \omega_k}_{L^2}, \\
\lf|\Re \brak{\partial_y (U'' ik \phi_k),\partial_y \omega_k}\rg| & \lesssim \delta_0 \norm{ \omega_k}_{L^2} \norm{\partial_y \omega_k}_{L^2}, 
\end{align*}
which completes the desired estimates after noting that
\begin{align*}
\frac{\nu^{2/3}}{|k|^{2/3}} \abs{k} \norm{ \omega_k}_{L^2} \norm{\partial_y \omega_k}_{L^2} \lesssim \nu \norm{\grad_k \omega_k}_{L^2}^2 + \nu^{1/3} |k|^{2/3}\norm{\omega_k}_{L^2}^2\approx\mathrm{D}_{k,\gamma}+\nu^{1/3} |k|^{2/3}\norm{\omega_k}_{L^2}^2. 
\end{align*}
\end{proof} 

\subsection{Estimate of the $\bb T_\beta$ Terms}
As in the case with standard hypocoercivity approaches, the cross-term $\bb T_\beta$, produces the enhanced dissipation.
\begin{lemma} \label{lem:LinBeta}
There holds the following, 
\begin{align*}
-\frac{\nu^{1/3}}{|k|^{4/3}}\frac{d}{dt} \Re \langle ik \omega_{k}, \p_y \omega_{k} \rangle + \nu^{1/3} \abs{k}^{2/3} \norm{\sqrt{U'} \omega_k}_{L^2}^2 & \lesssim  \mathrm{D}_{k,\gamma}^{1/2}\mathrm{D}_{k,\alpha}^{1/2}. 
\end{align*}
Here, the parameter $\delta_0$ and the dissipation $\{\mathrm{D}_{k,\gamma}, \mathrm{D}_{k,\al}\}$ are defined in \eqref{asmp}, \eqref{def:Dks}. The implicit constant depends on $\|U\|_{C^3}.$
\end{lemma}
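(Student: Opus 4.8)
The plan is to differentiate the cross term $\Re\langle ik\omega_k,\p_y\omega_k\rangle$ in time, substitute \eqref{NSchannel} for $\p_t\omega_k$ into both factors via the product rule, and sort the result into three groups according to the three terms in $\p_t\omega_k=-ikU\omega_k+ikU''\phi_k+\nu\Delta_k\omega_k$: a transport group, a nonlocal group, and a viscous group. For the transport group the contributions are $\Re\langle ik(-ikU\omega_k),\p_y\omega_k\rangle=\Re\langle|k|^2U\omega_k,\p_y\omega_k\rangle$ and $\Re\langle ik\omega_k,-ik\p_y(U\omega_k)\rangle=\Re\langle ik\omega_k,-ikU'\omega_k\rangle+\Re\langle ik\omega_k,-ikU\p_y\omega_k\rangle$. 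Integrating by parts in the two pieces carrying $U\p_y\omega_k$ — the boundary terms vanish because $\omega_k(\pm1)=0$, and the residual boundary contribution $ik|\p_y\omega_k|^2\big|_{\pm1}$ is purely imaginary, hence killed by $\Re$ — turns every piece into a multiple of $\int U'|\omega_k|^2\,dy$, and the multiples add up to \emph{exactly} $-|k|^2\|\sqrt{U'}\omega_k\|_{L^2}^2$, with no lower-order-in-$U$ remainder (here $U'=1+W$ is bounded below by a positive constant since $\|W\|_{H^4}\lesssim\delta_0$, so $\sqrt{U'}$ is a smooth $O(1)$ multiplier). Thus $\frac{d}{dt}\Re\langle ik\omega_k,\p_y\omega_k\rangle$ equals $-|k|^2\|\sqrt{U'}\omega_k\|_{L^2}^2$ plus the nonlocal and viscous errors, and multiplying through by the weight $\nu^{1/3}|k|^{-4/3}$ produces the coercive term $\nu^{1/3}|k|^{2/3}\|\sqrt{U'}\omega_k\|_{L^2}^2$ appearing on the left of the Lemma, leaving only the errors to be estimated.

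For the viscous group the contributions are $\nu\Re\langle ik\Delta_k\omega_k,\p_y\omega_k\rangle$ and $\nu\Re\langle ik\omega_k,\p_y\Delta_k\omega_k\rangle$. Writing $\Delta_k=\p_{yy}-|k|^2$ and integrating by parts in $y$ — using $\omega_k(\pm1)=0$ together with $\p_{yy}\omega_k(\pm1)=\Delta_k\omega_k(\pm1)=0$, the latter obtained by evaluating \eqref{NSchannel} at $y=\pm1$ where $\p_t\omega_k=U\omega_k=U''\phi_k=0$ — one checks that every boundary term produced is either identically zero or purely imaginary, while every interior remainder is a cross term of the schematic form $\nu|k|\,\|\grad_k\omega_k\|_{L^2}\|\grad_k\p_y\omega_k\|_{L^2}$ or $\nu|k|^3\|\omega_k\|_{L^2}\|\p_y\omega_k\|_{L^2}\le\nu|k|\,\|\grad_k\omega_k\|_{L^2}\|\grad_k\p_y\omega_k\|_{L^2}$ (the latter coming from the $-|k|^2$ part of $\Delta_k$, which reproduces a multiple of the very quantity $\Re\langle ik\omega_k,\p_y\omega_k\rangle$). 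Since $\mathrm{D}_{k,\gamma}^{1/2}=\nu^{1/2}\|\grad_k\omega_k\|_{L^2}$ and $\mathrm{D}_{k,\alpha}^{1/2}=\nu^{5/6}|k|^{-1/3}\|\grad_k\p_y\omega_k\|_{L^2}$, multiplying by $\nu^{1/3}|k|^{-4/3}$ turns this into $\lesssim\nu^{4/3}|k|^{-1/3}\|\grad_k\omega_k\|_{L^2}\|\grad_k\p_y\omega_k\|_{L^2}=\mathrm{D}_{k,\gamma}^{1/2}\mathrm{D}_{k,\alpha}^{1/2}$, which is the claimed bound.

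For the nonlocal group the $ikU''\phi_k$ terms are handled exactly as in Lemma~\ref{lem:LinGamma}: integrating by parts to place $\p_y$ on $U''\phi_k$ and then using $\Delta_k\phi_k=\omega_k$, $\phi_k(\pm1)=0$, the elliptic bounds $\|\phi_k\|_{L^2}\lesssim|k|^{-2}\|\omega_k\|_{L^2}$ and $\|\p_y\phi_k\|_{L^2}\lesssim|k|^{-1}\|\omega_k\|_{L^2}$, and the structure $U''=\p_yW$, $U'''=\p_{yy}W$ with $\|W\|_{H^4}\lesssim\delta_0$, one bounds these contributions (after the rescaling) by $\delta_0\mathrm{D}_{k,\tau}$ up to harmless powers of $|k|$ and by $\delta_0\mathrm{D}_{k,\beta}$ --- terms of the same kind already encountered in the $\bb T_\gamma$ estimate, which are absorbed into the inviscid-damping dissipation $\mathrm{D}_{k,\tau}$ and into the coercive term $\nu^{1/3}|k|^{2/3}\|\sqrt{U'}\omega_k\|^2$ respectively in the final energy estimate of Proposition~\ref{prop:lin} (such a $\delta_0\mathrm{D}_{k,\tau}$ may be kept on the right-hand side of the Lemma if one prefers to state the estimate with the nonlocal term displayed). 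Note that, because the transport cancellation is exact, the dependence of the implicit constant on $\|U\|_{C^3}$ enters only through this nonlocal group.

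The step I expect to be the main obstacle is the viscous bookkeeping: one must check with care that, after shuttling derivatives around, every boundary term is either genuinely zero (via $\omega_k(\pm1)=\p_{yy}\omega_k(\pm1)=0$) or imaginary (and so annihilated by $\Re$), and that the interior remainder consists \emph{solely} of Cauchy--Schwarz-able cross terms at the scale $\nu|k|\|\grad_k\omega_k\|_{L^2}\|\grad_k\p_y\omega_k\|_{L^2}$, with no surviving $\nu\|\grad_k\p_y\omega_k\|_{L^2}^2$ of the wrong sign. The transport group, by contrast, is a clean algebraic identity, and the nonlocal group is routine once the representation $U=y+\p_y\int_{-1}^1 G(\cdot,y')W(\cdot,y')\,dy'$ is used to extract the $\delta_0$-smallness.
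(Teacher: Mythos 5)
Your proof takes essentially the same route as the paper: differentiate the cross term, sort into transport / nonlocal / viscous groups, collapse the transport group exactly into the coercive $\|\sqrt{U'}\omega_k\|^2$ term, bound the nonlocal group via elliptic regularity and $\|W\|_{H^4}\lesssim\delta_0$, and Cauchy--Schwarz the viscous remainders into $\mathrm{D}_{k,\gamma}^{1/2}\mathrm{D}_{k,\alpha}^{1/2}$. This is precisely the paper's argument. Two points to tighten. First, the boundary contribution $ik|\partial_y\omega_k|^2\big|_{\pm1}$ you invoke does not occur in the transport integration by parts (those boundary terms are all of the form $U|\omega_k|^2\big|_{\pm1}$, which vanish outright from $\omega_k(\pm1)=0$); it would arise only if one also integrated by parts inside the viscous group, which the paper avoids by applying Cauchy--Schwarz to $\Re\langle ik\Delta_k\omega_k,\partial_y\omega_k\rangle$ directly and integrating by parts once on $\Re\langle ik\omega_k,\partial_y\Delta_k\omega_k\rangle$ using only $\omega_k(\pm1)=0$. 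No $\partial_{yy}\omega_k(\pm1)=0$ is needed here, and no $\|\nabla_k\partial_y\omega_k\|^2$ can appear, so the worry in your last paragraph is unfounded.

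Second, and more importantly: your transport identity gives $-|k|^2\|\sqrt{U'}\omega_k\|_{L^2}^2$, whereas the paper's display asserts $+|k|^2\|\sqrt{U'}\omega_k\|_{L^2}^2$. Your sign is the correct one (quick check: $\Re\langle ikf,\partial_y f\rangle=\int k\eta|\hat f|^2\,d\eta$, and under free streaming $\hat f(t,\eta)=\hat f_0(\eta+kt)$, so $\tfrac{d}{dt}\Re\langle ikf,\partial_y f\rangle=-k^2\|f\|^2$). But then, taking the Lemma's left-hand side literally with the $-\tfrac{\nu^{1/3}}{|k|^{4/3}}\tfrac{d}{dt}$ sign, the transport contribution \emph{adds} a second copy of $\nu^{1/3}|k|^{2/3}\|\sqrt{U'}\omega_k\|^2$ rather than absorbing the displayed one, so the step ``multiplying through by the weight produces the coercive term, leaving only the errors'' does not hold as written. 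The resolution is that the $-c_\beta$ in \eqref{def:Ek} and the $-\tfrac{d}{dt}$ in the Lemma carry a sign typo; the $+c_\beta$ appearing in \eqref{T_all} is the consistent choice, and with it your computation closes exactly. You should flag the discrepancy rather than match the displayed identity without comment.
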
 
\begin{proof} 
Here we have by integration by parts (using that $\omega_k$, $\phi_k$ vanish on the boundary), 
\begin{align*}
\frac{d}{dt} \Re \langle ik \omega_{k}, \p_y \omega_{k} \rangle & = \Re \langle ik ( - ik U \omega_k {+ U''} ik \phi_k + \nu \Delta_k \omega_k), \p_y \omega_k \rangle \\
& \quad + \Re \langle ik \omega_k, \partial_y ( - ik U \omega_k {+ U''} ik \phi_k + \nu \Delta_k \omega_k) \rangle \\
& = \abs{k}^2 \norm{\sqrt{U'} \omega_k}_{L^2}^2  {-}2\Re \abs{k}^2 \brak{U''  \phi_k,\partial_y \omega_k} \\
& \quad + \mathrm{Im}\ k \brak{\omega_k,\nu \partial_y\Delta_k \omega_k} + \mathrm{Im}\ k \brak{\partial_y \omega_k,\nu \Delta_k \omega_k}. 
\end{align*}The first term on the right hand side is the dissipation and the remaining terms are error terms. 
To treat the first error term, we integrate by parts (using the boundary conditions) and elliptic regularity to obtain 
\begin{align*}
\abs{ 2\Re \abs{k}^2 \brak{U''  \phi_k,\partial_y \omega_k}} & \lesssim \left(\norm{U''}_{L^\infty} \norm{k\partial_y \phi_k}_{L^2} + \norm{U'''}_{L^\infty} \norm{k \phi_k}_{L^2} \right) \abs{k}\norm{\omega_k}_{L^2} \\
& \lesssim \delta_0 \abs{k}\norm{\omega_k}_{L^2}^2. 
\end{align*}
For the latter error terms note that by integrating by parts and Cauchy-Schwarz,
\begin{align*}
\abs{\mathrm{Im}\ k \brak{\omega_k,\nu \partial_y\Delta_k \omega_k} + \mathrm{Im} \ k \brak{\partial_y \omega_k,\nu \Delta_k \omega_k}} & \lesssim \nu\abs{k}\norm{\partial_y \omega_k}_{L^2}\left(|k|^2 \norm{\omega_k}_{L^2} + \norm{\partial_{yy}\omega_k}_{L^2} \right), 
\end{align*}
which completes the desired estimate upon noting that
\begin{align*}
\nu^{4/3} \abs{k}^{-1/3} \norm{\partial_y \omega_k}_{L^2}\left(|k|^2 \norm{\omega_k}_{L^2} + \norm{\partial_{yy}\omega_k}_{L^2} \right) \lesssim \left(\nu \norm{\grad_k \omega_k}_{L^2}\right)^{1/2} \left(\nu^{5/3} |k|^{-2/3} \norm{\grad_k \partial_y \omega_k}_{L^2}\right)^{1/2}.
\end{align*}
\end{proof} 

\subsection{Estimates of the $\bb T_\tau$ and $\bb T_{\tau\alpha}$ Terms} \label{sec:LinTau}
The most interesting contribution relative to existing works on hypocoercivity is that of $\bb T_\tau$ (and $\bb T_{\tau \alpha}$). 
First, we prove the main result for $\bb T_\tau$ and below we provide the estimate on $\bb T_{\tau \alpha}$. 
\begin{lemma} \label{lem:LinTau}
Under the regularity hypotheses on $W$ \eqref{asmp}, for all $\delta_0>0$ sufficiently small (depending only on universal constants), the following estimate holds 
\begin{align*}
\frac{d}{dt} \Re \langle \omega_k, \mathfrak{J}_k[\omega_k] \rangle + \frac{1}{8}\mathrm{D}_{k,\tau} \lesssim  \mathrm{D}_{k,\gamma}. 
\end{align*}
The implicit constant depends on $\|U\|_{ C^3}.$
\end{lemma}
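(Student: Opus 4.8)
The plan is to differentiate $\Re\langle\omega_k,\mathfrak{J}_k[\omega_k]\rangle$ in time, insert the equation \eqref{NSchannel}, and treat the transport, non-local, and viscous contributions separately. Since $\mathfrak{J}_k$ is self-adjoint (Lemma~\ref{lem:AS}) the functional is real and $\frac{d}{dt}\Re\langle\omega_k,\mathfrak{J}_k[\omega_k]\rangle = 2\Re\langle\partial_t\omega_k,\mathfrak{J}_k[\omega_k]\rangle$; substituting $\partial_t\omega_k = -ikU\omega_k + ikU''\phi_k + \nu\Delta_k\omega_k$ gives the three terms. Throughout one uses that, by elliptic regularity and one-dimensional Sobolev embedding, $\|U-y\|_{C^3}\lesssim\|W\|_{H^4}\lesssim\delta_0$, so in particular $\|U''\|_{L^\infty}+\|U'''\|_{L^\infty}\lesssim\delta_0$ while $\|U\|_{C^3}\lesssim 1$.

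The heart of the matter is the transport term. Using self-adjointness of $\mathfrak{J}_k$, the skew-symmetry identity of Lemma~\ref{lem:AS}, and that $U$ is real, one rewrites $2\Re\langle-ikU\omega_k,\mathfrak{J}_k[\omega_k]\rangle = \langle\omega_k,\, ik[U,\mathfrak{J}_k]\omega_k\rangle$, where the commutator $[U,\mathfrak{J}_k]$ has the \emph{non-singular} kernel $\tfrac{k}{2i}\,\tfrac{U(y)-U(y')}{y-y'}\,G_k(y,y')$. Splitting $U(y)=y+b(y)$ with $b:=U-y$, the Couette piece collapses because the factor $y-y'$ cancels the Hilbert kernel: $[y,\mathfrak{J}_k]=\tfrac{k}{2i}\Delta_k^{-1}$, so $\langle\omega_k,\, ik[y,\mathfrak{J}_k]\omega_k\rangle = \tfrac{k^2}{2}\langle\omega_k,\phi_k\rangle = \tfrac{k^2}{2}\langle\Delta_k\phi_k,\phi_k\rangle = -\tfrac{k^2}{2}\|\grad_k\phi_k\|_{L^2}^2 = -\tfrac12\mathrm{D}_{k,\tau}$, using $\Delta_k^{-1}\omega_k=\phi_k$ and integration by parts with $\phi_k|_{y=\pm1}=0$. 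This is the mechanism by which $\mathfrak{J}_k$ manufactures the inviscid–damping dissipation. It then remains to bound, by $\delta_0\,\mathrm{D}_{k,\tau}$, the remainder $R_1:=\langle\omega_k,\, ik[b,\mathfrak{J}_k]\omega_k\rangle = \tfrac{k^2}{2}\langle\Delta_k\phi_k,\mathcal{G}_k[\Delta_k\phi_k]\rangle$, where $\mathcal{G}_k$ is the operator with kernel $\tilde m(y,y')G_k(y,y')$, $\tilde m(y,y'):=\tfrac{b(y)-b(y')}{y-y'}=\int_0^1 b'(y'+t(y-y'))\,dt$, together with the non-local term.

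The crucial point for these two error terms is that one must \emph{not} estimate them through $\|\omega_k\|_{L^2}$, which is not controlled by $\mathrm{D}_{k,\gamma}=\nu\|\grad_k\omega_k\|_{L^2}^2$ without a factor $\nu^{-1}$; instead I integrate by parts exactly once so that each $\omega_k=\Delta_k\phi_k$ is traded for $\grad_k\phi_k$ tested against $\grad_k$ of a bounded operator applied to $\{\phi_k,\partial_y\phi_k\}$. For $R_1$: integrating by parts once in $y'$ (boundary terms vanish because $G_k(y,\pm1)=0$) writes $\mathcal{G}_k[\Delta_k\phi_k]=\mathcal{A}_k[\partial_y\phi_k]+\mathcal{B}_k[\phi_k]$, with $\mathcal{A}_k$ of kernel $-(\partial_{y'}\tilde m\,G_k+\tilde m\,\partial_{y'}G_k)$ and $\mathcal{B}_k$ of kernel $-k^2\tilde m\,G_k$; the explicit bounds $|G_k|\lesssim k^{-1}e^{-k|y-y'|}$, $|\partial_yG_k|+|\partial_{y'}G_k|\lesssim e^{-k|y-y'|}$ and the structure of $\partial_y\partial_{y'}G_k$ (a $\delta$-part plus an $O(1)$ operator part — handled by Schur's test exactly as in the proofs of Lemmas~\ref{lem:BoundI}--\ref{lem:H}), together with $\|\tilde m\|_{C^2}\lesssim\|b\|_{C^3}\lesssim\delta_0$, give $\|\mathcal{A}_k\|_{L^2\to L^2}+k^{-1}\|\partial_y\mathcal{A}_k\|_{L^2\to L^2}\lesssim\delta_0 k^{-1}$ and $\|\mathcal{B}_k\|_{L^2\to L^2}+k^{-1}\|\partial_y\mathcal{B}_k\|_{L^2\to L^2}\lesssim\delta_0$. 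A second integration by parts in $y$ (the kernels of $\mathcal{A}_k,\mathcal{B}_k$ vanish at $y=\pm1$, and $\phi_k|_{y=\pm1}=0$) turns $R_1$ into $-\tfrac{k^2}{2}\big(\langle\grad_k\phi_k,\grad_k(\mathcal{A}_k[\partial_y\phi_k])\rangle+\langle\grad_k\phi_k,\grad_k(\mathcal{B}_k[\phi_k])\rangle\big)$, and these operator bounds with $k\|\phi_k\|_{L^2}\le\|\grad_k\phi_k\|_{L^2}$ yield $|R_1|\lesssim\delta_0 k^2\|\grad_k\phi_k\|_{L^2}^2=\delta_0\mathrm{D}_{k,\tau}$ — note that since $\tilde m$ carries one derivative fewer than $U''$, only $\|U\|_{C^3}$ enters. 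The non-local term is handled in the same spirit: $2\Re\langle ikU''\phi_k,\mathfrak{J}_k[\omega_k]\rangle=2\Re\langle ik\mathfrak{J}_k[U''\phi_k],\Delta_k\phi_k\rangle = -2\Re\langle ik\,\grad_k\mathfrak{J}_k[U''\phi_k],\grad_k\phi_k\rangle$ after one integration by parts (boundary terms vanish since $\phi_k|_{y=\pm1}=0$), and using $\partial_y\mathfrak{J}_k=\mathfrak{J}_k\partial_y+\mathfrak{H}_k$ (Lemma~\ref{lem:CommIpy}) one has $\partial_y\mathfrak{J}_k[U''\phi_k]=\mathfrak{J}_k[U'''\phi_k+U''\partial_y\phi_k]+\mathfrak{H}_k[U''\phi_k]$; then $\|\mathfrak{J}_k\|_{L^2\to L^2}\lesssim1$ (Lemma~\ref{lem:BoundI}), $\|\mathfrak{H}_k\|_{L^2\to L^2}\lesssim k$ (Lemma~\ref{lem:H}), $\|U''\|_{L^\infty}+\|U'''\|_{L^\infty}\lesssim\delta_0$, and $k\|\phi_k\|_{L^2}\le\|\grad_k\phi_k\|_{L^2}$ make this term $\lesssim\delta_0\mathrm{D}_{k,\tau}$ as well.

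Finally, the viscous term $2\nu\Re\langle\Delta_k\omega_k,\mathfrak{J}_k[\omega_k]\rangle$ is handled by one integration by parts (boundary terms vanish since $\mathfrak{J}_k[\omega_k]|_{y=\pm1}=0$, as $G_k(\pm1,\cdot)\equiv0$) plus $\partial_y\mathfrak{J}_k[\omega_k]=\mathfrak{J}_k[\partial_y\omega_k]+\mathfrak{H}_k[\omega_k]$ and Lemmas~\ref{lem:BoundI}--\ref{lem:H}, giving $\lesssim\nu\|\grad_k\omega_k\|_{L^2}^2=\mathrm{D}_{k,\gamma}$. Collecting the four contributions, $\frac{d}{dt}\Re\langle\omega_k,\mathfrak{J}_k[\omega_k]\rangle\le -\tfrac12\mathrm{D}_{k,\tau}+C\delta_0\mathrm{D}_{k,\tau}+C\mathrm{D}_{k,\gamma}$, and choosing $\delta_0$ so small that $C\delta_0\le\tfrac38$ proves the lemma. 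The main obstacle is precisely the pair of points above: recognizing the commutator structure $\langle\omega_k,ik[U,\mathfrak{J}_k]\omega_k\rangle$, which simultaneously desingularizes the kernel and, for $U=y$, extracts \emph{exactly} the dissipation $-\tfrac12\mathrm{D}_{k,\tau}$ because $[y,\mathfrak{J}_k]=\tfrac{k}{2i}\Delta_k^{-1}$; and controlling the remainder $R_1$ and the non-local term by $\delta_0\mathrm{D}_{k,\tau}$ rather than by $\|\omega_k\|_{L^2}$, which forces the single integration by parts converting $\Delta_k\phi_k$ into $\grad_k\phi_k$ before one may invoke the $L^2$-boundedness of $\mathfrak{J}_k$ and $\mathfrak{H}_k$ and the explicit kernel estimates on $G_k$.
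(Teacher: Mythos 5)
Your proof is correct, and your treatment of the transport term is a genuinely different route from the paper's.  The paper Taylor-expands the difference quotient $\frac{U(y)-U(y')}{y-y'}$ about $y$, decomposing it as $U'(y) + \tfrac12 U''(y)(y-y') + r(y,y')$; the leading term produces $-\tfrac14 |k|^{2}\|\sqrt{U'}\grad_k\phi_k\|_{L^2}^2$ up to absorbable errors, and—crucially—the residual kernels $U''(y)(y-y')G_k$ and $r(y,y')G_k$ vanish (to first and second order) at $y=y'$, so that $\partial_y\partial_{y'}$ of these kernels contains \emph{no} $\delta$-part and the double integration by parts is clean.  You instead split $U = y + b$ and isolate the exact algebraic identity $[y,\mathfrak{J}_k] = \frac{k}{2i}\Delta_k^{-1}$, which the paper never writes down explicitly, to extract precisely $-\tfrac12\mathrm{D}_{k,\tau}$ from the Couette part; the $b$-commutator remainder has kernel $\tilde m G_k$ with $\tilde m(y,y') = \frac{b(y)-b(y')}{y-y'}$, which does \emph{not} vanish at the diagonal.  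That costs you a $\delta$-contribution in $\partial_y\partial_{y'}(\tilde m G_k)$, but, as you correctly note, it is just multiplication by $b'(y)$ and is therefore bounded by $\delta_0$; the off-diagonal part is handled by Schur as in Lemmas~\ref{lem:BoundI}--\ref{lem:H}.  Your treatment of the non-local $U''\phi_k$ piece and the viscous piece matches the paper's modulo where you place the integrations by parts, and the key philosophy---always trade $\omega_k=\Delta_k\phi_k$ for $\grad_k\phi_k$ rather than paying a power of $\nu$ through $\mathrm{D}_{k,\gamma}$---is the same.

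What each route buys: yours makes the damping mechanism transparent (a commutator identity for Couette, yielding the sharp coefficient $\tfrac12$ rather than $\tfrac14\inf U'$), and is conceptually tighter; the paper's Taylor route never produces a $\delta$-function and inherits the explicit kernel estimates on $rG_k$ and $\mathcal{N}_k=U''(y)(y-y')G_k$ already set out, whereas you would need to spell out the bounds on $\grad_k\mathcal{A}_k$ and $\grad_k\mathcal{B}_k$ (including the $\delta$-part of $\partial_y\partial_{y'}G_k$, which in the paper's setting is annihilated by the extra factor of $(y-y')$).  Both require $U\in C^3$, i.e., the same $W\in H^4$ hypothesis, and both close the lemma.
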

\begin{remark}
In the case of the Euler equations ($\nu = 0$), one simply obtains
\begin{align*}
\frac{d}{dt} \Re \langle \omega_k, \mathfrak{J}_k[\omega_k] \rangle + \frac{1}{8} \mathrm{D}_{k,\tau} \leq 0, 
\end{align*}
proving the $L^2_t H^1_x L^2_y$ inviscid damping estimate for $L^2$ initial data. 
\end{remark}
\begin{proof} 
First of all, since the SIO $\mathfrak{J}_k$ is \siming{symmetric} 
(Lemma \ref{lem:AS}), we obtain from the equation \eqref{NSchannel} that
\begin{align}\label{LinTau_1}
\frac{1}{2}\frac{d}{dt} \Re \langle \omega_k, \mathfrak{J}_k[\omega_k] \rangle & =  \Re \brak{ -Uik \omega_k \myr{+} U'' ik \phi_k + \nu\Delta_k \omega_k, \mathfrak{J}_k[\omega_k]}  
  =: T_1 + T_2 + T_3. 
\end{align}
The most straightforward term is that arising from the dissipation, $T_3$. 
Using the symmetry Lemma \ref{lem:AS} and the $[\pa_y,\mathfrak{J}_k]$-commutator estimate (Lemma \ref{lem:CommIpy}, Lemma \ref{lem:H}), we have (using that $\omega_k$ vanishes on the boundary and that $\mathfrak{J}_k$ is bounded), 
\ifx\begin{align*}
T_3 & = \nu \Re \int_{-1}^1 \Delta_k \overline{\omega_k} \mathfrak{J}_k[\omega_k] \dy = \nu \Re \int_{-1}^1 \overline{\mathfrak{J}_k[\Delta_k\omega_k]} \omega_k \dy = -\nu \Re \int \overline{\grad_k \omega_k}  \mathfrak{J}_k[\grad_k \omega_k] \dee y \lesssim \nu \norm{\grad_k \omega_k}_{L^2}^2 \lesssim \mathcal{D}_\gamma,
\end{align*}\fi
\siming{
\begin{align*}
T_3 & = \nu \Re \int_{-1}^1 \Delta_k \omega_k \overline{\mathfrak{J}_k[\omega_k]} \dy = \nu \Re \int_{-1}^1 \mathfrak{J}_k[\Delta_k\omega_k] \overline{\omega_k} \dy\\
& = \nu \Re \int\lf( -\mathfrak{J}_k[\grad_k \omega_k ]\overline{\grad_k \omega_k}+[\mathfrak{J}_k,\pa_y](\pa_y \omega_k)\overline{\omega_k}\rg) \dee y \\
&\leq \nu\lf(\norm{\mathfrak{J}_k}_{L^2\rightarrow L^2}+|k|^{-1}\norm{[\mathfrak{J}_k,\pa_y]}_{L^2\rightarrow L^2}\rg)\norm{\na_k\omega_k}_{L^2}^2\lesssim \nu \norm{\grad_k \omega_k}_{L^2}^2 \lesssim \mathrm{D}_{k,\gamma},
\end{align*}}
which is our desired estimate on $T_3$ in \eqref{LinTau_1}.

Next, we estimate $T_2$.
By Lemma \ref{lem:CommIpy}, \ref{lem:H}, and $\omega_k = \Delta_k \phi_k$, we have \ifx
\begin{align*}
T_2 
& = \Re\int_{-1}^1  ik \grad_k \left(U''(y) \overline{\phi}_k\right) \cdot \mathfrak{I}[\grad_k \phi_k] dy \\
& \lesssim \abs{k} \norm{U''}_{C^1} \left(\norm{\phi_k}_{L^2} + \norm{\grad_k \phi}_{L^2}\right) \norm{\mathfrak{I}[\grad_k \phi_k]}_{L^2} \\
& \lesssim \abs{k} \norm{U''}_{C^1} \norm{\grad_k \phi_k}_{L^2}^2 \\
& \lesssim \delta_0 \mathcal{D}_\tau. 
\end{align*}\fi
\siming{\begin{align*}
T_2 & = \Re\int_{-1}^1  \left(ik U'' \phi_k\right) \overline{\grad_k\cdot \mathfrak{J}_k[\grad_k \phi_k]+[\mathfrak{J}_k,\pa_y][\pa_y \phi_k]} dy \\
&=\Re\int_{-1}^1   -\na_k\left(ik U'' \phi_k\right)\cdot \overline{ \mathfrak{J}_k[\grad_k \phi_k]} +\left(ik U'' \phi_k\right)\overline{[\mathfrak{J}_k,\pa_y][\pa_y \phi_k]}dy \\
& \lesssim \abs{k} \norm{U''}_{C^1} \lf( \|\mathfrak{J}_k\|_{L^2\rightarrow L^2}+|k|^{-1}\norm{[\mathfrak{J}_k,\pa_y]}_{L^2\rightarrow L^2}\rg)\norm{\grad_k \phi_k}_{L^2}^2\lesssim \delta_0 \mathrm{D}_{k,\tau}. 
\end{align*}}
where the last line used Lemma \ref{lem:BoundI}. 
This is sufficient to estimate $T_2$ in \eqref{LinTau_1}. 

Finally we turn to the most interesting contribution, $T_1$ which will lead to the negative-definite term.
By Taylor's theorem, 
\begin{align*}
U(y)-U(y')=U'(y)(y'-y) + \frac{1}{2}U''(y)(y'-y)^2 + \frac{1}{6}\int_y^{y'}U'''(s)(y'-s)^2 \dee s.
\end{align*}
Note that remainder satisfies 
\begin{subequations} \label{r_fn}
\begin{align}
&r(y,y'):=\frac{1}{6(y-y')}\int_y^{y'}U'''(s)(y'-s)^2 \dee s \in L^\infty, \\
& \abs{r(y,y')} \lesssim \norm{U'''}_{L^\infty} \abs{y-y'}^2, \\ 
&\abs{\partial_y r(y,y')}  + \abs{\partial_{y'} r(y,y')} \lesssim \norm{U'''}_{L^\infty} \abs{y-y'}, \\
&\abs{\partial_{y'} \partial_y r(y,y')} \lesssim \norm{U'''}_{L^\infty}. 
\end{align} 
\end{subequations}
Next, by the definition of $\mathfrak{J}_k$ and Lemma \ref{lem:AS}, we have 
\begin{align}\n
T_1  =& -\frac{k}{2}\Re\lf(\brak{iU\omega_k,\mathfrak{J}_k\omega_k}+\brak{\mathfrak{J}_k[i U\omega_k],\omega_k}
\rg)\\
=&\siming{\frac{|k|^2}{4}}\Re \int_{-1}^1\siming{\text{p.v.}}\lf(\int_{-1}^1   U(y)\omega_k(y)\frac{1}{(y-y')} G_k(y,y')\overline{\omega_k(y')}dy'\rg)dy\n \\
&-\siming{\frac{|k|^2}{4}}\Re \int_{-1}^1\siming{\text{p.v.}}\lf(\overline{\int_{-1}^1 \overline{\omega_k(y)}\frac{1}{(y-y')} G_k(y,y') U(y')\omega_k(y')dy'}\rg) dy\n \\
=&\frac{\abs{k}^{2\mh{-\delta}}}{4}\Re \int_{-1}^1\int_{-1}^1 \overline{\omega_k(y)}\frac{(U(y)-U(y'))}{(y-y')} G_k(y,y')\omega_k(y')dy'dy\n \\
=&\frac{\abs{k}^{2\mh{-\delta}}}{4}\Re \int_{-1}^1\int_{-1}^1 \overline{\omega_k(y)}\frac{U'(y) (y-y')}{(y-y')} G_k(y,y')\omega_k(y')dy'dy\n \\
& \quad +\frac{\abs{k}^{2\mh{-\delta}}}{8 }\Re \int_{-1}^1\int_{-1}^1 \overline{\omega_k(y)}  U''(y)(y-y')  G_k(y,y')\omega_k(y')dy'dy\n \\
& \quad +\frac{\abs{k}^{2\mh{-\delta}}}{4}\Re \int_{-1}^1\int_{-1}^1 \overline{\omega_k(y)} r(y,y') G_k(y,y')\omega_k(y')dy'dy\n \\
=: & T_{11} + T_{12} + T_{13}.\label{LinTau_2}
\end{align}
The term $T_{11}$ leads to $-\mathrm{D}_{k,\tau}$.
Indeed, applying the relation $\de_k\phi_k=\omega_k$ and integrating by parts (using the boundary condition that $\phi_k$ vanishes on the boundary), 
\begin{align*}
T_{11}
 &=\frac{\abs{k}^{2\mh{-\delta}}}{4}\Re\int_{-1}^1 U'\overline{\Delta_k\phi_k} \phi_k \dy = - \frac{\abs{k}^{2\mh{-\delta}}}{4} \norm{\sqrt{U'} \grad_k \phi_k}_{L^2}^2  -\frac{\abs{k}^{2\mh{-\delta}}}{4}\Re \int U'' \pa_y\phi_k\overline{\phi_k}\dy \\
& \leq - \frac{\abs{k}^{2\mh{-\delta}}}{4} \norm{\sqrt{U'} \grad_k \phi_k}_{L^2}^2  +\frac{\abs{k}^{2\mh{-\delta}}}{4} \norm{U''}_{L^\infty} \norm{\pa_y\phi_k}_{L^2} \norm{\phi_k}_{L^2}. 
\end{align*}
Therefore, for $\|W_{in}\|_{H^4}\leq\delta_0$  sufficiently small depending only on universal constants, there holds 
\begin{align}\label{LinTau2T1}
T_{11} \leq -\frac{\abs{k}^{2\mh{-\delta}}}{6} \norm{\grad_k \phi_k}_{L^2}^2. 
\end{align}
Next, consider $T_{12}$, which we begin by making the replacement $\omega_k = \Delta_k \phi_k$ and integrating by parts in both $y$ and $y'$ (noting that the Green's function vanishes on the boundary in both variables, i.e. $G_k(\pm1,y') = G_k(y,\pm 1)\siming{=\pa_yG_k(y,\pm 1)=\pa_{y'}G_k(\pm1,y') = 0 }$) to obtain 
\begin{align*}
T_{12} & = \frac{1}{8 }\abs{k}^{2\mh{-\delta}} \Re \int_{-1}^1\int_{-1}^1 \overline{\Delta_k \phi_k(y)} U''(y) (y-y') G_k(y,y') \Delta_k \phi_k(y')\dy' \dy \\ 
 &= \frac{1}{8 }\abs{k}^{2\mh{-\delta}} \Re \int_{-1}^1\int_{-1}^1 \overline{ \na_{k,y}\phi_k(y)}\cdot\na_{k,y}\bigg (  \na_{k,y'}\left(U''(y) (y-y') G_k(y,y')\right) \cdot\na_{k,y'}\phi_k(y')\bigg)\dy' \dy.
\end{align*}
We will return to this estimate in a moment.
We observe the following estimates on the integral kernels $\mathcal{N} = U''(y) (y-y') G_k(y,y')$
\begin{align*}
\norm{\mathcal{N}_k}_{L^2_{y,y'}} & \lesssim \norm{U''}_{L^\infty} \abs{k}^{-2}, \\
\norm{\partial_y \mathcal{N}_k}_{L^2_{y,y'}} + \norm{\partial_{y'} \mathcal{N}_k}_{L^2_{y,y'}} & \lesssim \norm{U'''}_{L^\infty} \abs{k}^{-1} ,\\
\norm{\partial_{y} \partial_{y'} \mathcal{N}_k }_{L^2_{y,y'}} & \lesssim \norm{U'''}_{L^\infty},
\end{align*}
which follow from the the formula for $G_k$, \eqref{G_k}, and direct calculation.
Therefore, by Cauchy-Schwarz, we have
\begin{align}
T_{12} \lesssim \delta_0\abs{k}^{2\mh{-\delta}} \norm{\grad_k \phi_k}_{L^2}^2,\label{LinTau2T2}
\end{align}
which suffices to treat this term. 
We similarly set up $T_{13}$, which we treat in essentially the same manner: 
\begin{align*}
T_{13}  
&\siming{=-\Re \frac{\abs{k}^{2}}{4}\int_{-1}^1\int_{-1}^1 \overline{\grad_{k,y} \phi_k(y)} \cdot\grad_{k,y} \lf(\grad_{k,y'}\left(r(y,y') G_k(y,y') \right)\cdot \grad_{k,y'}\phi_k(y')\rg) \dy' \dy,}
\end{align*}
where in order to integrate by parts twice 
we needed to use that $rG_k$ does not have a singularity in the second derivative at $y=y'$ (unlike $G_k$). 
Next we record the relevant estimates on $rG_k$. Recall that we assume $k> 0$ without loss of generality in this section.   
First, note that by \eqref{G_k} and \eqref{r_fn},
\begin{align*}
\abs{rG_k} & \lesssim \delta_0 \frac{\abs{y-y'}^2}{|k|\sinh(2k)}\left\{\begin{array}{cc}\sinh(k(1-y'))\sinh(k(1+y)),&\quad y\leq y';\\
\sinh(k(1-y))\sinh(k(1+y')),&\quad y\geq y'.\end{array}\right. \\
\abs{\partial_y(rG_k)} & \lesssim \delta_0 \frac{\abs{y-y'}}{|k|\sinh(2k)}\left\{\begin{array}{cc}\sinh(k(1-y'))\sinh(k(1+y)),&\quad y\leq y';\\
\sinh(k(1-y))\sinh(k(1+y')),&\quad y\geq y'.\end{array}\right. \\
& \quad + \delta_0 \frac{\abs{y-y'}^2}{\sinh(2k)}\left\{\begin{array}{cc}\sinh(k(1-y'))\cosh(k(1+y)),&\quad y\leq y';\\
|-\cosh(k(1-y))\sinh(k(1+y'))|,&\quad y\geq y'.\end{array}\right. \\
\abs{\partial_{y'}(rG_k)} & \lesssim \delta_0 \frac{\abs{y-y'}}{|k|\sinh(2k)}\left\{\begin{array}{cc}\sinh(k(1-y'))\sinh(k(1+y)),&\quad y\leq y';\\
\sinh(k(1-y))\sinh(k(1+y')),&\quad y\geq y'.\end{array}\right. \\
& \quad + \delta_0 \frac{\abs{y-y'}^2}{\sinh(2k)}\left\{\begin{array}{cc}|-\cosh(k(1-y'))\sinh(k(1+y))|,&\quad y\leq y';\\
\sinh(k(1-y))\cosh(k(1+y')),&\quad y\geq y'.\end{array}\right. \\
\abs{\partial_y \partial_{y'} (rG_k)} & \lesssim \delta_0 \frac{1}{|k|\sinh(2k)}\left\{\begin{array}{cc}\sinh(k(1-y'))\sinh(k(1+y)),&\quad y\leq y';\\
\sinh(k(1-y))\sinh(k(1+y')),&\quad y\geq y'.\end{array}\right. \\
& \quad + \delta_0 \frac{\abs{y-y'}}{\sinh(2k)}\left\{\begin{array}{cc}|-\cosh(k(1-y'))\sinh(k(1+y))|,&\quad y\leq y';\\
\sinh(k(1-y))\cosh(k(1+y'))|,&\quad y\geq y'.\end{array}\right. \\
& \quad + \delta_0\frac{\abs{y-y'}}{\sinh(2k)}\left\{\begin{array}{cc}|-\cosh(k(1-y'))\sinh(k(1+y))|,&\quad y\leq y';\\
\sinh(k(1-y))\cosh(k(1+y')),&\quad y\geq y'.\end{array}\right. \\
& \quad + \delta_0 \frac{|k| \abs{y-y'}^2}{\sinh(2k)}\left\{\begin{array}{cc}\cosh(k(1-y'))\cosh(k(1+y)),&\quad y\leq y';\\
\cosh(k(1-y))\cosh(k(1+y')),&\quad y\geq y'.\end{array}\right..
\end{align*}
Therefore, it follows that
\begin{align*}
\norm{rG_k}_{L^2_{y,y'}} & \lesssim \delta_0 \abs{k}^{-2}, \\
\norm{\partial_y(rG_k)}_{L^2_{y,y'}} + \norm{\partial_{y'} (rG_k)}_{L^2_{y,y'}} & \lesssim \delta_0 \abs{k}^{-1}, \\
\norm{\partial_{y} \partial_{y'} (rG)}_{L^2_{y,y'}} & \lesssim \delta_0. 
\end{align*}
Therefore, by Cauchy-Schwarz, we have
\begin{align*}
T_{13} \lesssim \delta_0\myr{ \abs{k}^2} \norm{\grad_k \phi_k}_{L^2}^2 \lesssim  \delta_0 \mathrm{D}_{k,\tau}, 
\end{align*}
which completes the desired estimate. 
\end{proof} 

Next, let us consider the $\bb T_{\tau \alpha}$, for which we prove the following.
\begin{lemma} \label{lem:LinTauAlpha}
Under the hypotheses of Proposition \ref{prop:lin}, there holds for $\delta_0$ sufficiently small,
\begin{align*}
\frac{d}{dt} \abs{k}^{-2/3}  \frac{d}{dt} \Re \langle (\nu^{1/3} \partial_y) \omega_k, \mathfrak{J}_k[(\nu^{1/3} \partial_y)\omega_k] \rangle + \frac{1}{12}\mathrm{D}_{k,\tau\alpha} \lesssim \mathrm{D}_{k,\alpha} + \left(\nu^{1/3} |k|^{2/3} \norm{\omega_k}_{L^2}^2 \right)^{1/2} \mathrm{D}_{k,\gamma}^{1/2}. 
\end{align*}
\siming{The implicit constant depends on $\|U\|_{C^3}.$}
\end{lemma}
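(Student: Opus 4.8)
The plan is to adapt the proof of Lemma~\ref{lem:LinTau} verbatim, with $\partial_y\omega_k$ in place of $\omega_k$, and to deal with the two genuinely new features: a new lower-order linear term in the evolution of $\partial_y\omega_k$, and the fact that neither $\partial_y\omega_k$ nor $\partial_y\phi_k$ vanishes at $y=\pm1$. Differentiating \eqref{NSchannel} in $y$ and using $\partial_y\Delta_k=\Delta_k\partial_y$ gives
\[
\partial_t(\partial_y\omega_k) + ikU\,\partial_y\omega_k + ikU'\omega_k - ik\,\partial_y(U''\phi_k) - \nu\Delta_k\partial_y\omega_k = 0,
\]
so that, as in \eqref{LinTau_1} and using the symmetry of $\mathfrak{J}_k$ (Lemma~\ref{lem:AS}),
\[
\tfrac12\tfrac{d}{dt}\Re\langle \partial_y\omega_k,\mathfrak{J}_k[\partial_y\omega_k]\rangle
= \Re\big\langle -ikU\,\partial_y\omega_k - ikU'\omega_k + ik\,\partial_y(U''\phi_k) + \nu\Delta_k\partial_y\omega_k,\ \mathfrak{J}_k[\partial_y\omega_k]\big\rangle
=: \widetilde T_1 + \widetilde T_1' + \widetilde T_2 + \widetilde T_3 .
\]

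The term $\widetilde T_3$ is the diffusion term and is handled exactly as $T_3$: write $\mathfrak{J}_k[\Delta_k\partial_y\omega_k]=\grad_k\cdot\mathfrak{J}_k[\grad_k\partial_y\omega_k]-\mathfrak{H}_k[\partial_{yy}\omega_k]$ using $[\partial_y,\mathfrak{J}_k]=\mathfrak{H}_k$ (Lemmas~\ref{lem:CommIpy},~\ref{lem:H}), integrate by parts (with no boundary contribution since $\mathfrak{J}_k[g](\pm1)=0$ for every $g$, because $G_k(\pm1,\cdot)\equiv0$), and invoke $\|\mathfrak{J}_k\|_{L^2\to L^2}\lesssim1$, $\|\mathfrak{H}_k\|_{L^2\to L^2}\lesssim|k|$, to get $\widetilde T_3\lesssim\nu\|\grad_k\partial_y\omega_k\|_{L^2}^2$, hence $\nu^{2/3}|k|^{-2/3}\widetilde T_3\lesssim\mathrm{D}_{k,\alpha}$. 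The new term $\widetilde T_1'$, which has no analogue in Lemma~\ref{lem:LinTau}, is bounded crudely by $\|\mathfrak{J}_k\|_{L^2\to L^2}\lesssim1$ and $\|U'\|_{L^\infty}\lesssim1$: $|\widetilde T_1'|\lesssim|k|\,\|\omega_k\|_{L^2}\|\partial_y\omega_k\|_{L^2}$, and a Young split against $\mathrm{D}_{k,\beta}=\nu^{1/3}|k|^{2/3}\|\omega_k\|_{L^2}^2$ and $\mathrm{D}_{k,\gamma}\ge\nu\|\partial_y\omega_k\|_{L^2}^2$ yields precisely $\nu^{2/3}|k|^{-2/3}|\widetilde T_1'|\lesssim(\nu^{1/3}|k|^{2/3}\|\omega_k\|^2)^{1/2}\mathrm{D}_{k,\gamma}^{1/2}$, the desired cross term. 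The term $\widetilde T_2$ is handled in the same crude manner: $|\widetilde T_2|\lesssim|k|\,\|\partial_y(U''\phi_k)\|_{L^2}\|\partial_y\omega_k\|_{L^2}\lesssim\delta_0|k|\,\|\grad_k\phi_k\|_{L^2}\|\partial_y\omega_k\|_{L^2}$, and the elliptic bound $\|\grad_k\phi_k\|_{L^2}\le|k|^{-1}\|\omega_k\|_{L^2}$ converts this into $|\widetilde T_2|\lesssim\delta_0\|\omega_k\|_{L^2}\|\partial_y\omega_k\|_{L^2}$, again absorbed into $\delta_0(\nu^{1/3}|k|^{2/3}\|\omega_k\|^2)^{1/2}\mathrm{D}_{k,\gamma}^{1/2}$; the extra factor $\nu^{2/3}$ in this energy term is exactly what makes the fine SIO/Hardy structure of $\phi_k$ superfluous here.

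The heart of the argument is $\widetilde T_1$, which produces the good term $-\mathrm{D}_{k,\tau\alpha}$. As in \eqref{LinTau_2}, $\widetilde T_1 = \tfrac{|k|^2}{4}\Re\int\!\!\int\overline{\partial_y\omega_k(y)}\,\tfrac{U(y)-U(y')}{y-y'}\,G_k(y,y')\,\partial_y\omega_k(y')\,dy'dy$, and Taylor expanding $\tfrac{U(y)-U(y')}{y-y'}=U'(y)+\tfrac12U''(y)(y-y')+r(y,y')$ as in \eqref{r_fn} splits $\widetilde T_1=\widetilde T_{11}+\widetilde T_{12}+\widetilde T_{13}$. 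The key device is to introduce $\Theta_k:=\Delta_k^{-1}[\partial_y\omega_k]$, which \emph{does} vanish at $y=\pm1$, and to observe $\Theta_k=\partial_y\phi_k-h_k$ with $h_k$ the $\Delta_k$-harmonic function with $h_k(\pm1)=\partial_y\phi_k(\pm1)$; the explicit formula \eqref{G_k} gives $\partial_y\phi_k(\pm1)=\int_{-1}^1\frac{\sinh(k(1\pm y'))}{\sinh 2k}\omega_k(y')\,dy'$, whence the trace bound $|\partial_y\phi_k(\pm1)|\lesssim|k|^{-1/2}\|\omega_k\|_{L^2}$, and therefore $\|\grad_k h_k\|_{L^2}\lesssim|k|^{1/2}|\partial_y\phi_k(\pm1)|\lesssim\|\omega_k\|_{L^2}$, i.e.\ $\|\grad_k\partial_y\phi_k\|_{L^2}^2\lesssim\|\grad_k\Theta_k\|_{L^2}^2+\|\omega_k\|_{L^2}^2$. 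With this, the inner integral of $\widetilde T_{11}$ is $\Theta_k$ and integration by parts (no boundary term, since $\Theta_k(\pm1)=0$) gives $\widetilde T_{11}=-\tfrac{|k|^2}{4}\|\sqrt{U'}\grad_k\Theta_k\|_{L^2}^2-\tfrac{|k|^2}{4}\Re\int U''\partial_y\Theta_k\,\overline{\Theta_k}\,dy$; using $\|U'-1\|_{L^\infty}\lesssim\delta_0$ and the comparison above, $\nu^{2/3}|k|^{-2/3}\widetilde T_{11}\le -c\,\mathrm{D}_{k,\tau\alpha}+C\delta_0\,\mathrm{D}_{k,\tau\alpha}+C\nu^{2/3}|k|^{4/3}\|\omega_k\|_{L^2}^2$, and the last term is $\lesssim(\nu^{1/3}|k|^{2/3}\|\omega_k\|^2)^{1/2}\mathrm{D}_{k,\gamma}^{1/2}$ since $(\nu^{1/3}|k|^{2/3}\|\omega_k\|^2)^{1/2}\mathrm{D}_{k,\gamma}^{1/2}\ge\nu^{2/3}|k|^{4/3}\|\omega_k\|^2$ by $\mathrm{D}_{k,\gamma}\ge\nu|k|^2\|\omega_k\|^2$. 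For $\widetilde T_{12}$ and $\widetilde T_{13}$ one substitutes $\partial_y\omega_k=\Delta_k\Theta_k$ and integrates by parts twice in each variable---again boundary-free because $\Theta_k(\pm1)=0$ and $G_k(\cdot,\pm1)\equiv0$---reproducing verbatim the $L^2_{y,y'}$ kernel estimates for $U''(y)(y-y')G_k$ and $r(y,y')G_k$ from \eqref{r_fn} and the proof of Lemma~\ref{lem:LinTau}, to get $\widetilde T_{12},\widetilde T_{13}\lesssim\delta_0|k|^2\|\grad_k\Theta_k\|_{L^2}^2\lesssim\delta_0\,(\,|k|^2\|\grad_k\partial_y\phi_k\|_{L^2}^2+|k|^2\|\omega_k\|_{L^2}^2\,)$, hence $\nu^{2/3}|k|^{-2/3}(\widetilde T_{12}+\widetilde T_{13})\lesssim\delta_0\,\mathrm{D}_{k,\tau\alpha}+\delta_0(\nu^{1/3}|k|^{2/3}\|\omega_k\|^2)^{1/2}\mathrm{D}_{k,\gamma}^{1/2}$. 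Summing, multiplying by $2\nu^{2/3}|k|^{-2/3}$, and taking $\delta_0$ small to absorb the $\delta_0\mathrm{D}_{k,\tau\alpha}$ errors into $\tfrac1{12}\mathrm{D}_{k,\tau\alpha}$ finishes the proof.

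The main obstacle is precisely the absence of homogeneous Dirichlet conditions for $\partial_y\omega_k$ and $\partial_y\phi_k$, which in Lemma~\ref{lem:LinTau} were exactly what made every integration by parts boundary-term-free. The resolution is threefold: (i) any integration by parts carried out \emph{against} $\mathfrak{J}_k$ has a vanishing endpoint because $G_k(\pm1,\cdot)\equiv0$ (used for $\widetilde T_3$); (ii) in all manipulations that are nonlinear in $G_k$ one works with $\Theta_k=\Delta_k^{-1}\partial_y\omega_k$, not $\partial_y\phi_k$, since $\Theta_k$ does satisfy homogeneous Dirichlet conditions (used for $\widetilde T_{11}$, $\widetilde T_{12}$, $\widetilde T_{13}$); and (iii) the discrepancy $h_k=\partial_y\phi_k-\Theta_k$ is quantified, via the explicit Green's function, by $\|\grad_k h_k\|_{L^2}\lesssim|k|^{1/2}|\partial_y\phi_k(\pm1)|\lesssim\|\omega_k\|_{L^2}$, which lands exactly in the $(\nu^{1/3}|k|^{2/3}\|\omega_k\|^2)^{1/2}\mathrm{D}_{k,\gamma}^{1/2}$ term allowed on the right-hand side (with the exponents matching because $(\nu^{1/3}|k|^{2/3}\|\omega_k\|^2)^{1/2}(\nu|k|^2\|\omega_k\|^2)^{1/2}=\nu^{2/3}|k|^{4/3}\|\omega_k\|^2$).
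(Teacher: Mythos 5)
Your proof is correct and follows the same basic strategy as the paper (differentiate the equation, run the Lemma~\ref{lem:LinTau} machinery on the $-ikU\partial_y\omega_k$ term, bound the lower-order terms crudely using $\|\mathfrak{J}_k\|_{L^2\to L^2}\lesssim1$ and the extra $\nu^{2/3}|k|^{-2/3}$ weight). The organizational difference is real but minor: the paper keeps the non-local term $-ikU''\Delta_k^{-1}[\partial_y\omega_k]$ on the left so that the differentiated equation has identical structure to \eqref{NSchannel} with ``stream function'' $\Delta_k^{-1}[\partial_y\omega_k]$, pushing the lower-order pieces into the explicit commutators $\mathcal{C}_{k,1},\mathcal{C}_{k,2},\mathcal{C}_{k,3}$, whereas you keep the full $ik\,\partial_y(U''\phi_k)$ on the right and observe that the $\nu^{2/3}$ prefactor makes the Lemma~\ref{lem:LinTau}-style integration by parts unnecessary for that piece. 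Your treatment of the boundary issue is an actual improvement over what is written in the paper: the paper dismisses the Neumann-versus-Dirichlet discrepancy with ``one can check that the contributions from the left-hand side behave similarly,'' but this requires exactly the comparison you supply --- introducing $\Theta_k=\Delta_k^{-1}[\partial_y\omega_k]$ (which satisfies homogeneous Dirichlet conditions, so all integrations by parts against $G_k$ are boundary-free), running the $T_{11},T_{12},T_{13}$ analysis with $\Theta_k$, and then converting $\|\grad_k\Theta_k\|_{L^2}^2$ to $\|\grad_k\partial_y\phi_k\|_{L^2}^2$ up to $\|\omega_k\|_{L^2}^2$ via the trace estimate $|\partial_y\phi_k(\pm1)|\lesssim|k|^{-1/2}\|\omega_k\|_{L^2}$ on the explicit Green's function. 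That error $\nu^{2/3}|k|^{4/3}\|\omega_k\|_{L^2}^2$ is precisely $(\nu^{1/3}|k|^{2/3}\|\omega_k\|_{L^2}^2)^{1/2}\mathrm{D}_{k,\gamma}^{1/2}$, which matches the right-hand side of the lemma, so the numerology closes. In short: same approach, same final estimate, but you fill in a genuine gap the paper leaves implicit.
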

\begin{proof}
Taking the derivative of \eqref{NSchannel}, we have
\begin{align*}
\left(\partial_t  + ikU - U'' ik\Delta_k^{-1} - \nu \Delta_k\right)\partial_y\omega_k & = -ikU' \omega_k + ik U''' \Delta_k^{-1}\omega_k+\siming{ikU''[\pa_y, \de_k^{-1}]\omega_k} \\
& =: \mathcal{C}_{k,1} + \mathcal{C}_{k,2}+\mathcal{C}_{k,3}. 
\end{align*}
It is important to note however, that the boundary conditions are different. In particular, $\partial_y \omega_k$ has \emph{Neumann} boundary conditions $\partial_{yy} \omega_k|_{y = \pm 1} = 0$.  
However, one can check that the contributions from the left-hand side behave similarly to the case of $\bb T_\tau$, and therefore we obtain 
\begin{align*} \abs{k}^{-2/3}&  \frac{d}{dt} \Re \langle (\nu^{1/3} \partial_y) \omega_k, \mathfrak{J}_k[(\nu^{1/3} \partial_y)\omega_k] \rangle + \frac{1}{8}\mathrm{D}_{k,\tau\alpha}  \lesssim \mathrm{D}_{k,\alpha}   + \sum_{j=1}^3\abs{k}^{-2/3} \nu^{2/3}\Re \brak{ \partial_y \omega_k, \mathfrak{J}_k[\mathcal{C}_{k,j}]}.  
\end{align*}
With straightforward estimates we have 
\begin{align*}
\norm{\mathcal{C}_{k,1}}_{L^2} + \norm{\mathcal{C}_{k,2}}_{L^2} \lesssim \abs{k} \norm{\omega_k}_{L^2}.
\end{align*}
\siming{To estimate the commutator $\mathcal{C}_{k,3}$, we explicitly write down the expression,
 \begin{align*}\norm{\mathcal{C}_{k,3}}_{L^2}\leq& |k|\|U''\|_{L^\infty}\lf\|\int_{-1}^1 \lf(\pa_y G_k(y,y')\omega_k(y')- G_k(y,y')\pa_{y'}\omega_k(y')\rg)dy'\rg\|_{L_y^2}\\
 \leq& |k|\|U''\|_{L^\infty}\lf\|\int_{-1}^1 \lf(\pa_y G_k(y,y')+ \pa_{y'}G_k(y,y')\rg)\omega_k(y')dy'\rg\|_{L_y^2}\\
 \lesssim &\|U''\|_{L^\infty}\||k|\omega_k\|_{L^2}\lesssim\delta_0\||k|\omega_k\|_{L^2}.
\end{align*} }
These estimates imply (using Lemma \ref{lem:BoundI}), 
\begin{align*}
\abs{k}^{-2/3}  \frac{d}{dt} \Re \langle (\nu^{1/3} \partial_y) \omega_k, \mathfrak{J}_k[(\nu^{1/3} \partial_y)\omega_k] \rangle + \frac{1}{8} \mathrm{D}_{k,\tau\alpha} & \lesssim \mathrm{D}_{k,\alpha} +  \nu^{2/3}\abs{k}^{1/3}\norm{\omega_k}_{L^2} \norm{\partial_y\omega_k}_{L^2} \\ 
& \lesssim \mathrm{D}_{k,\alpha} + \left(\nu^{1/3} |k|^{2/3} \norm{\omega_k}_{L^2}^2 \right)^{1/2} \mathrm{D}_{k,\gamma}^{1/2}, 
\end{align*}
which completes the lemma. 
\end{proof}

\subsection{Completing the Linearized Estimate} \label{sec:FinLin}
Putting together Lemmas \ref{lem:LinGamma}, \ref{lem:LinAlpha}, \ref{lem:LinBeta}, \ref{lem:LinTau}, and \ref{lem:LinTauAlpha} we obtain for some universal constant $K_0$, 
\begin{align*}
\frac{d}{dt}E_k + 2c_\beta \nu^{1/3} |k|^{2/3} \norm{\siming{\sqrt{U'}}\omega_k}_{L^2}^2 + \mathrm{D}_{k,\gamma} + c_\alpha\mathrm{D}_{k,\alpha} + \frac{c_\tau}{8} \mathrm{D}_{k,\tau} + \frac{c_\alpha c_\tau}{12} \mathrm{D}_{k,\tau \alpha} \\ 
& \hspace{-8cm} \leq K_0\delta_0 \mathrm{D}_{k,\tau} + c_\alpha K_0 \delta_0 \mathrm{D}_{k,\gamma} +  c_\alpha K_0\delta_0 \nu^{1/3} |k|^{2/3} \norm{\omega_k}_{L^2}^2 + K_0 c_\beta \mathrm{D}_{k,\gamma}^{1/2}\mathrm{D}_{k,\alpha}^{1/2} \\ & \hspace{-8cm} \quad + K_0 c_\tau \mathrm{D}_{k,\gamma} + K_0 c_\tau c_\alpha \mathrm{D}_{k,\alpha} + K_0 c_\tau c_\alpha \left(\nu^{1/3} |k|^{2/3} \norm{\omega_k}_{L^2}^2 \right)^{1/2} \mathrm{D}_{k,\gamma}^{1/2}. 
\end{align*}
Therefore, it follows that if 
\begin{align*}
c_\tau & < \frac{1}{32 K_0}, \quad
K_0 \delta_0  < \frac{c_\tau}{32}, \quad
c_\alpha  < \min\lf\{\frac{1}{8 K_0 \delta_0}\siming{,1}\rg\}, \quad 
\frac{c_\alpha}{c_\beta}  < \frac{1}{25 K_0}, \quad
\frac{c^2_\beta}{2c_\alpha}  < \frac{1}{25 K_0^2}. 
\end{align*}
then we have (assuming without loss of generality that $K_0 \geq 32$). 
\begin{align*}
K_0 c_\beta \mathrm{D}_{k,\gamma}^{1/2}\mathrm{D}_{k,\alpha}^{1/2} & \leq \frac{1}{10}\mathrm{D}_{k,\gamma} +  \frac{5}{2} K_0^2 c_\beta^2 \mathrm{D}_{k,\alpha} \leq \frac{1}{10}\mathrm{D}_{k,\gamma} +  \siming{\frac{1}{5}} c_\alpha \mathrm{D}_{k,\alpha}, \\
K_0 c_\tau c_\alpha \left(\nu^{1/3} |k|^{2/3} \norm{\omega_k}_{L^2}^2 \right)^{1/2} \mathrm{D}_{k,\gamma}^{1/2} & \siming{\leq \frac{c_\beta}{10} \nu^{1/3} |k|^{2/3} \norm{\omega_k}_{L^2}^2 + \frac{5}{2} \frac{c_\tau^2 c_\alpha^2K_0^2}{ c_\beta} \mathrm{D}_{k,\gamma} \leq \frac{c_\beta}{10} \nu^{1/3} |k|^{2/3} \norm{\omega_k}_{L^2}^2 + \frac{1}{10} \mathrm{D}_{k,\gamma}.} 
\end{align*}
then, we have the monotonicity estimate 
\begin{align*}
\frac{d}{dt} E_k + \siming{ \frac{1}{36}}\mathrm{D}_k +\siming{c(c_\tau,c_\al, c_\beta)}\nu^{1/3} |k|^{2/3} E_k \leq 0, 
\end{align*}
where $c(c_\tau,c_\al, c_\beta)>0$.
This in particular, also implies the stated exponential decay estimate. 
We also impose the condition
\begin{align*}
c_\beta^2 & \leq \frac{1}{4}c_\alpha + \frac{1}{4}(1 - c_\tau)
\end{align*}
to ensure the $E_k$ is coercive. 
While the procedure for setting the coefficients is by-now classical, we record an example for clarity (assuming without loss of generality that $K_0 > 32\siming{(1+\sup_{k\nq 0}\|\mathfrak{J}_k\|_{L^2\rightarrow L^2})}$), 
\begin{align*}
c_\tau  & = \frac{1}{64 K_0}, \qquad
\delta_0  < \frac{1}{(64 K_0)^2}, \qquad
c_\alpha  = K_0^{-9}, \qquad
c_\beta  = K_0^{-6}.
\end{align*}

\section{Nonlinear Estimates} \label{sec:NL}
\subsection{Some Preliminary Lemmas} 

\begin{lemma} \label{lem:d2phi0}
We have the following estimates 
\begin{align*}
\siming{\norm{\partial_y\phi_0}_{L^\infty} \lesssim \mathcal{E}_0^{1/2}e^{-\delta\nu_\ast t}, }\quad \
\norm{\partial_y^2\phi_0}_{L^\infty} \lesssim \nu^{-1/6} \mathcal{E}_0^{1/2}e^{-\delta_\ast\nu t}. 
\end{align*}
Here $\mathcal{E}_0$ is defined in \eqref{mcl_E_0}.
\end{lemma}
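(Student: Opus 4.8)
The plan is to bound $\partial_y \phi_0$ and $\partial_y^2 \phi_0$ in $L^\infty_y$ by standard one-dimensional Sobolev/interpolation inequalities on the interval $[-1,1]$ and then express the resulting $L^2_y$-norms of $\omega_0$ and its derivatives in terms of $\mathcal{E}_0$. Recall $\phi_0$ solves $\partial_{yy}\phi_0 = \omega_0$ with $\phi_0(\pm 1)=0$, so $\partial_y\phi_0$ and $\partial_y^2\phi_0 = \omega_0$ are controlled by $\omega_0$ directly; in particular elliptic regularity on the interval gives $\|\partial_y \phi_0\|_{H^1_y} \lesssim \|\omega_0\|_{L^2_y}$ (using the Poincaré/Dirichlet structure), hence $\|\partial_y\phi_0\|_{L^\infty_y}\lesssim \|\partial_y\phi_0\|_{H^1_y}\lesssim\|\omega_0\|_{L^2_y}$ by the 1D embedding $H^1([-1,1])\embeds L^\infty$. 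Comparing with the definition \eqref{mcl_E_0}, $\|\omega_0\|_{L^2_y}^2 \le e^{-2\delta_\ast\nu t}\mathcal{E}_0$, which yields the first bound (the exponent $\nu_\ast$ in the statement should read $\delta_\ast \nu$).

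For the second bound, write $\partial_y^2\phi_0 = \omega_0$ and apply the Gagliardo–Nirenberg–type interpolation inequality on $[-1,1]$,
\begin{align*}
\|\omega_0\|_{L^\infty_y} \lesssim \|\omega_0\|_{L^2_y}^{1/2}\|\partial_y\omega_0\|_{L^2_y}^{1/2} + \|\omega_0\|_{L^2_y},
\end{align*}
valid for $\omega_0 \in H^1([-1,1])$ (and here $\omega_0$ even vanishes at the endpoints, so the lower-order term can be absorbed). From \eqref{mcl_E_0} we have $\|\omega_0\|_{L^2_y} \lesssim e^{-\delta_\ast\nu t}\mathcal{E}_0^{1/2}$, while $c_\alpha\|\nu^{1/3}\partial_y\omega_0\|_{L^2_y}^2 \le e^{-2\delta_\ast\nu t}\mathcal{E}_0$ gives $\|\partial_y\omega_0\|_{L^2_y} \lesssim \nu^{-1/3}e^{-\delta_\ast\nu t}\mathcal{E}_0^{1/2}$. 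Inserting these into the interpolation inequality produces
\begin{align*}
\|\partial_y^2\phi_0\|_{L^\infty_y} = \|\omega_0\|_{L^\infty_y} \lesssim \big(e^{-\delta_\ast\nu t}\mathcal{E}_0^{1/2}\big)^{1/2}\big(\nu^{-1/3}e^{-\delta_\ast\nu t}\mathcal{E}_0^{1/2}\big)^{1/2} = \nu^{-1/6}e^{-\delta_\ast\nu t}\mathcal{E}_0^{1/2},
\end{align*}
which is exactly the claimed estimate.

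This argument is entirely routine; there is no real obstacle. The only points requiring minor care are: (i) justifying the elliptic estimate $\|\partial_y\phi_0\|_{H^1_y}\lesssim\|\omega_0\|_{L^2_y}$ with the homogeneous Dirichlet boundary condition (a one-line integration by parts plus Poincaré), and (ii) making sure the time weights $e^{-\delta_\ast\nu t}$ are tracked consistently between the two summands of $\mathcal{E}_0$, which both carry the same weight, so no loss occurs. The $\nu$-powers are then simply bookkeeping: the $\nu^{1/3}$ attached to $\partial_y$ inside $\mathcal{E}_0$ costs $\nu^{-1/3}$ when extracting $\|\partial_y\omega_0\|_{L^2_y}$, and the square-root interpolation halves it to the stated $\nu^{-1/6}$.
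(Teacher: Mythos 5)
Your argument is correct and, for the second bound, is verbatim the paper's (Gagliardo--Nirenberg on $\omega_0=\partial_y^2\phi_0$, then read off the $\nu^{-1/3}$ from the $c_\alpha\|\nu^{1/3}\partial_y\omega_0\|_{L^2}^2$ term in $\mathcal{E}_0$). For the first bound you take a slightly different but equivalent route: you go through $\|\partial_y\phi_0\|_{H^1_y}\lesssim\|\omega_0\|_{L^2}$ (via the one-line integration-by-parts/Poincar\'e argument you sketch) and then $H^1([-1,1])\hookrightarrow L^\infty$, whereas the paper writes $\partial_y\phi_0(y)=\int_{-1}^1\partial_y G_0(y,y')\,\omega_0(y')\,dy'$ and uses that $\partial_y G_0$ is uniformly bounded (since $G_0$ is piecewise affine), getting $\|\partial_y\phi_0\|_{L^\infty}\lesssim\|\omega_0\|_{L^1}\lesssim\|\omega_0\|_{L^2}$. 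Both are one-step elementary arguments landing on the same bound; the Green's-function route is marginally sharper ($L^1$ in place of $L^2$) but that sharpness is not used here. You also correctly flag the typo in the first exponent, which should read $e^{-\delta_\ast\nu t}$.
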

\begin{proof}
By Gagliardo-Nirenberg-Sobolev inequality,  and elliptic regularity, we obtain
\begin{align*}
&\siming{
\norm{\partial_y\phi_0}_{L^\infty} \lesssim\lf\|\int_{-1}^1 \pa_y G_0(\cdot,y')\omega_0(t,y')dy'\rg\|_{L^\infty}\lesssim \norm{\omega_0}_{L^1}   \lesssim \mathcal{E}_0^{1/2}e^{-\delta_\ast\nu t},}\\\quad\
&\norm{\partial_y^2\phi_0}_{L^\infty} \lesssim \norm{\partial_y \omega_0}_{L^2}^{1/2}\norm{\omega_0}_{L^2}^{1/2} \lesssim \nu^{-1/6} \mathcal{E}_0^{1/2}e^{-\delta_\ast \nu t}. 
\end{align*}
Here $G_0(y,y')$ is the Green's kernel of $\de_y$ with Dirichlet boundary condition.
\siming{Since $G_0(y,y')$ is of the form $ a|y-y'|+by+cy'$ for some constants, we have the bound $\|\pa_y G_0\|_\infty\leq C. $}\end{proof} 

\begin{lemma} \label{lem:komegaD}
The following estimates hold
\begin{align*}
\sum_{k \neq 0} \abs{k}^{1+2m} \norm{\omega_k}_{L^2}^2 \lesssim& \nu^{-1/2} \mathcal{D}_\beta^{3/4} \mathcal{D}_\gamma^{1/4}e^{-2\dnt}, \\
\sum_{k\siming{\neq 0}} \abs{k}^{2m+4/3} \norm{\omega_k}_{L^2}^2 \lesssim& \nu^{-2/3} \mathcal{D}_{\gamma}^{1/2} \mathcal{D}_{\beta}^{1/2}e^{-2\dnt}. 
\end{align*}
\end{lemma}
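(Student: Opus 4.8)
The plan is to prove both estimates by interpolating the $x$-frequency weights between the two dissipation quantities $\mathcal{D}_\gamma$ (which controls $\nu|k|^2\|\omega_k\|_{L^2}^2$ and $\nu\|\partial_y\omega_k\|_{L^2}^2$, up to the time weight and $|k|^{2m}$) and $\mathcal{D}_\beta$ (which controls $\nu^{1/3}|k|^{2/3}\|\omega_k\|_{L^2}^2$, up to the same factors). Recall from \eqref{def:Dks}--\eqref{def:Dkscl} that, modulo the common factor $e^{2\delta_\ast\nu^{1/3}t}|k|^{2m}$, we have $\mathrm{D}_{k,\gamma}\gtrsim\nu|k|^2\|\omega_k\|_{L^2}^2$ and $\mathrm{D}_{k,\beta}=\nu^{1/3}|k|^{2/3}\|\omega_k\|_{L^2}^2$. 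Therefore, for a fixed $k$, writing $a_k:=e^{2\delta_\ast\nu^{1/3}t}|k|^{2m}\|\omega_k\|_{L^2}^2$, one has $\nu|k|^2 a_k\lesssim \mathcal{D}_{k,\gamma}$ and $\nu^{1/3}|k|^{2/3}a_k=\mathcal{D}_{k,\beta}$ (using $\mathcal{D}_{(\ast)}$ notation for the summed-in-$k$ versions). The target left-hand sides are $\sum_k e^{2\dnt}\,a_k\cdot|k|^{1}\cdot e^{-2\dnt}$ — wait, more precisely $\sum_k |k|^{1+2m}\|\omega_k\|_{L^2}^2 = e^{-2\dnt}\sum_k |k|\, a_k$ and similarly $\sum_k |k|^{2m+4/3}\|\omega_k\|_{L^2}^2 = e^{-2\dnt}\sum_k|k|^{4/3}a_k$.

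For the first estimate, I want to bound $\sum_k|k| a_k$. Write $|k|=(\nu|k|^2)^{\theta}(\nu^{1/3}|k|^{2/3})^{1-\theta}\nu^{-\mu}$ for suitable exponents: matching the power of $|k|$ gives $2\theta+\tfrac23(1-\theta)=1$, i.e. $\tfrac43\theta=\tfrac13$, so $\theta=\tfrac14$; matching the power of $\nu$ gives $\mu=\theta+\tfrac13(1-\theta)=\tfrac14+\tfrac14=\tfrac12$. Hence $|k|\, a_k = (\nu|k|^2 a_k)^{1/4}(\nu^{1/3}|k|^{2/3}a_k)^{3/4}\nu^{-1/2}$, and summing over $k$ with Hölder's inequality (exponents $4$ and $4/3$) gives $\sum_k|k|a_k\lesssim \nu^{-1/2}\big(\sum_k\nu|k|^2a_k\big)^{1/4}\big(\sum_k\nu^{1/3}|k|^{2/3}a_k\big)^{3/4}\lesssim \nu^{-1/2}\mathcal{D}_\gamma^{1/4}\mathcal{D}_\beta^{3/4}$. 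Multiplying back by $e^{-2\dnt}$ yields exactly the first claimed bound. For the second estimate, I repeat with $|k|^{4/3}=(\nu|k|^2)^\theta(\nu^{1/3}|k|^{2/3})^{1-\theta}\nu^{-\mu}$: now $2\theta+\tfrac23(1-\theta)=\tfrac43$ gives $\tfrac43\theta=\tfrac23$, so $\theta=\tfrac12$, and $\mu=\tfrac12+\tfrac13\cdot\tfrac12=\tfrac23$. Hölder with exponents $2$ and $2$ then gives $\sum_k|k|^{4/3}a_k\lesssim\nu^{-2/3}\mathcal{D}_\gamma^{1/2}\mathcal{D}_\beta^{1/2}$, and multiplying by $e^{-2\dnt}$ delivers the second bound.

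The only subtlety — and the step to be careful about rather than a genuine obstacle — is bookkeeping the time weight: the quantities $\mathcal{D}_\gamma,\mathcal{D}_\beta$ as defined in \eqref{def:Dkscl} already carry the factor $e^{2\delta_\ast\nu^{1/3}t}$, so the factor $a_k$ above already includes it and the final answer must carry the compensating $e^{-2\dnt}$, which is precisely what appears on the right-hand sides of the statement. One should also note that since we only sum over $k\neq 0$ we have $|k|\geq 1$, so no low-frequency issue arises, and the interpolation identity $|k|^{s}=(\nu|k|^2)^{\theta}(\nu^{1/3}|k|^{2/3})^{1-\theta}\nu^{-\mu}$ is an exact pointwise identity in $k$ for each fixed $k$, valid for all $\nu$. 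Thus both estimates follow from a two-line Hölder argument once the exponents $\theta$ and $\mu$ are identified as above.
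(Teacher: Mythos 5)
Your proof is correct, and it is essentially the same interpolation argument as the paper's: you both express the extra power of $|k|$ as a convex combination of the weights $\nu|k|^2$ (from $\mathcal{D}_\gamma$) and $\nu^{1/3}|k|^{2/3}$ (from $\mathcal{D}_\beta$), solve for the exponents $(\theta,\mu)=(\tfrac14,\tfrac12)$ resp. $(\tfrac12,\tfrac23)$, and close with H\"older in $k$. The only difference is cosmetic: the paper splits $\|\omega_k\|_{L^2}^2$ into a $3/2$ power and a $1/2$ power before applying H\"older, whereas you leave $a_k$ intact and split the $|k|$ weight via the identity $|k|^s=(\nu|k|^2)^\theta(\nu^{1/3}|k|^{2/3})^{1-\theta}\nu^{-\mu}$; the numerology and the resulting bound are identical.
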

\begin{proof}
Both estimates are proved using the same interpolation trick; let us just show the numerology for the first:
\begin{align*}
\sum_{k \neq 0} \nu^{1/2} \abs{k}^{1 + 2m} \norm{\omega_k}_{L^2}^2 &= \sum_{k \neq 0}  (\nu^{1/4}\abs{k}^{1/2 + \frac32 m})\norm{\omega_k}_{L^2}^{3/2} (\nu^{1/4}\abs{k}^{1/2+\frac12 m})\norm{\omega_k}_{L^2}^{1/2} \\
& \lesssim \left(\sum_{k \neq 0} \nu^{1/3} \abs{k}^{2/3+2m} \norm{\omega_k}_{L^2}^2 \right)^{3/4}\left(\sum_{k \neq 0} \nu \abs{k}^{2+2m} \norm{\omega_k}_{L^2}^2 \right)^{1/4} \\ 
& \lesssim \mathcal{D}_\beta^{3/4} \mathcal{D}_\gamma^{1/4}, 
\end{align*} 
which is the desired result. 

\end{proof}

\begin{lemma}\label{lem:dxphiL2t}
There hold the following estimates: 
\begin{align*}
\sum_{k\neq 0} |k|^{2m} \abs{k}^{3\mh{-\delta}} \norm{ \phi_k}_{L^\infty}^2 \lesssim \mathcal{D}_\tau e^{-2\dnt}. 
\end{align*}
Similarly,
\begin{align*}
\sum_{k\neq 0} |k|^{2m} \abs{k}^{3\mh{-\delta}} \norm{ (\nu^{1/3}\abs{k}^{-1/3}\partial_y) \phi_{k}}_{L^\infty}^2 & \lesssim \mathcal{D}_{\tau \alpha}e^{-2\dnt}. 
\end{align*}
Finally, for $m>2/3$, 
\begin{align}
\sum_{k\neq 0} \norm{\partial_y^2 \phi_{k}}_{L^\infty} & \lesssim \nu^{-1/6} \mathcal{E}^{1/2}_\nq e^{-\dnt}. \label{ineq:d2phiAlpha}
\end{align}
\end{lemma}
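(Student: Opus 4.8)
The plan is to establish all three bounds mode-by-mode, using only the one–dimensional Agmon/Gagliardo--Nirenberg inequality $\|f\|_{L^\infty([-1,1])}^2\lesssim\|f\|_{L^2}\|\partial_yf\|_{L^2}+\|f\|_{L^2}^2$ (the last term being absent when $f$ vanishes at $y=\pm1$), together with the elementary elliptic inequalities obtained by integrating by parts against the homogeneous Dirichlet condition for $\Delta_k\phi_k=\omega_k$, namely $|k|\,\|\phi_k\|_{L^2}\le\|\nabla_k\phi_k\|_{L^2}$, $|k|\,\|\partial_y\phi_k\|_{L^2}\le\|\nabla_k\partial_y\phi_k\|_{L^2}$, and $\|\partial_{yy}\phi_k\|_{L^2}\le\|\nabla_k\partial_y\phi_k\|_{L^2}$. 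The first two estimates are ``$\ell^2_k$-type'': the weight $|k|^{2m}$ already sits on both sides, so no summation in $k$ is needed and each reduces mode-by-mode to these ingredients. The third estimate is ``$\ell^1_k$-type'' and is the one that forces the restriction $m>2/3$.

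For the first estimate, since $\phi_k$ vanishes at $\pm1$, Agmon and the elliptic inequalities give $\|\phi_k\|_{L^\infty}^2\lesssim\|\phi_k\|_{L^2}\|\partial_y\phi_k\|_{L^2}\lesssim|k|^{-1}\|\nabla_k\phi_k\|_{L^2}^2$, hence $|k|^{3}\|\phi_k\|_{L^\infty}^2\lesssim|k|^{2}\|\nabla_k\phi_k\|_{L^2}^2=\mathrm{D}_{k,\tau}$; multiplying by $|k|^{2m}$ and summing over $k\neq0$ yields $e^{-2\dnt}\mathcal{D}_\tau$. For the second estimate one applies the same argument to $\partial_y\phi_k$, which need not vanish at the boundary, so the lower-order remainder in Agmon must be kept but is harmless: $\|\partial_y\phi_k\|_{L^\infty}^2\lesssim\|\partial_y\phi_k\|_{L^2}\|\partial_{yy}\phi_k\|_{L^2}+\|\partial_y\phi_k\|_{L^2}^2\lesssim|k|^{-1}\|\nabla_k\partial_y\phi_k\|_{L^2}^2$, so that $|k|^{3}\nu^{2/3}|k|^{-2/3}\|\partial_y\phi_k\|_{L^\infty}^2\lesssim\nu^{2/3}|k|^{4/3}\|\nabla_k\partial_y\phi_k\|_{L^2}^2=\mathrm{D}_{k,\tau\alpha}$, and again one multiplies by $|k|^{2m}$ and sums.

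For the third estimate, write $\partial_{yy}\phi_k=\omega_k+|k|^2\phi_k$ and observe that $|k|^2\|\phi_k\|_{L^\infty}\lesssim\|\omega_k\|_{L^\infty}$: this is immediate either from the maximum principle for $(-\partial_{yy}+|k|^2)$ with Dirichlet data, or from $\|G_k\|_{L^\infty_yL^1_{y'}}\lesssim|k|^{-2}$, which is read off from the explicit formula \eqref{G_k}. Thus $\|\partial_{yy}\phi_k\|_{L^\infty}\lesssim\|\omega_k\|_{L^\infty}$, and since $\omega_k$ vanishes at $\pm1$, Agmon together with the coercivity $E_k[\omega_k]\approx\|\omega_k\|_{L^2}^2+\nu^{2/3}|k|^{-2/3}\|\partial_y\omega_k\|_{L^2}^2$ gives
\[
\|\omega_k\|_{L^\infty}^2\lesssim\|\omega_k\|_{L^2}\|\partial_y\omega_k\|_{L^2}\lesssim\nu^{-1/3}|k|^{1/3}\,E_k[\omega_k].
\]
Hence $\|\partial_{yy}\phi_k\|_{L^\infty}\lesssim\nu^{-1/6}|k|^{1/6}E_k[\omega_k]^{1/2}=\nu^{-1/6}e^{-\dnt}|k|^{1/6-m}\big(e^{2\dnt}|k|^{2m}E_k[\omega_k]\big)^{1/2}$, and summing over $k\neq0$ and applying Cauchy--Schwarz,
\[
\sum_{k\neq0}\|\partial_{yy}\phi_k\|_{L^\infty}\lesssim\nu^{-1/6}e^{-\dnt}\Big(\sum_{k\neq0}|k|^{\frac13-2m}\Big)^{1/2}\mathcal{E}_\nq^{1/2},
\]
where the $k$-series converges precisely when $\tfrac13-2m<-1$, i.e. $m>\tfrac23$, which is the standing hypothesis.

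The genuinely non-routine step — and the source of the hypothesis $m>2/3$ — is the treatment of $|k|^2\|\phi_k\|_{L^\infty}$ in the third estimate. The ``lazy'' bound $|k|^2\|\phi_k\|_{L^\infty}\lesssim|k|^{1/2}\|\omega_k\|_{L^2}\lesssim|k|^{1/2}E_k[\omega_k]^{1/2}$ would force the series $\sum_{k\neq0}|k|^{1-2m}$, requiring $m>1$. Passing instead through $\|\omega_k\|_{L^\infty}$ — and hence, via Agmon, through $\|\partial_y\omega_k\|_{L^2}$, which is only controlled at the level $\nu^{2/3}|k|^{-2/3}\|\partial_y\omega_k\|_{L^2}^2\lesssim E_k[\omega_k]$ and therefore introduces the $\nu^{-1/6}$ loss — recovers the extra half power of $|k|$ needed to bring the summability threshold down to $m>2/3$. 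Verifying the $|k|^{-2}$ weighted-$L^1$ bound on $G_k$ (equivalently, the comparison-principle inequality $|k|^2\|\phi_k\|_{L^\infty}\le\|\omega_k\|_{L^\infty}$) is the only place where the explicit structure of the Green's function enters.
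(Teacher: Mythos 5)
Your proof is correct and, for the first two estimates, is essentially identical to the paper's: one-dimensional Gagliardo--Nirenberg followed by the $L^2$ elliptic inequalities for $\nabla_k$, done mode-by-mode.

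For the third estimate your route is a mild and slightly cleaner variant of the paper's. The paper bounds $\|\partial_y^2\phi_k\|_{L^\infty}\le\|\omega_k\|_{L^\infty}+\||k|^2\phi_k\|_{L^\infty}$ and then applies Gagliardo--Nirenberg to each of the two pieces separately, using the $L^2$ elliptic bounds $\||k|^2\phi_k\|_{L^2}\lesssim\|\omega_k\|_{L^2}$, $\||k|^2\partial_y\phi_k\|_{L^2}\lesssim\|\partial_y\omega_k\|_{L^2}$ on the second piece. You instead first absorb the second piece pointwise, $|k|^2\|\phi_k\|_{L^\infty}\lesssim\|\omega_k\|_{L^\infty}$, via the comparison principle (equivalently $\|G_k\|_{L^\infty_yL^1_{y'}}\lesssim|k|^{-2}$), and then apply Gagliardo--Nirenberg once to $\omega_k$. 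The two computations produce the same numerology, the same $\nu^{-1/6}$ loss, and the same $|k|^{1/3-2m}$ tail whose summability enforces $m>2/3$; your version just trades two GNS applications plus $L^2$ elliptic regularity for a single GNS application plus an $L^\infty\to L^\infty$ resolvent bound. Your diagnosis of where the hypothesis $m>2/3$ comes from — passing through $\|\partial_y\omega_k\|_{L^2}$ at the price of $\nu^{-1/6}$ rather than the crude $|k|^{1/2}\|\omega_k\|_{L^2}$ — matches the paper's mechanism precisely.
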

\begin{proof}
By Gagliardo-Nirenberg-Sobolev and Cauchy-Schwarz, 
\begin{align*}
\sum_{k\nq0} |k|^{2m} \abs{k}^{3\mh{-\delta}} \norm{\phi_k}_{L^\infty}^2 \lesssim \sum_{k\nq0} |k|^{2m} \abs{k}\mh{^{1-\delta}} \norm{|k|^{1/2} \partial_y \phi_k}_{L^2} \norm{|k|^{3/2} \phi_k}_{L^2} \lesssim \mathcal{D}_{\tau}e^{- 2\dnt}.
\end{align*}
A similar argument applies to the second estimate as well. 

Finally, to prove \eqref{ineq:d2phiAlpha}, we apply elliptic regularity
\begin{align*}\||k|^2\phi_k\|_{L^2}\lesssim \|\omega_k\|_{L^2},\quad 
\|\pa_y |k|^2\phi_k\|_{L^2}\lesssim \|\pa_y \omega_k\|_{L^2},
\end{align*}
which are natural consequences of testing the elliptic equation in \eqref{NSchannel} by functions $\{\phi_k, \pa_y\phi_k\}$. Combining them with the Gagliardo-Nirenberg interpolation yields that 
\begin{align*}
\sum_{k\neq0} &\norm{\partial_y^2 \phi_{k}}_{L^\infty} \leq\sum_{k\neq0}(\|\omega_k\|_{L^\infty} +\norm{|k|^2 \phi_{k}}_{L^\infty})  \lesssim \sum_{k\neq0} ( \norm{\partial_y \omega_{k}}_{L^2}^{1/2}\norm{\omega_{k}}_{L^2}^{1/2} +\norm{|k|^2\partial_y \phi_{k}}_{L^2}^{1/2}\||k|^2\phi_k\|_{L^2}^{1/2})\\
 \lesssim& \nu^{-\frac{1}{6}}\sum_{k\neq 0}|k|^{-m+\frac{1}{6}}\lf(\nu^{\frac{1}{6}}\||k|^{m-\frac{1}{3}}\pa_y\omega_k\|_{L^2}^{1/2}\rg)\||k|^m\omega_k\|_{L^2}^{1/2}\\
 \lesssim& \nu^{-\frac{1}{6}} \bigg(\sum_{k\nq 0} \abs{k}^{-2m +\frac{1}{3}}\bigg)^{1/2}\bigg(\sum_{k\nq 0}\norm{\nu^{\frac{1}{3}} \abs{k}^{m-\frac{1}{3}}\pa_y \omega_{k}}_{L^2} \norm{|k|^m\omega_{k}}_{L^2}\bigg)^{1/2}\lesssim \nu^{-\frac{1}{6}}\mathcal{E}^{1/2}e^{-\dnt} . 
\end{align*}
Here, we have used the constraint $m > 2/3$ to guarantee that $|k|^{-2m+\frac{1}{3}}\in \ell^1_{k\nq 0}$. Hence, we have the result. 
\end{proof}

\subsection{Zero Frequency Energy}
Expanding the linear and nonlinear contributions of the zero frequency energy gives
\begin{align}\label{ddtE0}
\myr{\frac{1}{2}}\frac{d}{dt} \mathcal{E}_0 = \mathcal{L}_0 + \mathcal{NL}_0\siming{+\delta_\ast \nu \mathcal{E}_0}, 
\end{align}
where
\begin{align*}
\mathcal{L}_0 & := e^{2\delta_\ast \nu t}\Re \brak{ \omega_{0},  \nu \partial_{yy}\omega_0}  + c_{\alpha}  e^{2\delta_\ast \nu t}\nu^{2/3} \Re\brak{\p_y \omega_{0}, \nu \partial_{yyy}\omega_0} \\ 
\mathcal{NL}_0 & :=  -e^{2\delta_\ast \nu t}\Re \brak{  \omega_{0},  (u\cdot \grad \omega)_0 } \myr{-} c_{\alpha} e^{2\delta_\ast \nu t} \nu^{2/3}\Re \brak{ \p_y \omega_{0},  \p_y  (u \cdot \grad \omega)_0}. 
\end{align*}
The estimates of these contributions are summarized in the following lemma. 
\begin{lemma} \label{lem:NLzero}
There exists a universal constant $C$ (independent of $\nu$) such that 
\begin{align}\label{est:NLzero}
\mathcal{L}_0 + \mathcal{NL}_0 \siming{+\delta_\ast \nu \mathcal{E}_0}&\leq -\frac{1}{4}\mathcal{D}_0 - 3 \delta_\ast\nu  \mathcal{E}_0 + \frac{C\mathcal{E}^{1/2}}{\sqrt{\nu}} \mathcal{D}.
\end{align}
\end{lemma}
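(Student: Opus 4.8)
The plan is to handle the linear and nonlinear contributions separately. For the linear piece $\mathcal{L}_0$, I would integrate by parts in $y$, using that $\omega_0$ vanishes on the boundary (so that $\partial_y\omega_0$ has the Neumann condition $\partial_{yy}\omega_0|_{y=\pm1}=0$ coming from the equation), to obtain exactly $\mathcal{L}_0 = -e^{2\delta_\ast\nu t}\nu\|\partial_y\omega_0\|_{L^2}^2 - c_\alpha e^{2\delta_\ast\nu t}\nu^{2/3}\|\nu^{1/3}\partial_{yy}\omega_0\|_{L^2}^2 = -\mathcal{D}_0$. This leaves a surplus of $\frac34\mathcal{D}_0$; a part of it, together with the spectral gap of $-\partial_{yy}$ with Dirichlet conditions on $[-1,1]$ (Poincar\'e inequality, constant $\gtrsim 1$), absorbs the $-3\delta_\ast\nu\mathcal{E}_0$ term provided $\delta_\ast$ is small relative to universal constants. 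So it remains to show $\mathcal{NL}_0 \leq \frac14\mathcal{D}_0 + C\nu^{-1/2}\mathcal{E}^{1/2}\mathcal{D}$.

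For $\mathcal{NL}_0$, I would first use incompressibility to write $(u\cdot\grad\omega)_0 = \partial_y(u_2\omega)_0 = \partial_y\sum_{k\neq0}(u_{2,-k}\omega_k)$ (the zero mode of a transport term in divergence form; only the nonzero-mode interactions survive since $u_{2,0}=0$). For the first term in $\mathcal{NL}_0$, integrate by parts to move the $\partial_y$ onto $\omega_0$, giving $e^{2\delta_\ast\nu t}\Re\langle\partial_y\omega_0, (u_2\omega)_0\rangle$, then estimate $|(u_2\omega)_0|$ pointwise in $y$ by $\sum_{k\neq0}\|u_{2,-k}\|_{L^\infty_y}\|\omega_k\|_{L^2_y}$-type sums; here $u_{2,k}=-ik\phi_k$, so $\|u_{2,k}\|_{L^\infty}\lesssim|k|\|\phi_k\|_{L^\infty}$, which is controlled via Lemma \ref{lem:dxphiL2t} and Cauchy-Schwarz in $k$ (using $m>2/3$ to get $\ell^1$ summability of the appropriate power of $|k|$, exactly as in \eqref{ineq:d2phiAlpha}). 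Bounding $\|\partial_y\omega_0\|_{L^2}$ by $(\nu^{-1}\mathcal{D}_0)^{1/2}$ (modulo time weights) and splitting the remaining factors so that one lands in $\mathcal{D}^{1/2}$ and the leftover is $\mathcal{E}^{1/2}$, with the $\nu^{-1/2}$ coming from the mismatch between the $\nu$ in $\mathcal{D}_0$ and the $\nu^{1/3}$ time scale, yields the claimed form. For the second term, $-c_\alpha e^{2\delta_\ast\nu t}\nu^{2/3}\Re\langle\partial_y\omega_0, \partial_y(u\cdot\grad\omega)_0\rangle$, I would again write the nonlinearity in divergence form, integrate by parts to avoid putting three derivatives on any single factor (at most $\partial_{yy}\omega_0$, which is $\mathcal{D}_0^{1/2}$ up to the $\nu^{1/3}$ weight, paired against $\partial_y(u_2\omega)_0$ or $(u_2\omega)_0$), and distribute derivatives using the product rule: terms with $\partial_y u_{2,k} = -ik\partial_y\phi_k$ use the second estimate of Lemma \ref{lem:dxphiL2t}, terms with $\partial_y\omega_k$ use $\mathcal{D}_\beta/\mathcal{D}_\gamma$-type bounds via Lemma \ref{lem:komegaD}, and $\partial_{yy}\phi_k = \omega_k + k^2\phi_k$ is handled by \eqref{ineq:d2phiAlpha}. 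Summing all contributions and collecting powers of $\nu$ and $|k|$ gives the bound.

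The main obstacle I anticipate is the careful bookkeeping in the second nonlinear term: one must verify that every way of distributing the two $\partial_y$'s (after one integration by parts) and the $\nu^{2/3}$ weight across the factors $\partial_y\omega_0$, $u_{2,k}$, $\omega_k$ leaves a factor that is genuinely controlled by $\mathcal{D}^{1/2}$ (not merely $\mathcal{E}^{1/2}$) so that the final estimate is quadratic-over-$\mathcal{D}$ rather than just trilinear, and that the accumulated negative power of $\nu$ is no worse than $\nu^{-1/2}$. The worst case is when both derivatives and the favorable $\nu$-weight must be spent on $\omega_0$ (giving $\|\nu^{1/3}\partial_{yy}\omega_0\|$, i.e. $\mathcal{D}_0^{1/2}$), forcing the nonzero-mode factor $\|(u_2\omega)_0\|$ to supply the remaining $\mathcal{E}^{1/2}\mathcal{D}^{1/2}$ purely from $\mathcal{E}$-level quantities on $\phi_k$ together with a $\mathcal{D}$-level quantity on $\omega_k$; confirming this split closes, with the stated $\nu^{-1/2}$ loss and no regularity deficit (which is where the hypothesis $m>2/3$ and the anisotropic weights $(\nu^{1/3}\partial_y)$ in $E_k$ are used), is the crux. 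The absorption constants ($\frac14\mathcal{D}_0$, $3\delta_\ast\nu\mathcal{E}_0$) are then arranged by taking $\delta_\ast$ small depending only on universal constants.
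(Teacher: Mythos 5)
Your proposal is correct and follows essentially the same route as the paper: integrate by parts to cancel the linear part against $-\mathcal{D}_0$ (with Poincar\'e supplying the $-\delta_\ast\nu\mathcal{E}_0$ surplus), then write the nonlinear $0$-mode in divergence form $\partial_y\sum_{k\neq0}ik\phi_k\omega_{-k}$ (only nonzero modes contribute), integrate by parts once so the $\partial_y$'s land on $\omega_0$, and distribute the remaining derivative across the $\phi_k\omega_{-k}$ factors, using Gagliardo--Nirenberg/Lemma~\ref{lem:dxphiL2t} for the $L^\infty_y$ factors and $m>1/3$ (resp.\ $m>1/2$) for $\ell^1_k$ summability. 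The bookkeeping differs in small ways (the paper splits $T_\alpha$ into $T_{\alpha;1}$, $T_{\alpha;2}$ by which factor receives $\partial_y$ and handles $T_{\alpha;1}$ by GNS on $\partial_y\phi_k$ rather than via \eqref{ineq:d2phiAlpha}), but the decomposition, the integrations by parts, and the lemmas invoked are the ones the paper uses, and the final $\nu^{-1/2}\mathcal{E}^{1/2}\mathcal{D}$ closing is identical.
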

\begin{proof}
First consider $\mathcal{L}_0$. 
Using that $\siming{\omega_0}(t,\pm 1) = \partial_{yy} \omega_0(t,\pm 1) = 0$, we may integrate by parts and obtain the following
\begin{align*}
\mathcal{L}_0 & = -e^{2\delta_\ast \nu t} \nu \norm{\partial_y \omega_{0}}_{L^2}^2  - c_{\alpha}  \nu^{5/3}e^{2\delta_\ast \nu t } \norm{\partial_y^2 \omega_{0}}_{L^2}^2. 
\end{align*}
By Poincar\'e inequality, for $\delta_\ast < 1/8$ we have
\begin{align}\label{NLzero_1}
\mathcal{L}_0 & \leq -\frac{1}{2}\mathcal{D}_0 - 4\siming{\delta_\ast} \nu \mathcal{E}_0. 
\end{align}
This completes the treatment of the linear contributions. 

Consider the nonlinear terms next. 
First, by the Biot-Savart law we have, 
\begin{align}
\mathcal{NL}_0 & = -e^{2\delta_\ast \nu t} \mathrm{Re} \brak{\omega_{0},\p_y\lf( \sum_{k \neq 0} \siming{ik} \phi_k \omega_{-k}\rg) }  + c_{\alpha} \nu^{2/3}e^{2\delta_\ast \nu t} \mathrm{Re} \brak{\p_y  \omega_{0}, \p_y^2\lf (\sum_{k \neq 0} \siming{ik} \phi_k \omega_{-k}\rg)}  =: T_0 + T_\alpha.\label{NLzero_2} 
\end{align}
Consider $T_0$ first.
Integrating by parts in $y$, (noting that $\omega_0$ vanishes on the boundary),  
\begin{align}
 | T_0| & = e^{2\delta_\ast \nu t}\lf| \mathrm{Re} \brak{\partial_y \omega_{0}, \sum_{k \neq 0} \siming{ik} \phi_k \omega_{-k} } \rg|\lesssim e^{2\delta_\ast \nu t} \norm{\partial_y \omega_{0}}_{L^2} \sum_{k \neq 0} \norm{\siming{ik} \phi_{k}}_{L^\infty} \norm{\omega_{-k}}_{L^2}
 \lesssim \frac{\mathcal{E}_{\neq}^{1/2}}{\sqrt{\nu}}\mathcal{D}^{1/2}_0 \mathcal{D}_\tau^{1/2},\label{NLzero_3}
\end{align}
which suffices by the bootstrap hypotheses. 

For $T_\alpha$ we have by integration by parts (here we are using both that $\siming{ik} \phi_k$ and $\omega_{-k}$ vanish on the boundary), 
\begin{align*}
|  T_\alpha |& \lesssim  e^{2\delta_\ast \nu t} \nu^{2/3} \abs{ \brak{\p_y \omega_{0},\p_y^2 \lf(\sum_{k \neq 0} \siming{ik}\phi_k \omega_{-k}\rg)} }   \lesssim e^{2\delta_\ast \nu t} \nu^{2/3} \abs{ \brak{\p_y^2 \omega_{0}, \p_y \lf(\sum_{k \neq 0}\siming{ik} \phi_k \omega_{-k}\rg)} } \\
& \lesssim e^{2\delta_\ast \nu t} \nu^{2/3} \norm{ \p_y^2 \omega_{0}}_{L^2} \left( \sum_{k \neq 0} \norm{\siming{ik}\partial_{y} \phi_k}_{L^\infty} \norm{\omega_{-k}}_{L^2} + \sum_{k \neq 0} \norm{\siming{ik} \phi_k}_{L^\infty} \norm{\partial_y \omega_{-k}}_{L^2}\right)=:T_{\al;1}+T_{\al;2}. 
\end{align*}
The first term is estimated using the following 
\begin{align*}
\sum_{k \neq 0} \norm{ik\partial_{y} \phi_k}_{L^\infty} \norm{\omega_{-k}}_{L^2} & \lesssim \sum_{k \neq 0} |k| \norm{\partial_{y}^2 \phi_k}_{L^2}^{1/2}\norm{\partial_{y} \phi_k}_{L^2}^{1/2} \norm{\omega_{-k}}_{L^2} \\ 
& \lesssim \sum_{k \neq 0} |k|^{1/3} \norm{|k|^m \omega_{-k}}_{L^2}^2  
 \lesssim \norm{|\pa_x|^{m+1/3} \omega_{\neq}}_{L^2}\norm{|\pa_x|^m\omega_{\neq}}_{L^2}, 
\end{align*}
provided $m > 1/3$, yielding the estimate
\begin{align*}
\siming{T_{\al;1}} & \lesssim \myr{\nu^{-1/3}}\mathcal{E}^{1/2} \mathcal{D}_0^{1/2} \mathcal{D}_{\beta}^{1/2}. 
\end{align*}
The $T_{\alpha;2}$-term is estimated with the following\siming{
\begin{align*}
 \sum_{k \neq 0}& \norm{ik \phi_k}_{L^\infty} \norm{\partial_y \omega_{-k}}_{L^2}  \lesssim \sum_{k \neq 0} \abs{k}^{-2m} \norm{|k|^{m+2} \phi_k}_{L^2}^{1/2} \norm{|k|^{m+2/3}\partial_y \phi_k}_{L^2}^{1/2} \norm{|k|^{m-1/3} \partial_y \omega_{-k}}_{L^2} \\
& \lesssim \nu^{-1/3}\mathcal{D}_\tau^{1/2} \mathcal{E}^{1/2}_\nq e^{-2\dnt}.
\end{align*}} Hence we have that 
\begin{align}\label{NLzero_4}
 | T_\alpha|\lesssim T_{\alpha;1}+T_{\alpha;2} \lesssim \myr{\nu^{-1/2}}\mathcal{E}^{1/2} \mathcal{D}.
\end{align}
yielding the desired estimate. Combining \eqref{NLzero_1}, \eqref{NLzero_2}, \eqref{NLzero_3}, \eqref{NLzero_4} yields the result \eqref{est:NLzero}.  
\end{proof} 

\subsection{Non-zero Frequency Energy}
First of all, we identify the expressions of the main terms to estimate. Define the following 
\begin{align}
\mathbb{L}_k:= &- U ik\omega_k +\myr{ U'' }ik \phi_k   + \nu \Delta_k \omega_k;\\
\mathbb{NL}_k:=& (\na^\perp\phi\cdot \na \omega)_k=\sum_{k'=-\infty}^\infty \na_{k-k'}^\perp \phi_k\cdot \na_{k'}\omega_{k'}.
\end{align}
Expanding the linear and nonlinear contributions of the nonzero frequency energy gives
\begin{align}\label{ddtEnq}
\frac{d}{dt} \mathcal{E}_\nq = \mathcal{L}_\nq + \mathcal{NL}_\nq\siming{+2\delta_\ast\nu^{1/3}\mathcal{E}_\nq}. 
\end{align}
Thanks to the definitions \eqref{def:Ek}, \eqref{mcl_E_0}, \eqref{mcl_E_nq}, \eqref{mcl_E},  \eqref{mcl_D_0},  \eqref{mcl_D_nq},  \eqref{mcl_D}, we obtain
\begin{align}\n
\mathcal{L}_\nq:= & 2e^{ 2\delta_\ast \nu^{1/3}t }\sum_{k\neq 0} |k|^{2m}\Re \brak{\omega_k, (I + c_\tau \mathfrak{J}_k)\mathbb{L}_k}\\
\n & +2c_\al e^{ 2\delta_\ast \nu^{1/3}t } \sum_{k\neq 0} |k|^{2m} \Re \brak{ \nu^{1/3} \abs{k}^{-1/3} \p_y \omega_{k}, (I + c_\tau \mathfrak{J}_k) \nu^{1/3} \abs{k}^{-1/3} \p_y \mathbb{L}_k} \\
&-c_\beta  e^{ 2\delta_\ast \nu^{1/3}t }\sum_{k\nq0} \nu^{1/3} \abs{k}^{2m-4/3} \bigg( \Re \brak{ik \omega_k,  \p_y \mathbb{L}_k}  + \Re \brak{ik  \mathbb{L}_k , \p_y \omega_k} \bigg);\label{L}\\
\n\mathcal{NL}_\nq:=&2e^{2\delta_\ast \nu^{1/3}t }\sum_{k\neq 0} |k|^{2m}\Re \brak{\omega_k, (I + c_\tau \mathfrak{J}_k)\mathbb{NL}_k}\\
\n &+2c_\al e^{2\delta_\ast \nu^{1/3}t } \sum_{k\neq 0} |k|^{2m} \Re \brak{ \nu^{1/3} \abs{k}^{-1/3} \p_y \omega_{k}, (I + c_\tau \mathfrak{J}_k) \nu^{1/3} \abs{k}^{-1/3} \p_y \mathbb{NL}_k} \\
\n&-c_\beta e^{2\delta_\ast\nu^{1/3}t} \sum_{k\nq0} \nu^{1/3} \abs{k}^{2m-4/3} \left( \Re \brak{ik \omega_k,  \p_y \mathbb{NL}_k} + \Re \brak{ik \mathbb{NL}_k , \p_y \omega_k} \right)\\
=:&\mathcal{NL}_{\gamma,\tau}+\mathcal{NL}_{\al,\tau\al}+\mathcal{NL}_{\beta}.\label{NL}
\end{align}
Here we group the $\gamma,\, \tau$ terms and $\al, \,\tau\al$ terms together because they share main features. 
The goal of this subsection is devoted to the proof of the following lemma.
\begin{theorem} \label{thm:NLNzero}
There exists a universal constant $C$ (independent of $\nu$) such that 
\begin{align*}
\mathcal{L}_{\neq} + \mathcal{NL}_{\neq}\siming{+2\delta_\ast\nu^{1/3}\mathcal{E}_\nq} &\leq -\siming{4\delta_\ast\mathcal{D}_{\neq} -6\delta_\ast \nu^{1/3}\sum_{k \neq 0}  e^{2\delta_\ast \nu^{1/3} t} \abs{k}^{2m + 2/3} E_k[\omega_k]+} \frac{C\mathcal{E}^{1/2}}{\sqrt{\nu}} \mathcal{D}.
\end{align*}
Here the $\mathcal{D}_\nq,\,\mathcal{D}$ are defined in \eqref{mcl_D_nq}, \eqref{mcl_D}.
\end{theorem}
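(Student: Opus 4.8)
The plan is to split the estimate into the linear and nonlinear parts and handle them separately. For the linear part $\mathcal{L}_{\neq} + 2\delta_\ast\nu^{1/3}\mathcal{E}_\nq$, we simply assemble the mode-by-mode linearized estimate of Proposition~\ref{prop:lin}: since $\mathbb{L}_k$ is exactly the right-hand side appearing in \eqref{NSchannel}, the combination $2\Re\brak{\omega_k,(I+c_\tau\mathfrak{J}_k)\mathbb{L}_k}$ and the analogous $\alpha$- and $\beta$-contributions are precisely $\tfrac{d}{dt}E_k$ minus the time-weight terms; multiplying by $e^{2\delta_\ast\nu^{1/3}t}|k|^{2m}$, summing in $k$, and invoking the differential inequality $\tfrac{d}{dt}E_k \le -8\delta_\ast \mathrm{D}_k - 8\delta_\ast\nu^{1/3}|k|^{2/3}E_k$ from Proposition~\ref{prop:lin} gives
\[
\mathcal{L}_{\neq} + 2\delta_\ast\nu^{1/3}\mathcal{E}_\nq \le -8\delta_\ast\mathcal{D}_\nq - 8\delta_\ast\nu^{1/3}\sum_{k\neq 0}e^{2\delta_\ast\nu^{1/3}t}|k|^{2m+2/3}E_k[\omega_k] + (\text{lower order from }2\delta_\ast\nu^{1/3} \text{ vs } 8\delta_\ast\nu^{1/3}),
\]
which already beats the claimed $-4\delta_\ast\mathcal{D}_\nq$ and $-6\delta_\ast\nu^{1/3}(\cdots)$, leaving room to absorb nonlinear errors. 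Care is needed because the $|k|^{2m}$ weight does not commute trivially through the off-diagonal $\beta$-term, but since the weight is applied after the mode-by-mode decomposition this is purely bookkeeping.

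For the nonlinear part I would estimate each of $\mathcal{NL}_{\gamma,\tau}$, $\mathcal{NL}_{\alpha,\tau\alpha}$, $\mathcal{NL}_\beta$ separately, in each case expanding $\mathbb{NL}_k = \sum_{k'}\nabla^\perp_{k-k'}\phi_{k-k'}\cdot\nabla_{k'}\omega_{k'}$ and splitting according to whether the $k-k'$ factor is the zero mode ($k'=k$, the "reaction" term involving $\phi_0,\omega_k$) or a nonzero mode (the "transport"/"remainder" interactions). The zero-mode interactions are controlled using Lemma~\ref{lem:d2phi0} for $\|\partial_y\phi_0\|_{L^\infty}$, $\|\partial_y^2\phi_0\|_{L^\infty}$; the nonzero interactions are controlled by Lemmas~\ref{lem:komegaD} and~\ref{lem:dxphiL2t}, which give $L^2_t$-in-time control of $\|\phi_k\|_{L^\infty}$, $\|\partial_y\phi_k\|_{L^\infty}$, $\|\partial_y^2\phi_k\|_{L^\infty}$ in terms of $\mathcal{D}_\tau^{1/2}$, $\mathcal{D}_{\tau\alpha}^{1/2}$, $\mathcal{E}^{1/2}$. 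In each term I pair a high-regularity factor (which gets controlled by a dissipation norm $\mathcal{D}$ component, possibly with a $\nu$-loss) against one or two low-regularity factors (controlled by $\mathcal{E}^{1/2}$), doing the product estimates in Fourier frequency via Young's convolution inequality after factoring out $|k|^{2m}$ using $|k|^{2m}\lesssim |k-k'|^{2m}+|k'|^{2m}$; the constraint $m\in(2/3,1)$ is what makes the frequency sums summable (as in the proof of Lemma~\ref{lem:dxphiL2t}). The presence of $\mathfrak{J}_k$ in two of the three terms costs nothing because $\mathfrak{J}_k$ and $[\partial_y,\mathfrak{J}_k]=\mathfrak{H}_k$ are bounded on $L^2$ uniformly in $k$ (Lemmas~\ref{lem:BoundI}, \ref{lem:H}), so one just moves $(I+c_\tau\mathfrak{J}_k)$ onto the $\omega_k$ or $\partial_y\omega_k$ factor, commuting past $\partial_y$ when necessary.

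The main obstacle I expect is the $\nu$-dependence accounting: the nonlinearity is not small-in-$\nu$ on its own, so every nonlinear term must be shown to carry at most a $\nu^{-1/2}$ prefactor against $\mathcal{E}^{1/2}\mathcal{D}$ — a $\nu^{-1}$ would be fatal for the bootstrap threshold $\mathcal{E}|_{t=0}\lesssim \delta_\ast^2\nu$. Tracking this requires using the enhanced-dissipation structure carefully: the $\omega_0$ factors come with $\mathcal{D}_0$-norms which include the crucial $\nu^{1/2}$ gain from $\|\nu^{1/3}\partial_y^2\omega_0\|$-type terms and the $e^{2\delta_\ast\nu t}$ weights, while the nonzero modes must be balanced between the $\mathcal{D}_\beta$ (enhanced dissipation, $\nu^{1/3}|k|^{2/3}$) and $\mathcal{D}_\gamma$ (full viscous, $\nu|k|^2$) norms using the interpolation identities of Lemma~\ref{lem:komegaD} — the worst terms are the transport nonlinearity $\phi_{\neq}\cdot\nabla\omega_{\neq}$ interacting to produce a zero or low mode, and the terms where $\partial_y$ lands on $\omega_k$ producing a $\|\partial_y\omega_k\|_{L^2}$ that must be paid for with half a power of $\mathcal{D}_\gamma$. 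A secondary technical point is that $\partial_y\omega_k$ satisfies Neumann rather than Dirichlet conditions, so integrations by parts in the $\alpha$-terms must avoid creating uncontrolled boundary contributions — but as noted in the proof of Lemma~\ref{lem:LinTauAlpha} the relevant quantities ($\partial_{yy}\omega_k$) do vanish on the boundary, so this works out.
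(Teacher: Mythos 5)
Your roadmap matches the paper's proof exactly: the linear part is disposed of by summing the mode-by-mode bound of Proposition~\ref{prop:lin} against the $e^{2\delta_\ast\nu^{1/3}t}|k|^{2m}$ weight (with the $+2\delta_\ast\nu^{1/3}\mathcal{E}_\nq$ term absorbed since $|k|^{2m+2/3}\ge|k|^{2m}$ for $|k|\ge 1$), and the nonlinear part is split into the $\gamma/\tau$, $\alpha/\tau\alpha$, and $\beta$ groups, each further decomposed into $T_{0\neq}$, $T_{\neq 0}$, $T_{\neq\neq}$ interactions and controlled via Lemmas~\ref{lem:BoundI}--\ref{lem:H}, \ref{lem:d2phi0}--\ref{lem:dxphiL2t} with paraproduct/frequency decompositions. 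One small bookkeeping omission in your sketch: the nonzero-mode-$\phi$ case should itself be sub-split into $k'=0$ (giving $\phi_k\,\partial_y\omega_0$, the $T_{\neq 0}$ term handled via the $\mathcal{E}_0$/$\mathcal{D}_0$ budget) and $k'\neq 0$, but you correctly list the lemmas needed for both so this is not a substantive gap.
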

We divide the proof of Theorem \ref{thm:NLNzero} into three lemmas. 
\begin{lemma}[Estimate of the $\mathcal{NL}_{\gamma,\tau}$]\label{lem:gatau} There exists a universal constant $C$ such that 
\begin{align}\label{est:gatau}
|\mathcal{NL}_{\gamma,\tau}|\leq \frac{C\mathcal{E}^{1/2}}{\sqrt{\nu}}\mathcal{D}.
\end{align}
\end{lemma}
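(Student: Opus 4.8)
The plan is to reduce $\mathcal{NL}_{\gamma,\tau}$ to a collection of trilinear sums in the $x$-frequency, absorb the singular integral operator $\mathfrak{J}_k$ using its $k$-uniform $L^2\to L^2$ bound, and then close each trilinear sum by an $L^\infty$--$L^2$ Hölder split in $(x,y)$ combined with a Littlewood--Paley decomposition in $k$ and the preliminary Lemmas \ref{lem:d2phi0}, \ref{lem:komegaD}, \ref{lem:dxphiL2t}.

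First, since $\norm{(I+c_\tau\mathfrak{J}_k)g}_{L^2}\lesssim\norm{g}_{L^2}$ uniformly in $k\neq 0$ by Lemma \ref{lem:BoundI} (so, unlike $\mathcal{NL}_{\al,\tau\al}$, no commutator of $\mathfrak{J}_k$ with $\pa_y$ enters), Cauchy--Schwarz gives
\begin{align*}
|\mathcal{NL}_{\gamma,\tau}|\lesssim e^{2\dnt}\sum_{k\neq 0}\sum_{k'\in\Z}|k|^{2m}\,\norm{\omega_k}_{L^2}\,\norm{\na_{k-k'}^\perp\phi_{k-k'}\cdot\na_{k'}\omega_{k'}}_{L^2}.
\end{align*}
I then split the $k'$-sum into the two ``mean'' interactions $k'=k$ and $k'=0$ and the purely nonzero remainder. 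The $k'=k$ term equals $ik(\pa_y\phi_0)\omega_k$; bounding $\norm{ik(\pa_y\phi_0)\omega_k}_{L^2}\le|k|\norm{\pa_y\phi_0}_{L^\infty}\norm{\omega_k}_{L^2}$ and using $\norm{\pa_y\phi_0}_{L^\infty}\lesssim\mathcal{E}_0^{1/2}e^{-\delta_\ast\nu t}$ (Lemma \ref{lem:d2phi0}) and $\sum_k|k|^{1+2m}\norm{\omega_k}_{L^2}^2\lesssim\nu^{-1/2}\mathcal{D}_\beta^{3/4}\mathcal{D}_\gamma^{1/4}e^{-2\dnt}$ (Lemma \ref{lem:komegaD}), together with Young's inequality $\mathcal{D}_\beta^{3/4}\mathcal{D}_\gamma^{1/4}\lesssim\mathcal{D}$, bounds this by $\nu^{-1/2}\mathcal{E}^{1/2}\mathcal{D}$. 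The $k'=0$ term equals $-ik\phi_k(\pa_y\omega_0)$; bounding $\norm{ik\phi_k(\pa_y\omega_0)}_{L^2}\le|k|\norm{\phi_k}_{L^\infty}\norm{\pa_y\omega_0}_{L^2}$, using $\norm{\pa_y\omega_0}_{L^2}\lesssim\nu^{-1/2}\mathcal{D}_0^{1/2}e^{-\delta_\ast\nu t}$, and Cauchy--Schwarz in $k$ to split $|k|^{1+2m}$ between $\omega_k$ (controlled by $\mathcal{E}_\nq$) and $\phi_k$ (controlled by $\mathcal{D}_\tau$ through Lemma \ref{lem:dxphiL2t}, since $|k|^{2m+2}\le|k|^{2m+3-\delta}$ for $\delta<1$) also bounds this by $\nu^{-1/2}\mathcal{E}^{1/2}\mathcal{D}$; in both cases the exponential weights combine to an $O(1)$ prefactor.

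The remaining, genuinely nonlinear term $R_k:=\sum_{k'\neq 0,\,k}\na_{k-k'}^\perp\phi_{k-k'}\cdot\na_{k'}\omega_{k'}$ is where the work lies. I would split its summand both according to the structure of the nonlinearity ($\pa_y\phi\cdot\pa_x\omega$ versus $\pa_x\phi\cdot\pa_y\omega$) and, via Littlewood--Paley, according to whether $|k-k'|\lesssim|k'|$ or $|k'|\lesssim|k-k'|$. In the regime where $\omega$ carries the higher frequency, $|k|\sim|k'|$, the velocity factor goes in $L^\infty_{x,y}$ — controlled through $\mathcal{D}_\tau$, $\mathcal{D}_{\tau\al}$ or $\norm{\pa_y^2\phi}_{L^\infty}$ and $\mathcal{E}_\nq$ by Lemma \ref{lem:dxphiL2t} — while $\na_{k'}\omega_{k'}$ and the weight $|k|^{2m}$ are distributed so that one copy of $|k'|^m$ sits on $\omega_k$ at energy level ($\mathcal{E}_\nq$) and the transported derivative on $\omega_{k'}$ is paid for by the parabolic dissipation $\mathcal{D}_\gamma$ (or the weaker $\mathcal{D}_\beta$), which is the source of the $\nu^{-1/2}$. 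In the regime where $\phi$ carries the higher frequency, $|k|\sim|k-k'|$, and the weight $|k|^{2m}$ instead rides on the velocity factor, which is affordable because $\norm{\phi_k}_{L^\infty}$ inherits strong off-diagonal decay $\sim|k|^{-m-3/2}$ from Lemma \ref{lem:dxphiL2t}. In every case the double $k,k'$-sum is a discrete convolution which converges by Schur's test / Young's inequality, using $m>2/3$ so that $|k|^{-2m}\in\ell^1$, and the total contribution is $\lesssim\nu^{-1/2}\mathcal{E}^{1/2}\mathcal{D}$, which establishes \eqref{est:gatau}.

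The main obstacle is precisely the $\nu$-power bookkeeping in $R_k$: the derivatives and the weight $|k|^{2m}$ must be distributed between the two factors so that the \emph{total} negative power of $\nu$ is exactly $\nu^{-1/2}$ and no factor is asked for more regularity or more $\nu$-smallness than the available functionals supply. This works because the inviscid-damping dissipations are $\nu$-cheap ($\mathcal{D}_\tau$ costs nothing and $\mathcal{D}_{\tau\al}$ only $\nu^{-1/3}$), so at most one vorticity factor need be placed at full parabolic level; the Littlewood--Paley splitting is what guarantees that the stray $x$-derivative coming from the transport structure always lands on the factor estimated in $L^2_{x,y}$ rather than on the $L^\infty_{x,y}$ velocity factor.
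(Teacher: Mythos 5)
Your decomposition into the $k'=k$, $k'=0$, and genuinely-nonzero interactions mirrors the paper's, and your treatments of the first two are correct (the paper additionally observes that the $I$ part of the $k'=k$ interaction vanishes identically by realness, but since the $\mathfrak{J}_k$ part leaves the same bound this cancellation is not essential). The gap is in the $\neq\neq$ interaction, specifically in the piece $\pa_y\phi_{k-k'}\,ik'\omega_{k'}$ in the regime $|k-k'|\lesssim|k'|$, and it comes from the weight distribution you announce.

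You propose to place $|k|^m\norm{\omega_k}_{L^2}$ at energy level, the transported factor $|k'|\omega_{k'}$ (with the residual $|k'|^m$) at $\mathcal{D}_\gamma$ level, and $\norm{\pa_y\phi_{k-k'}}_{L^\infty}$ in an unweighted $\ell^1$ sum. That triple gives
\begin{align*}
\mathcal{E}_\nq^{1/2}\cdot\nu^{-1/2}\mathcal{D}_\gamma^{1/2}\cdot\mathcal{E}_\nq^{1/2}=\nu^{-1/2}\,\mathcal{E}_\nq\,\mathcal{D}_\gamma^{1/2},
\end{align*}
which is \emph{not} $\lesssim\nu^{-1/2}\mathcal{E}^{1/2}\mathcal{D}$: it carries two factors of $\mathcal{E}^{1/2}$ and only half a dissipation, and there is no way to trade $\mathcal{E}_\nq^{1/2}\mathcal{D}_\gamma^{1/2}$ for $\mathcal{D}$ without a $\nu$-loss. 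The culprit is that without a rewrite, the $\pa_x$ from the transport term lands as a raw $|k'|$ on $\omega_{k'}$, and after absorbing $|k|^{2m}$ the high-frequency factor carries $|k'|^{m+1}$ — more than any single available functional affords at the $\nu^{-1/2}$ budget.

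The paper sidesteps this by first rewriting the nonlinearity in divergence form,
\begin{align*}
\na^\perp_{k-k'}\phi_{k-k'}\cdot\na_{k'}\omega_{k'}=-\pa_y\bigl(i(k-k')\phi_{k-k'}\omega_{k'}\bigr)+ik\,\pa_y\phi_{k-k'}\,\omega_{k'},
\end{align*}
so the troublesome term carries the \emph{outer} frequency $k$ rather than $k'$, then integrating by parts on the total-derivative piece (which is why the commutator Lemmas \ref{lem:CommIpy}--\ref{lem:H} do enter this section, contrary to your remark). After the rewrite the weight is a clean $|k|^{2m+1}$, which is split as $|k|^{m+1/3}$ on $\omega_k$ at $\mathcal{D}_\beta$ level (not $\mathcal{E}_\nq$) and $|k|^{m+2/3}$ on whichever of $\omega_{k'}$ or $\pa_y\phi_{k-k'}$ is high frequency, using Lemma \ref{lem:komegaD} or Lemma \ref{lem:dxphiL2t} respectively. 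This produces $\nu^{-1/2}\mathcal{E}_\nq^{1/2}\mathcal{D}_\beta^{3/4}\mathcal{D}_\gamma^{1/4}$ (or with $\mathcal{D}_{\tau\al}^{1/4}$ in the HL case), which has exactly one $\mathcal{E}^{1/2}$ and a Young-combinable product of dissipations. You could in principle run a version of this without the rewrite — the key substitution is to place $|k|^{m+1/3}\omega_k$ at $\mathcal{D}_\beta$ level and use Lemma \ref{lem:komegaD}'s interpolation $\sum|k'|^{2m+4/3}\norm{\omega_{k'}}_{L^2}^2\lesssim\nu^{-2/3}\mathcal{D}_\gamma^{1/2}\mathcal{D}_\beta^{1/2}$ on the high-frequency vorticity — but as written, with $\omega_k$ at the pure $\mathcal{E}_\nq$ level, the estimate fails.
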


\begin{lemma}[Estimate of the $\mathcal{NL}_{\al,\tau\al}$]\label{lem:altau} There exists a universal constant $C$ such that 
\begin{align}\label{est:altau}
|\mathcal{NL}_{\al,\tau\al}|\leq \frac{C\mathcal{E}^{1/2}}{\sqrt{\nu}}\mathcal{D}.
\end{align}
\end{lemma}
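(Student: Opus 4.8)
The plan is to follow the scheme of Lemma~\ref{lem:gatau}, the only new feature being the extra weight $\nu^{1/3}\abs{k}^{-1/3}\partial_y$ on both entries of the inner product. As a preliminary reduction I will differentiate the identity $\mathbb{NL}_k=(u\cdot\grad\omega)_k$, writing $\partial_y(u\cdot\grad\omega)=u\cdot\grad(\partial_y\omega)+(\partial_yu)\cdot\grad\omega$ (alternatively one integrates by parts in $y$, using that $\partial_{yy}\omega_k$, $\mathbb{NL}_k$ and $\mathfrak{J}_k[\mathbb{NL}_k]$ all vanish at $y=\pm1$ --- the last because $G_k(\pm1,\cdot)\equiv0$ --- together with $[\partial_y,\mathfrak{J}_k]=\mathfrak{H}_k$ from Lemma~\ref{lem:CommIpy}). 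Either way $\mathcal{NL}_{\al,\tau\al}$ reduces to $\nu^{2/3}e^{2\delta_\ast\nu^{1/3}t}\sum_{k\neq0}\abs{k}^{2m-2/3}\Re\brak{\partial_y\omega_k,(I+c_\tau\mathfrak{J}_k)\big(u\cdot\grad(\partial_y\omega)\big)_k}$ plus strictly lower-order remainders, the latter carrying either a factor $\partial_yu$ or a factor $\norm{\mathfrak{H}_k}_{L^2\to L^2}\lesssim\abs{k}$ (Lemma~\ref{lem:H}) and being bounded directly by Cauchy--Schwarz and the $L^2$-boundedness of $I+c_\tau\mathfrak{J}_k$ (Lemma~\ref{lem:BoundI}), using the elliptic/Sobolev bounds $\norm{\partial_y\phi_0}_{L^\infty},\norm{\partial_y^2\phi_0}_{L^\infty}$ (Lemma~\ref{lem:d2phi0}) and $\norm{\phi_k}_{L^\infty},\norm{\partial_y^2\phi_k}_{L^\infty}$ (Lemma~\ref{lem:dxphiL2t} and~\eqref{ineq:d2phiAlpha}).

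For the main term I will use the transport cancellation, exactly as in Lemma~\ref{lem:gatau}. With $\Lambda:=\abs{\partial_x}$, one has $\sum_k\abs{k}^{2m-2/3}\Re\brak{\partial_y\omega_k,(u\cdot\grad\partial_y\omega)_k}=\Re\brak{\Lambda^{m-1/3}\partial_y\omega,u\cdot\grad\Lambda^{m-1/3}\partial_y\omega}+\Re\brak{\Lambda^{m-1/3}\partial_y\omega,[\Lambda^{m-1/3},u\cdot\grad]\partial_y\omega}$; the first term vanishes because $\grad\cdot u=0$ and $u_2|_{y=\pm1}=0$, and the corresponding cancellation for the $\mathfrak{J}_k$-weighted piece is obtained by treating the zero-mode transport $u_0\cdot\grad=\partial_y\phi_0\,\partial_x$ through the Taylor-expansion and Green's-function identity of Lemma~\ref{lem:LinTau} with $\partial_y\phi_0$ in place of $U$, which yields a sign-favorable term together with errors controlled by $\norm{\partial_{yy}\phi_0}_{L^\infty}\norm{\grad_k\phi_k}_{L^2}^2$ and $\norm{\partial_y\omega_0}_{L^2}$. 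The genuinely lower-order leftovers --- the commutators $[\Lambda^{m-1/3},u_1]\partial_x\partial_y\omega$, $[\Lambda^{m-1/3},u_2]\partial_{yy}\omega$, $[\mathfrak{J}_k,\partial_y\phi_0]$ applied to $\partial_x\Lambda^{m-1/3}\partial_y\omega$, and the transport by the nonzero velocity modes $u_{\neq}$ --- are estimated by placing the lowest-frequency factor in $L^\infty_y$, using the bounds above together with $\norm{\partial_x u}\approx\mathcal{D}_{(\tau)}^{1/2}$, $\norm{\nu^{1/3}\abs{\partial_x}^{-1/3}\partial_x\partial_y u}\approx\mathcal{D}_{(\tau\alpha)}^{1/2}$, and summing in $k$ by Young's convolution inequality, the $\ell^1_k$-summability being furnished by $m>2/3$ as in the proof of~\eqref{ineq:d2phiAlpha} and Lemma~\ref{lem:komegaD}.

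Collecting everything, each contribution becomes a product of one half-power of $\mathcal{E}$ (one of $\mathcal{E}_0,\mathcal{E}_\nq$) with a full power of dissipation assembled from $\mathcal{D}_0$, $\mathcal{D}_E$ and the pieces $\mathcal{D}_{(\gamma)},\mathcal{D}_{(\alpha)},\mathcal{D}_{(\beta)},\mathcal{D}_{(\tau)},\mathcal{D}_{(\tau\alpha)}$ of $\mathcal{D}_\nq$, with an accumulated power of $\nu$ no more negative than $\nu^{-1/2}$; using $\mathcal{E}_0,\mathcal{E}_\nq\le\mathcal{E}$, each $\mathcal{D}_{(\cdot)},\mathcal{D}_0,\mathcal{D}_E\le\mathcal{D}$, and $\nu<1$ (the latter also reconciling the time weights $e^{-\delta_\ast\nu t}$ of the zero-mode factors with the weight $e^{2\delta_\ast\nu^{1/3}t}$, as in Lemma~\ref{lem:gatau}) gives $\abs{\mathcal{NL}_{\al,\tau\al}}\lesssim\nu^{-1/2}\mathcal{E}^{1/2}\mathcal{D}$, which is~\eqref{est:altau}.

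The step I expect to be the main obstacle is the power counting in $\nu$. Since $\partial_y\omega_0,\partial_{yy}\omega_0$ and the second $y$-derivatives of $\phi$ enter $\mathcal{E}_0,\mathcal{D}_0,\mathcal{E}_\nq$ only at orders $\nu^{-1/3},\nu^{-5/6},\nu^{-1/6}$ respectively (unlike the nonzero modes, which carry the $\nu^{1/3}$-weights), every place where the extra $y$-derivative of the $\alpha$-energy lands on a zero-mode factor, or where a Gagliardo--Nirenberg interpolation is used, costs a factor, and for each resulting term one must choose which piece of $\mathcal{D}_\nq$ (with its specific powers of $\nu$ and $\abs{k}$) to charge each factor against so that the net power never drops below $\nu^{-1/2}$; the slack making this possible comes precisely from the enhanced-dissipation piece $\mathcal{D}_E$ inside $\mathcal{D}$, and the hypothesis $m>2/3$ is exactly what closes the $\ell^1$-summations in the high--high interactions.
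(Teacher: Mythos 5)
Your plan diverges from the paper's proof in a way that ultimately introduces a genuine gap, though parts of it are valid.

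The valid part is the transport cancellation for the $I$-weighted piece: writing $\partial_y\mathbb{NL} = u\cdot\grad\partial_y\omega + (\partial_y u)\cdot\grad\omega$ and observing $\Re\brak{\Lambda^{m-1/3}\partial_y\omega,\,u\cdot\grad\Lambda^{m-1/3}\partial_y\omega}=0$ (since $\grad\cdot u = 0$, $u_2|_{y=\pm1}=0$) is correct, and the residual $[\Lambda^{m-1/3},u\cdot\grad]$ is a standard paradifferential commutator. The paper does not use this global cancellation: in its treatment of $\mathcal{NL}_{\al,\tau\al}$ the only cancellation exploited is the single zero-mode term $T_{0\neq;3}$, which is zero simply because $ik\partial_y\phi_0|\nu^{1/3}|k|^{-1/3}\partial_y\omega_k|^2$ is purely imaginary; all $\neq\neq$ interactions are estimated directly with frequency decompositions and the bounds on $\mathfrak{J}_k$, $[\mathfrak{J}_k,\partial_y]$. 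Your cancellation, if worked out, buys nothing over the direct estimate at the claimed precision, but it is not wrong.

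The gap is in your treatment of the $\mathfrak{J}_k$-weighted zero-mode transport. You propose to rerun the Taylor-expansion/Green's-function argument of Lemma \ref{lem:LinTau} with $\partial_y\phi_0$ in the role of $U$. Two problems. First, the ``sign-favorable'' claim is false: the coefficient playing the role of $U'$ becomes $\partial_{yy}\phi_0=\omega_0$, which has no definite sign, so there is no analogue of $-\tfrac14|k|^2\|\sqrt{U'}\grad_k\phi_k\|^2$. Second, and more seriously, the remainder in that Taylor expansion requires the analogue of $\|U'''\|_{L^\infty}$, which here is $\|\partial_y^4\phi_0\|_{L^\infty}=\|\partial_y^2\omega_0\|_{L^\infty}$. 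The bootstrap quantities do not control this: $\mathcal{E}_0$ gives $\omega_0,\ \nu^{1/3}\partial_y\omega_0\in L^2$ and $\mathcal{D}_0$ gives $\nu^{5/6}\partial_y^2\omega_0\in L^2$, but there is no $L^\infty$ control of $\partial_y^2\omega_0$ in the framework (compare Lemma \ref{lem:d2phi0}, which stops at $\|\partial_y^2\phi_0\|_{L^\infty}=\|\omega_0\|_{L^\infty}$). The paper sidesteps this entirely: the $\mathfrak{J}_k$-weighted zero-mode term $T_{0\neq;1}$ is estimated directly using only $\norm{\mathfrak{J}_k}_{L^2\to L^2}\lesssim 1$, $\norm{[\mathfrak{J}_k,\partial_y]}_{L^2\to L^2}\lesssim|k|$, and $\norm{\partial_y\phi_0}_{L^\infty}\lesssim\mathcal{E}_0^{1/2}$, which is the key observation your plan misses. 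You should abandon the Taylor-expansion route for this term and instead integrate one $\partial_y$ off $\mathfrak{J}_k\partial_y\omega_k$ using Lemma \ref{lem:AS} and Lemma \ref{lem:CommIpy}, then apply Cauchy--Schwarz and the interpolation of Lemma \ref{lem:d2phi0}, as the paper does.
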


\begin{lemma}[Estimate of the $\mathcal{NL}_{\beta}$]\label{lem:beta} There exists a universal constant $C$ such that 
\begin{align}\label{est:beta}
|\mathcal{NL}_{\gamma,\tau}|\leq\frac{C\mathcal{E}^{1/2}}{\sqrt{\nu}}\mathcal{D}.
\end{align}
\end{lemma}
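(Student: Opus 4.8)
The plan is to mimic the treatment of $\mathcal{NL}_{\gamma,\tau}$ and $\mathcal{NL}_{\al,\tau\al}$, taking advantage of the fact that the $\beta$-cross-term weight $\nu^{1/3}\abs{k}^{2m-4/3}$ in \eqref{NL} is of \emph{negative} order relative to the enhanced-dissipation scale $\nu^{1/3}\abs{k}^{2/3}$, which leaves comfortable room. First I would symmetrize the two pairings: since $\omega_k$ vanishes on $y=\pm1$, integrating by parts in $y$ shows $\Re\brak{ik\omega_k,\p_y\mathbb{NL}_k}=\Re\brak{ik\mathbb{NL}_k,\p_y\omega_k}$, so that
\begin{align*}
\mathcal{NL}_\beta=-2c_\beta e^{2\dnt}\sum_{k\nq0}\nu^{1/3}\abs{k}^{2m-4/3}\Re\brak{ik\mathbb{NL}_k,\p_y\omega_k},
\end{align*}
and in particular no derivative falls on the quadratic term. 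I would then split $\mathbb{NL}_k=\sum_{k'}\na_{k-k'}^\perp\phi_{k-k'}\cdot\na_{k'}\omega_{k'}$ by resonance type: (i) the transport piece $k'=k$, equal to $ik\,\p_y\phi_0\,\omega_k$; (ii) the mean-interaction piece $k'=0$, equal to $-ik\,\phi_k\,\p_y\omega_0$; and (iii) the genuine reaction, both $k'\nq0$ and $k-k'\nq0$.

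For (i), one integrates by parts once more in $y$, using $\Re(\omega_k\overline{\p_y\omega_k})=\tfrac12\p_y\abs{\omega_k}^2$ and $\omega_k|_{y=\pm1}=0$, to land the derivative on $\p_y\phi_0$; the contribution becomes $\lesssim e^{2\dnt}\norm{\p_y^2\phi_0}_{L^\infty}\sum_{k}\nu^{1/3}\abs{k}^{2m+2/3}\norm{\omega_k}_{L^2}^2$, and since $\norm{\p_y^2\phi_0}_{L^\infty}\lesssim\nu^{-1/6}\mathcal{E}_0^{1/2}$ (Lemma \ref{lem:d2phi0}) while the remaining sum is $\lesssim\mathcal{D}_E$ (as $E_k\gtrsim\norm{\omega_k}_{L^2}^2$), this piece is $\lesssim\nu^{-1/6}\mathcal{E}^{1/2}\mathcal{D}$. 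For (ii), I would use $\norm{\phi_k}_{L^\infty}\lesssim\abs{k}^{-3/2}\norm{\omega_k}_{L^2}$ (Gagliardo--Nirenberg plus elliptic regularity, as in the proof of Lemma \ref{lem:dxphiL2t}), absorb one copy of $\p_y\omega_0$ into the energy via $\nu^{1/3}\norm{\p_y\omega_0}_{L^2}\lesssim\mathcal{E}_0^{1/2}$, bound $\norm{\p_y\omega_k}_{L^2}$ against $\mathcal{D}_\gamma$ (costing $\nu^{-1/2}$), and close with Cauchy--Schwarz in $k$ together with the interpolation bookkeeping of Lemma \ref{lem:komegaD} (and the $\ell^1_k$ summability afforded by $m>2/3$). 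For (iii), I would run a paraproduct decomposition: in the regime $\abs{k-k'}\lesssim\abs{k'}$ put the velocity factor $\na_{k-k'}^\perp\phi_{k-k'}$ in $L^\infty$ (exploiting the one derivative of elliptic smoothing, controlled exactly as in Lemma \ref{lem:dxphiL2t}) and $\na_{k'}\omega_{k'}$ in $L^2$, symmetrically in the opposite regime, then sum via Young's convolution inequality, the extra regularity of $\na^\perp\phi$ over $\omega$ supplying $\ell^1$ summability; the weight $\nu^{1/3}\abs{k}^{2m-4/3}\abs{k}$ is apportioned so that $\p_y\omega$ factors land on $\mathcal{D}_\gamma^{1/2}$ or $\mathcal{D}_\alpha^{1/2}$, $\omega$ factors on $\mathcal{D}_\beta^{1/2}$ or $\mathcal{D}_E^{1/2}$, and one velocity factor carries an $\mathcal{E}^{1/2}$, with the interpolation identities of Lemma \ref{lem:komegaD}, e.g.\ $\sum_k\abs{k}^{1+2m}\norm{\omega_k}_{L^2}^2\lesssim\nu^{-1/2}\mathcal{D}_\beta^{3/4}\mathcal{D}_\gamma^{1/4}$, doing the final accounting.

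Summing (i)--(iii) yields $\abs{\mathcal{NL}_\beta}\lesssim\nu^{-1/2}\mathcal{E}^{1/2}\mathcal{D}$, with (i) and portions of (ii) in fact gaining $\nu^{-1/6}$. The main obstacle is the $\nu$-power bookkeeping in step (iii): the negative-order factor $\nu^{1/3}\abs{k}^{2m-4/3}$ has to be distributed so that the unavoidable $\nu^{-1/2}$ loss from estimating $\norm{\p_y\omega_k}_{L^2}$ by $\nu^{-1/2}\mathcal{D}_\gamma^{1/2}$ is not further compounded, which rules out cruder splittings such as $\mathcal{D}_0^{1/2}\mathcal{D}_\tau^{1/2}\mathcal{D}_\gamma^{1/2}\lesssim\mathcal{D}^{3/2}$ (which is of the wrong form) and forces a careful interpolation between $\mathcal{D}_\gamma$, $\mathcal{D}_\beta$, $\mathcal{D}_\alpha$ and $\mathcal{D}_E$; this is exactly where the hypothesis $m\in(2/3,1)$ enters, keeping the relevant $k$-series in $\ell^1$.
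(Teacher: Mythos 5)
Your overall skeleton is the right one and matches the paper: symmetrize the two pairings so that no derivative falls on the quadratic factor, then split $\mathbb{NL}_k$ by $k'\in\{k\}$, $\{0\}$, and $\{k'\neq 0,\,k-k'\neq 0\}$; the paper's $T_{0\neq}$, $T_{\neq 0}$, $T_{\neq\neq}$ are exactly your (i), (ii), (iii). Your piece (i) coincides with the paper's $T_{0\neq}$ to the letter (integrate by parts to produce $\p_y^2\phi_0$, then invoke Lemma \ref{lem:d2phi0} and $\mathcal{D}_\beta$).

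Piece (ii) is where there is a genuine gap. Replacing $\phi_k$ by the static Gagliardo--Nirenberg/elliptic bound $\norm{\phi_k}_{L^\infty}\lesssim\abs{k}^{-3/2}\norm{\omega_k}_{L^2}$ turns the trilinear form into three factors that can only land on $\mathcal{E}_0^{1/2}$ (or $\nu^{-1/2}\mathcal{D}_0^{1/2}$) for $\p_y\omega_0$, $\nu^{-1/2}\mathcal{D}_\gamma^{1/2}$ for $\p_y\omega_k$, and then $\omega_k$. But the residual weight on $\omega_k$ is $\abs{k}^{m-5/6}$, and the best available placements are $\nu^{-1/6}\mathcal{D}_\beta^{1/2}$ (giving $\nu^{-2/3}\mathcal{E}^{1/2}\mathcal{D}$, too lossy), $\nu^{-1/4}\mathcal{D}_\beta^{3/8}\mathcal{D}_\gamma^{1/8}$ via Lemma \ref{lem:komegaD} (giving $\nu^{-3/4}$, worse), or $\mathcal{E}_{\neq}^{1/2}$ (giving the wrong shape $\mathcal{E}\mathcal{D}^{1/2}$, which cannot be absorbed in the bootstrap since it is not $\mathcal{E}^{1/2}\mathcal{D}$). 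No apportioning with the tools you list closes this. The paper's fix is to keep $\phi_k$ in $L^\infty$ and control it by the inviscid-damping dissipation via Lemma \ref{lem:dxphiL2t}: $\sum_k\abs{k}^{2m+3}\norm{\phi_k}_{L^\infty}^2\lesssim\mathcal{D}_\tau e^{-2\dnt}$. This places the $\phi_k$ factor into $\mathcal{D}_\tau^{1/2}$ at no $\nu$-cost and in a $\mathcal{D}$-slot, yielding $\nu^{-1/2}\mathcal{E}_0^{1/2}\mathcal{D}_\gamma^{1/2}\mathcal{D}_\tau^{1/2}$ as required. Bypassing $\mathcal{D}_\tau$ here is exactly the wrong move: the inviscid damping is earning its keep in this term.

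For piece (iii), your LH/HL paraproduct with Young and the Lemma \ref{lem:komegaD} interpolations is the right spirit and handles the analogues of the paper's $T_x$ and $T_{y,1}$. But the dangerous sub-term is the one where, after distributing $\p_y$, you are left with $\p_y\phi_{k-k'}\cdot ik'\p_y\omega_{k'}$ paired against $ik\omega_k$, with both $\omega$-frequencies high and comparable. The paper resolves this ($T_{y,2}$) by an explicit $[\,\p_y\phi_{\neq},\abs{\p_x}^{m-2/3}\,]$ commutator: the symmetric "main" part is integrated by parts to trade $\p_y\phi$ for $\p_y^2\phi\simeq\omega$ and is closed against $\mathcal{D}_\beta$ and the $L^\infty$-bound \eqref{ineq:d2phiAlpha}, while the commutator piece gains $\abs{k-k'}$-smallness near the diagonal. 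It is in fact possible to close this sub-term without an explicit commutator by pushing the interpolation $\sum_k\abs{k}^{2m+4/3}\norm{\omega_k}^2\lesssim\nu^{-2/3}\mathcal{D}_\gamma^{1/2}\mathcal{D}_\beta^{1/2}$ onto $\omega_k$ (giving $\nu^{-1/2}\mathcal{E}^{1/2}\mathcal{D}_\gamma^{3/4}\mathcal{D}_\beta^{1/4}$), and a careful split in HL, but as written your proposal is too telegraphic to be checked here, and I would recommend either adopting the paper's commutator or writing out the interpolation split explicitly so the reader can verify that the near-diagonal frequency balance actually closes.
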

These lemmas implies Theorem \ref{thm:NLNzero}:
\begin{proof}[Proof of Theorem \ref{thm:NLNzero}] 
First of all, we observe that by Proposition \ref{prop:lin}, the linear part $\mathcal{L}_{\neq}$ has the bound \siming{Check!?}
\begin{align}
\mathcal{L}_\nq\leq -\siming{8\delta_\ast\mathcal{D}_\nq-8\delta_\ast \nu^{1/3} \sum_{k \neq 0}  e^{2\delta_\ast \nu^{1/3} t} \abs{k}^{2m + 2/3} E_k[\omega_k]}.
\end{align}
Moreover, the estimates presented in Lemma \ref{lem:gatau}, \ref{lem:altau}, \ref{lem:beta} yield that 
\begin{align*}
|\mathcal{NL}_\nq|\leq \frac{C\mathcal{E}^{1/2}}{\sqrt{\nu}}\mathcal{D}.
\end{align*}
So combining the two estimates above yields the conclusion. 
\end{proof}
The proof of each lemma are presented in the next three subsections. We further highlight that the notation $T_{\nq0},T_{0\nq}, T_{\nq\nq}, T_x,T_y,$ etc. will be redefined after the conclusion of each subsection. 

\subsubsection{The $\gamma$ and $\tau$ Contributions}
This subsection is devoted to the proof of Lemma \ref{lem:gatau}. 

We recall the definition of $\mathcal{NL}_{\gamma,\tau}$ \eqref{NL}:  
\begin{align}\label{gatau:NL}
\mathcal{NL}_{\gamma,\tau}=2e^{2\delta_\ast \nu^{1/3}t }\sum_{k\neq 0} |k|^{2m}\Re \brak{\omega_k, (I + c_\tau \mathfrak{J}_k)(\grad^\perp \phi \cdot \grad \omega)_k}=2e^{2\delta_\ast \nu^{1/3}t }(T_{0\nq}+T_{\nq 0}+T_{\nq\nq}),
\end{align}
where the terms $ T_{0\nq},\ T_{\nq 0},\ T_{\nq\nq}$ are defined as follows 
\begin{align*}
  T_{0 \neq}   & :=   \sum_{k \neq 0} |k|^{2m}\Re \brak{ \omega_{k}, (I + c_\tau \mathfrak{J}_k) (\partial_y\phi_0 \siming{ik}\omega_k)}, \\
\ T_{\neq0}   & :=   \sum_{k \neq 0} |k|^{2m} \Re \brak{ \omega_{k}, (I + c_\tau \mathfrak{J}_k) (\siming{ik}\phi_k \partial_y \omega_0)}, \\
  T_{\neq\neq} & := \sum_{\siming{\substack{k,k' \neq 0\\
  k\nq k'}}} |k|^{2m} \Re \brak{ \omega_{k}, (I + c_\tau \mathfrak{J}_k) (\grad_{k-k'}^\perp \phi_{k-k'} \cdot \grad_{k'} \omega_{k'})}. 
\end{align*}
A central challenge is dealing with the nonlinear contributions involving $\mathfrak{J}_k$. 

\noindent
\textbf{Treatment of the $T_{0\neq}$ term in \eqref{gatau:NL}:} \\ 
Through integration by parts, we observe that the first term in the $T_{0\nq}$ is zero. Hence,
\begin{align*}
T_{0 \neq} = \sum_{k \neq 0} |k|^{2m}\Re \brak{ \omega_{k}, c_\tau \mathfrak{J}_k (\partial_y\phi_0 ik \omega_k)}, 
\end{align*}
which is then estimated via H\"older's inequality and Lemma \ref{lem:BoundI}, yielding
\begin{align*}
 | T_{0 \neq}| \lesssim \sum_{k \neq 0} |k|^{2m + 1 \mh{-\delta}} \norm{\omega_{k}}^2_{L^2}  \norm{\partial_y\phi_0}_{L^\infty}. 
\end{align*}
Next 
by Lemma \ref{lem:d2phi0} and  \ref{lem:komegaD}, we have 
\begin{align*}
|T_{0\neq} |\lesssim \nu^{-1/2}\mathcal{E}_0^{1/2} \mathcal{D}_\beta^{3/4} \mathcal{D}_\gamma^{1/4}e^{- 2\dnt-\delta_\ast\nu t}, 
\end{align*}
which is consistent with  \eqref{est:gatau}.

\noindent
\textbf{Treatment of the $T_{\neq 0}$ term in \eqref{gatau:NL}:} \\  
We estimate this term with H\"older's inequality and Lemma \ref{lem:BoundI} to obtain 
\begin{align*}
|T_{\neq0} |& \lesssim \sum_{k\neq 0}\norm{  |k|^{ m} \omega_{k}}_{L^2} \norm{|k|^{m+1}\phi_k}_{L^\infty} \norm{\partial_y \omega_0}_{L^2}, 
\end{align*}
and hence by Lemma \ref{lem:dxphiL2t}, we have
\begin{align*}
|T_{\neq 0}| & \lesssim \nu^{-1/2}\mathcal{E}^{1/2} \mathcal{D}^{1/2}_\tau \mathcal{D}_0^{1/2}e^{- 2\dnt-\delta_\ast\nu t},
\end{align*}
and so this term is consistent with Theorem \ref{thm:NLNzero}.

\noindent
\textbf{Treatment of the $T_{\neq\neq}$ term in \eqref{gatau:NL}:} \\  
We rewrite the term with Lemma \ref{lem:AS} and further divide  as follows 
\begin{align}\n
   T_{\neq\neq} 
  & = \siming{ \sum_{\substack{k,k'\nq0\\ k\nq k'}}  \Re \brak{ (1+c_\tau \mathfrak{J}_k )\abs{k}^m  \omega_{k}, \abs{k}^m(\grad_{k-k'}^\perp \phi_{k-k'}\cdot \grad_{k'}\omega_{k'})}}
 \\ \n
& = \sum_{\siming{\substack{k,k' \neq 0\\
  k\nq k'}}} |k|^{2m} \Re \brak{(1+ c_\tau \mathfrak{J}_k) \omega_{k},  \partial_y( i(k-k') \phi_{k-k'} \omega_{k'})}   - \sum_{\siming{\substack{k,k' \neq 0\\
  k\nq k'}}} |k|^{2m} \Re \brak{ (1+ c_\tau \mathfrak{J}_k )\omega_{k}, ik( \partial_y \phi_{k-k'} \omega_{k'})} \\
  & =: T_{ x} + T_{ y}.\label{gatau_Tnqnq} 
\end{align}
To estimate $T_{x}$ we use integration by parts, Cauchy-Schwarz, and Lemma \ref{lem:BoundI}, \ref{lem:CommIpy}, \ref{lem:H} to obtain the following, using that $m \in (1/2,1)$, 
\begin{align*}
T_{ x} & \siming{\lesssim \sum_{\siming{\substack{k,k' \neq 0\\
  k\nq k'}}} |k|^{2m} (1+\|\mathfrak{J}_k\|_{L^2\rightarrow L^2}+|k|^{-1}\|[\mathfrak{J}_k,\pa_y]\|_{L^2\rightarrow L^2})\norm{\na_k \omega_{k}}_{L^2}  \norm{(k-k')\phi_{k-k'}}_{L^\infty} \norm{\omega_{k'}}_{L^2} }\\
    & \lesssim \sum_{\siming{\substack{k,k' \neq 0\\
  k\nq k'}}} \abs{k}^{m} \norm{\grad_k \omega_{k}}_{L^2} \left(\abs{k-k'}^m + \abs{k'}^m \right)  \norm{(k-k')\phi_{k-k'}}_{L^\infty} \norm{\omega_{k'}}_{L^2} \\
& \lesssim \nu^{-1/2}\mathcal{D}_\gamma^{1/2} \left( \sum_{k \neq 0} \norm{\abs{k-k'}^{1 + m }\phi_{k-k'}}_{L^\infty}^2 \right)^{1/2} \left( \sum_{k \neq 0}\norm{\omega_{k}}_{L^2} \right) e^{-\dnt}\\
& \quad + \nu^{-1/2}\mathcal{D}_\gamma^{1/2} \left( \sum_{k \neq 0} \norm{\abs{k-k'} \phi_{k-k'}}_{L^\infty} \right) \left( \sum_{k \neq 0}\abs{k}^{m}\norm{\omega_{k}}_{L^2}^2 \right) e^{-\dnt}\\
& \lesssim \frac{\mathcal{E}^{1/2}}{\sqrt{\nu}} \mathcal{D}_\tau^{1/2} \mathcal{D}_{\gamma}^{1/2}e^{-3\dnt},
\end{align*}
where in the last line we used $m > 1/2$ to convert the $\ell^1$ sums into $\ell^2$ and then used Lemma \ref{lem:dxphiL2t}.
This is hence consistent with  \eqref{est:gatau} and Theorem \ref{thm:NLNzero}. 

To estimate $T_{ y}$, we again use Cauchy-Schwarz, and Lemma \ref{lem:BoundI} to obtain the following and introduce a frequency decomposition 
\begin{align*}
T_{y} & \lesssim \sum_{ \substack{k,k' \neq 0\\
  k\nq k'}} \left(\mathbf{1}_{\abs{k-k'} < \abs{k'}} + \mathbf{1}_{\abs{k-k'} \geq \abs{k'}} \right) \norm{\abs{k}^{m+1/3} \omega_{k}}_{L^2}  \abs{k}^{m + 2/3 \mh{- \delta}} \norm{\partial_y \phi_{k-k'}}_{L^\infty} \norm{\omega_{k'}}_{L^2} \\
& =: T_{y;LH} + T_{ y;HL}.
\end{align*}
Estimating the $T_{y;LH}$ terms we have by Lemma \ref{lem:komegaD} 
\begin{align*}
T_{y;LH} & \lesssim  \sum_{ \substack{k,k' \neq 0\\
  k\nq k'}}  \mathbf{1}_{\abs{k-k'} < \abs{k'}}  \norm{\abs{k}^{m+1/3} \omega_{k}}_{L^2}  \abs{k'}^{m + 2/3\mh{ - \delta}} \norm{\partial_y \phi_{k-k'}}_{L^\infty} \norm{\omega_{k'}}_{L^2} \\
& \lesssim \nu^{-1/3-1/6} \mathcal{E}^{1/2}  \mathcal{D}_\beta^{3/4} \mathcal{D}_\gamma^{1/4}e^{-3\dnt}, 
\end{align*}
where in the last line we used that by the Gagliardo-Nirenberg-Sobolev inequality and $m > 1/2$  we have, 
\begin{align*}
\sum_{k \neq 0} \norm{ \partial_y \phi_{k}}_{L^\infty} & \lesssim \sum_{k\nq 0} \norm{ \partial_y^2 \phi_{k}}_{L^2}^{1/2}\norm{\partial_y \phi_{k}}_{L^2}^{1/2} \lesssim \mathcal{E}^{1/2}e^{-\dnt}. 
\end{align*}
Therefore, $T_{y;LH}$ is consistent with Theorem \ref{thm:NLNzero}.
For $T_{y;HL}$ we instead make the estimate
\begin{align*}
T_{y;HL}  & \lesssim \sum_{ \substack{k,k' \neq 0\\
  k\nq k'}} \mathbf{1}_{\abs{k-k'} \geq \abs{k'}} \norm{\abs{k}^{m+1/3} \omega_{k}}_{L^2}  \abs{k-k'}^{m + 2/3\mh{ - \delta}} \norm{\partial_y \phi_{k-k'}}_{L^\infty} \norm{\omega_{k'}}_{L^2}. 
\end{align*} 
Next, we note that by the Gagliardo-Nirenberg-Sobolev inequality,  H\"older's inequality, elliptic regularity, and Lemma \ref{lem:dxphiL2t} we have, 
\begin{align*} 
\sum_{k\neq0} \abs{k}^{m+2/3 \mh{-\delta}} \norm{ \partial_y \phi_{k}}_{L^\infty} & \lesssim \sum_{k\nq 0} \abs{k}^{-1/2\mh{-\delta}} \abs{k}^{m} \norm{ \abs{k}^{1/3} \partial_y^2 \phi_{k}}_{L^2}^{1/2}\norm{\abs{k}^{5/6} \abs{k}^{-1/3}\partial_y \phi_{k}}_{L^2}^{1/2} \\
& \lesssim \nu^{-1/12-1/6} \mathcal{D}_\beta^{1/4} \mathcal{D}_{\tau \alpha}^{1/4}e^{-\dnt}. 
\end{align*}
Therefore, 
\begin{align*}
T_{y;HL}  & \lesssim \nu^{-5/12} \mathcal{E}^{1/2} \mathcal{D}_\beta^{3/4} \mathcal{D}_{\tau \alpha}^{1/4}e^{-3\dnt}, 
\end{align*}
which is consistent with  \eqref{est:gatau} and Theorem \ref{thm:NLNzero}.  This completes the treatment of the $T_{\neq\neq}$ terms. 

\ifx
Next we consider the $T_{\nq\nq;\gamma }$ term in \eqref{gatau_Tnqnq}. We observe that thanks to the boundary condition $\phi_k\big|_{y=\pm 1}=\omega_k\big|_{y=\pm 1}=0$, there is a null structure (divergence-free condition),
\begin{align}
 \sum_{\substack{k,k'\nq0\\ k\nq k'}}  \Re \brak{ \abs{k}^m  \omega_{k},  \grad_{k-k'}^\perp \phi_{k-k'}\cdot \grad_{k'}(|k'|^m\omega_{k'}) } =0.
\end{align}Hence  the $T_{\neq\neq;\gamma}$ term can be rephrased and divided as follows
\begin{align*}
T_{\neq\neq;\gamma} & = \sum_{ \substack{k,k' \neq 0\\
  k\nq k'}}  \Re \brak{ \abs{k}^m \omega_{k}, \lf(\abs{k}^m - \abs{k'}^m\rg) \bigg(\partial_y \phi_{k-k'} ik'\omega_{k'} - i(k-k') \phi_{k-k'} \partial_y\omega_{k'}\bigg) }  
  =: T_{\gamma y} + T_{\gamma x}.
\end{align*}
Consider the $T_{\gamma y}$ term first, which is the more delicate term to estimate.
We first use a frequency decomposition
\begin{align*}
T_{\gamma y} & = \sum_{ \substack{k,k' \neq 0\\
  k\nq k'}} (\mathbf{1}_{\abs{k-k'} < \abs{k'}/2} + \mathbf{1}_{\abs{k-k'} \geq \abs{k'}/2} ) \Re \brak{ \abs{k}^m \omega_{k}, (\abs{k}^m - \abs{k'}^m) \partial_y \phi_{k-k'} ik'\omega_{k'} } \\
& =: T_{\gamma y;LH} + T_{\gamma y;HL}. 
\end{align*}
For the $T_{\gamma y;HL}$ term the commutator is not relevant. Note that on the support of the summand, we have $\abs{k'} \lesssim \abs{k-k'}$, and hence we have
\begin{align*}
T_{\gamma y;HL} \lesssim \sum_{ \substack{k,k' \neq 0\\
  k\nq k'}}  \mathbf{1}_{\abs{k-k'} \geq \abs{k'}/2} \norm{\abs{k}^{m} \omega_k}_{L^2} \norm{ \abs{k-k'}^{2/3+m}\partial_y \phi_{k-k'}}_{L^\infty} \norm{\abs{k'}^{1/3}\omega_{k'}}_{L^2}. 
\end{align*}
Note that by Lemma \ref{lem:dxphiL2t},
\begin{align*}
\left(\sum_{k\neq 0} |k|^{2m}\norm{ \abs{k}^{2/3}\partial_y \phi_{k}}_{L^\infty}^2\right)^{1/2} & \lesssim \left( \sum_{k\neq 0} \abs{k}^{2m} \norm{ k (\abs{k}^{-1/3} \partial_y) \phi_{k}}_{L^\infty}^2 \right)^{1/2}  
  \lesssim \nu^{-1/3}\mathcal{D}_{\tau \alpha}^{1/2}e^{-\dnt}. 
\end{align*}
Therefore, by Cauchy-Schwarz and $m > 1/2$, we have 
\begin{align*}
T_{\gamma y;HL} \lesssim \nu^{-1/2} \mathcal{E}^{1/2} \mathcal{D}_{\tau \alpha}^{1/2} \mathcal{D}_{\beta}^{1/2}e^{-3\dnt}, 
\end{align*}
which is consistent with \eqref{est:gatau} and Theorem \ref{thm:NLNzero}. 

For $T_{\gamma y;LH}$ we use the following estimate, which holds on the support of the summand
\begin{align*}
\abs{\abs{k}^m - \abs{k'}^m} \lesssim \abs{k-k'} \abs{k'}^{m-1}
\end{align*}
to obtain 
\begin{align*}
T_{\gamma y;LH} & \lesssim \sum_{ \substack{k,k' \neq 0\\
  k\nq k'}} \mathbf{1}_{\abs{k-k'} < \abs{k'}/2}  \norm{\abs{k}^{m} \omega_k}_{L^2} \norm{ \abs{k-k'}\partial_y \phi_{k-k'}}_{L^\infty} \norm{\abs{k'}^{m} \omega_{k'}}_{L^2} \\
& \lesssim \sum_{k,k' \neq 0} \mathbf{1}_{\abs{k-k'} < \abs{k'}/2}  \norm{\abs{k}^{m} \omega_k}_{L^2} \norm{ \abs{k-k'}^{2/3 }\partial_y \phi_{k-k'}}_{L^\infty} \norm{\abs{k'}^{m+1/3} \omega_{k'}}_{L^2}.  
\end{align*}
Then we note that by Lemma \ref{lem:dxphiL2t} and $m > 1/2$
\begin{align*}
  \sum_k \norm{ k (\abs{k}^{-1/3}\partial_y) \phi_k}_{L^\infty} & \lesssim \left(\sum_k |k|^{2m} \norm{ k (\abs{k}^{-1/3}\partial_y) \phi_k}_{L^\infty}^2\right)^{1/2} \\
  & \lesssim \nu^{-1/3}\mathcal{D}_{\tau\alpha}^{1/2}e^{-\dnt}. 
\end{align*}
Therefore, we have
\begin{align*}
T_{\gamma y;HL} & \lesssim \nu^{-1/2} \mathcal{E}^{1/2} \mathcal{D}_{\tau \alpha}^{1/2} \mathcal{D}_{\beta}^{1/2}e^{-3\dnt}, 
\end{align*}
which is consistent with \eqref{est:gatau} and Theorem \ref{thm:NLNzero}, completing the treatment of the $T_{\gamma y}$ term.

We turn next to the $T_{\gamma x}$ term, which is more straightforward.
Indeed, we do not need a commutator and instead obtain using H\"older's inequality followed by Lemma \ref{lem:dxphiL2t} (using that $m > 1/2$)
\begin{align*}
  T_{\gamma x} & \lesssim \sum_{ \substack{k,k' \neq 0\\
  k\nq k'}} \norm{\abs{k}^m \omega_k}_{L^2} \norm{\abs{k-k'} \phi_{k-k'}}_{L^\infty} \norm{ \abs{k'}^m\partial_y\omega_{k'} }_{L^2} \\
& \quad + \sum_{\substack{k,k' \neq 0\\
  k\nq k'}} \norm{\abs{k}^m \omega_k}_{L^2} \norm{\abs{k-k'}^{1+m} \phi_{k-k'}}_{L^\infty} \norm{\partial_y\omega_{k'} }_{L^2} \\
& \lesssim \nu^{-1/2}\mathcal{E}^{1/2} \mathcal{D}_\tau^{1/2} \mathcal{D}^{1/2}_\gamma e^{-3\dnt}, 
\end{align*}
which is consistent with  \eqref{est:gatau} and Theorem \ref{thm:NLNzero}.

\fi
\subsubsection{The $\alpha$ and $\alpha\tau$ Contributions}
As in the previous subsection we begin with the same decomposition of the term $\mathcal{NL}_{\al,\tau\al}$ \eqref{NL}: 
\begin{align}
\frac{1}{2c_\al}e^{-2\dnt}&\mathcal{NL}_{\al,\tau\al}=T_{0\nq}+T_{\nq0}+T_{\nq\nq} ,\label{altau:NL}\\
\n  T_{0 \neq}   & :=   \sum_{k\neq 0} |k|^{2m}\Re \brak{ \nu^{1/3} \abs{k}^{-1/3} \p_y \omega_{k}, (I + c_\tau \mathfrak{J}_k) \nu^{1/3} \abs{k}^{-1/3} \p_y (\partial_y\phi_0 ik \omega_k)} ,\\
\n T_{\neq0}   & :=   \sum_{k\neq 0} |k|^{2m} \Re \brak{ \nu^{1/3} \abs{k}^{-1/3} \p_y \omega_{k}, (I + c_\tau \mathfrak{J}_k) \nu^{1/3} \abs{k}^{-1/3} \p_y (ik\phi_k \partial_y \omega_0)} ,\\
 \n T_{\neq\neq} &: = \sum_{ \substack{k,k' \neq 0\\
  k\nq k'}} |k|^{2m} \Re \brak{ \nu^{1/3} \abs{k}^{-1/3} \p_y \omega_{k}, (I + c_\tau \mathfrak{J}_k) \nu^{1/3} \abs{k}^{-1/3} \p_y (\grad_{k-k'}^\perp \phi_{k-k'} \cdot \grad_k \omega_{k'})}. 
\end{align}
We note that all the $T_{(\ast)}$ are refreshed in this subsection.

\noindent
\textbf{Treatment of the $T_{0 \neq}$ term in \eqref{altau:NL}:} \\
Analogous to the treatment in the previous section we have by integration by parts (using that $\omega_k$ vanishes along the boundaries)
\begin{align}\n
T_{0 \neq}   & = c_\tau   \sum_{k\neq 0} |k|^{2m}\Re \brak{ \nu^{1/3} \abs{k}^{-1/3} \p_y \omega_{k}, \mathfrak{J}_k \nu^{1/3} \abs{k}^{-1/3} \p_y (\partial_y\phi_0 ik \omega_k)} \\ \n
& \quad +\sum_{k\neq 0} |k|^{2m}\Re \brak{ \nu^{1/3} \abs{k}^{-1/3} \p_y \omega_{k}, \nu^{1/3} \abs{k}^{-1/3} \partial_y^2\phi_0 ik \omega_k)} \\ \n
& \quad +\sum_{k\neq 0} |k|^{2m}\Re \brak{ \nu^{1/3} \abs{k}^{-1/3} \p_y \omega_{k}, \nu^{1/3} \abs{k}^{-1/3} \partial_y\phi_0 ik \partial_y \omega_k)}\\
&=:T_{0\neq;1}+T_{0\neq;2}+T_{0\neq;3}.\label{altau_0nq}
\end{align}
The last term vanishes since $\partial_y \phi_0$ is purely real
\begin{align*}
T_{0\nq;3}=\sum_{k\neq 0} |k|^{2m}\Re \ ik\int  |\nu^{1/3} \abs{k}^{-1/3} \p_y \omega_{k}|^2\partial_y\phi_0 dy= 0. 
\end{align*}

For the $T_{0\neq;2}$ term in \eqref{altau_0nq}, we have
\begin{align*}
T_{0\neq;2} \lesssim \nu^{2/3}\norm{\abs{k}^{m} \p_y \omega_k }_{L^2}\norm{\abs{k}^{m+1/3}\omega_k }_{L^2} \norm{\partial_y^2\phi_0}_{L^\infty}. 
\end{align*}
Since we have that $\pa_{yy}\phi_0=\omega_0$, $\|\pa_{yy}\phi_0\|_{L^\infty}=\|\omega_0\|_{L^\infty}$. By Gagliardo-Nirenberg-Sobolev, we have 
\begin{align*}
\norm{\partial_y^2\phi_0}_{L^\infty} & \lesssim \norm{\partial_y\omega_0}_{L^2}^{1/2}\norm{\omega_0}_{L^2}^{1/2}  \lesssim \nu^{-1/6} \mathcal{E}_0^{1/2}e^{-\delta_\ast\nu t}. 
\end{align*}
Therefore,
\begin{align*}
T_{0\neq;2} \lesssim \nu^{-1/6}\mathcal{E}_0^{1/2} \mathcal{D}_{\gamma}^{1/2} \mathcal{D}_\beta^{1/2}e^{-2\dnt-\delta_\ast\nu t}, 
\end{align*}
which is consistent with \eqref{est:altau} and Theorem \ref{thm:NLNzero}.

For the $T_{0\neq;1}$ term in \eqref{altau_0nq}, we first apply the symmetry of $\mathfrak{J}_k$ (Lemma \ref{lem:AS}) and integration by parts (using that $\omega_k$ vanishes on the boundary), to obtain that 
\ifx
\begin{align*}
T_{0\neq;1} & =  c_\tau\sum_{k\neq 0} |k|^{2m}\Re \brak{ \nu^{1/3} \abs{k}^{-1/3} \mathfrak{J}_k\p_y \omega_{k},   \nu^{1/3} \abs{k}^{-1/3} \p_y (\partial_y\phi_0 ik \omega_k)} \\
&\lesssim c_{\tau}\sum_{k\neq 0} \norm{\nu^{1/3} \abs{k}^{m-1/3} \p_y \omega_{k}}_{L^2} \nu^{1/3} \abs{k}^{m+2/3}\lf( \norm{\omega_k}_{L^2} \norm{\partial_y^2\phi_0}_{L^\infty}+\norm{\pa_y\omega_k}_{L^2} \norm{\partial_y\phi_0}_{L^\infty}\rg) \\
& \lesssim  c_\tau \frac{\mathcal{E}^{1/2}}{ {\nu^{1/6}}}\sum_{k \neq 0} \nu^{1/3} |k|^{2/3} \abs{k}^{2m} E_k+???\\
&\lesssim c_\tau \frac{\mathcal{E}^{1/2}}{\myr{\nu^{1/6}}}\mathcal{D}_E,  
\end{align*}\fi
\siming{ 
\begin{align*}
T_{0\neq;1} & =  c_\tau\sum_{k\neq 0} |k|^{2m}\Re \brak{ \nu^{1/3} \abs{k}^{-1/3} \mathfrak{J}_k\p_y \omega_{k},   \nu^{1/3} \abs{k}^{-1/3} \p_y (\partial_y\phi_0 ik \omega_k)} \\
& = - c_\tau\sum_{k\neq 0} |k|^{2m}\Re \brak{ \nu^{1/3} \abs{k}^{-1/3}\p_y \mathfrak{J}_k\p_y \omega_{k},   \nu^{1/3} \abs{k}^{-1/3}  (\partial_y\phi_0 ik \omega_k)} 
\end{align*}
Then we use Lemma \ref{lem:BoundI}, \ref{lem:CommIpy} to estimate the commutators,
\begin{align*}
T_{0\nq;1}&\lesssim c_{\tau}\sum_{k\neq 0} \nu^{1/3} \abs{k}^{m-1/3}\lf(\|\mathfrak{J}_k\|_{L^2\rightarrow L^2} +|k|^{-1}\|[\mathfrak{J}_k,\pa_y]\|_{L^2\rightarrow L^2}\rg)\norm{\na_k\p_{ y} \omega_{k}}_{L^2}\\
&\qquad\qquad\times \nu^{1/3} \abs{k}^{m+2/3}\norm{ \omega_k}_{L^2} \norm{\partial_y\phi_0}_{L^\infty}  \\
&\lesssim \frac
{c_{\tau}}{\nu^{1/2}}\sum_{k\neq 0} \lf(\nu\norm{\nu^{1/3} \abs{k}^{m-1/3}\na_k\p_{ y} \omega_{k}}_{L^2}^2\rg)^{1/2}\lf( \nu \norm{\abs{k}^{m+1} \omega_k}_{L^2}^2\rg)^{1/4}\\
&\qquad\qquad\times\lf(\nu^{1/3} |k|^{2m+2/3}\norm{\omega_k}_{L^2}^2 \rg)^{1/4}\norm{\partial_y\phi_0}_{L^\infty}.
\end{align*}
Now we recall Lemma \ref{lem:d2phi0} and the definition of $ \mathcal{D}_{\al},\,\mathcal{D}_{\beta},\, \mathcal{D}_{\gamma}$ \eqref{def:Dks}, \eqref{def:Dkscl}, and then obtain
\begin{align}
T_{0\nq;1}&\lesssim   \frac{\mathcal{E}^{1/2}}{\myr{\nu^{1/2}}}\mathcal{D}_\al^{1/2}\mathcal{D}_\beta^{1/4}\mathcal{D}_{\gamma}^{1/4}e^{-2\dnt-\delta_\ast \nu t}, \label{prototype} 
\end{align} 
}
which is consistent with Theorem \ref{thm:NLNzero}. This completes the treatment of the $T_{0\neq}$ term.

\noindent
\textbf{Treatment of the $T_{\neq 0}$ term in \eqref{altau:NL}:} \\
We use Lemma \ref{lem:dxphiL2t} to obtain
\begin{align*}
T_{\neq0}   & \lesssim   \sum_{k\neq 0} |k|^{2m}\norm{\nu^{1/3} \abs{k}^{-1/3} \p_y^2 \omega_{k}}_{L^2} \norm{\abs{k}^{2/3+m} \phi_k}_{L^\infty} \norm{\nu^{1/3}\partial_y \omega_0}_{L^2}\lesssim \nu^{-1/2} \mathcal{E}_0^{1/2} \mathcal{D}^{1/2}_\gamma\mathcal{D}_\tau^{1/2}, 
\end{align*}
which is consistent with Theorem \ref{thm:NLNzero}. This completes the treatment of the $T_{\neq0}$ term.

\noindent
\textbf{Treatment of the $T_{\neq\neq}$ term in \eqref{altau:NL}:} \\
The most troublesome term is the $T_{\neq\neq}$ term, which we treat now.
We apply the symmetric property of $\mathfrak{J}_k$ (Lemma \ref{lem:AS}) to rewrite the term as follows, 
\begin{align*}
T_{\neq\neq} & = \sum_{ \substack{k,k' \neq 0\\
  k\nq k'}} \Re \brak{ \nu^{1/3} \abs{k}^{m-1/3} (1+c_\tau\mathfrak{J}_k) \p_y \omega_{k},   \nu^{1/3} \abs{k}^{m-1/3} \p_y (\grad_{k-k'}^\perp \phi_{k-k'} \cdot \grad_{k'} \omega_{k'})} .
\end{align*}\ifx
  \\
& \quad + \sum_{ \substack{k,k' \neq 0\\
  k\nq k'}} \Re \brak{ \nu^{1/3} \abs{k}^{m-1/3} \p_y \omega_{k},\nu^{1/3} \myr{(\abs{k}^{m-1/3}-\abs{k'}^{m-1/3})} \p_y (\grad_{k-k'}^\perp \phi_{k-k'} \cdot \grad_{k'} \omega_{k'})} \\
& \quad + \sum_{ \substack{k,k' \neq 0\\
  k\nq k'}} \Re \brak{ \nu^{1/3} \abs{k}^{m-1/3} \p_y \omega_{k},  \p_y \lf(\grad_{k-k'}^\perp \phi_{\myr{k-k'}} \cdot \grad_{k'} (\nu^{1/3} \abs{k'}^{m-1/3} \omega_{k'})\rg)} \\
& = T_{\neq\neq;\tau} + T_{\neq\neq;\alpha 1}+ T_{\neq\neq;\alpha 2 }. \siming{The last two terms are not estimated.}\fi
The estimate of $T_{\neq\neq}$ begins by integrating by parts in the term involving $\partial_y\phi_{k-k'} ik'\omega_{k'}$, and applying H\"older's inequality,
\begin{align*}
T_{\neq\neq} =&- \sum_{ \substack{k,k' \neq 0\\
  k\nq k'}} \Re \brak{ \nu^{1/3} \abs{k}^{m-1/3} (1+c_\tau\mathfrak{J}_k)\p_y \omega_{k},   \nu^{1/3} \abs{k}^{m-1/3} \p_y (i(k-k') \phi_{k-k'} \pa_y \omega_{k'})} \\
  & -\sum_{ \substack{k,k' \neq 0\\
  k\nq k'}} \Re \brak{ \nu^{1/3} \abs{k}^{m-1/3} \p_y(1+c_\tau\mathfrak{J}_k)\p_y \omega_{k},  \nu^{1/3} \abs{k}^{m-1/3}  (\pa_y \phi_{k-k'}  i{k'} \omega_{k'})} \\
 \lesssim\mh{_\delta}& \sum_{ \substack{k,k' \neq 0\\
  k\nq k'}} \norm{\nu^{1/3} |k|^{m-1/3} (1+c_\tau\mathfrak{J}_k)\p_y \omega_{k}}_{L^2}  \nu^{1/3} |k|^{m-1/3\mh{-\delta}} \\
   & \quad\quad\quad \times \left( \norm{(k-k') \partial_y \phi_{k-k'}}_{L^\infty} \norm{\partial_y \omega_{k'}}_{L^2} + \norm{(k-k')\phi_{k-k'}}_{L^\infty} \norm{\partial_y^2 \omega_{k'}}_{L^2}\right) \\
&  + \sum_{ \substack{k,k' \neq 0\\
  k\nq k'}}\siming{ \norm{\nu^{1/3} |k|^{m-1/3} \pa_y(1+c_\tau\mathfrak{J}_k)\p_y \omega_{k}}_{L^2}  \nu^{1/3} |k|^{m-1/3\mh{-\delta}}\norm{\partial_y \phi_{k-k'}}_{L^\infty} \norm{k' \omega_{k'}}_{L^2}} 
=:  \sum_{j=1}^3 T_{\nq\nq; j}. 
\end{align*}
\siming{
We note that to justify that $\mathfrak{J}_k(\pa_y\omega_k)$ is well-defined on the boundary, one can apply the Lemma \ref{lem:CommIpy} and \ref{lem:H} to get $\|\pa_y \mathfrak{J}_k f_k\|_{L^2}\leq \| \mathfrak{J}_k\pa_y f_k\|_{L^2}+\|[\pa_y, \mathfrak{J}_k] f_k\|_{L^2}\leq \|f_k\|_{H_k^1}<\infty$.}
To estimate the term $T_{\nq\nq;1},$ we first apply Lemma \ref{lem:BoundI}, and introduce a frequency decomposition
\begin{align*}
T_{\nq\nq; 1} & \lesssim \sum_{ \substack{k,k' \neq 0\\
  k\nq k'}} (\mathbf{1}_{\abs{k-k'} < \abs{k'}/2} + \mathbf{1}_{\abs{k-k'} \geq \abs{k'}/2}) \norm{\nu^{1/3} |k|^{m-1/3} \p_y \omega_{k}}_{L^2}  \nu^{1/3} |k|^{m-1/3\mh{-\delta}} \\
& \qquad \times \norm{(k-k') \partial_y \phi_{k-k'}}_{L^\infty} \norm{\partial_y \omega_{k'}}_{L^2} \\ 
& = : T_{1;LH} + T_{1;HL}. 
\end{align*}
For the $LH$ term we have 
\begin{align*}
T_{1;LH} 
& \lesssim \sum_{\substack{k,k' \neq 0\\
  k\nq k'}} \mathbf{1}_{\abs{k-k'} < \abs{k'}/2} \norm{\nu^{1/3} |k|^{m-1/3} \p_y \omega_{k}}_{L^2} \norm{\abs{k-k'}\mh{^{1-\delta}}\partial_y \phi_{k-k'}}_{L^\infty}\frac{1}{\nu^{1/6}}\norm{\nu^{1/2} \abs{k'}^{m-1/3} \partial_y \omega_{k'}}_{L^2}, 
\end{align*}
Note that for $m >1/2$ we have by Lemma \ref{lem:dxphiL2t}, 
\begin{align*}
\sum_{k\neq 0} \norm{\abs{k}\mh{^{1-\delta}} \partial_y \phi_{k}}_{L^\infty} & \lesssim \left(\sum_{k\neq 0} \abs{k}^{2m + 3 \mh{- 2\delta}} \norm{\abs{k}^{-1/3}\partial_y \phi_{k}}_{L^\infty}^2 \right)^{1/2}  \lesssim \nu^{-1/3} \mathcal{D}_{\tau \alpha}^{1/2}e^{-\dnt},
\end{align*}
Therefore,
\begin{align*}
T_{1;LH} \lesssim \myr{\nu^{-1/2}} \mathcal{E}^{1/2} \mathcal{D}_{\tau \alpha}^{1/2}\mathcal{D}_{\gamma}^{1/2}e^{-3\dnt},
\end{align*}
which is consistent with \eqref{est:altau} and Theorem \ref{thm:NLNzero}. 
For the $HL$ contribution note that we have instead 
\begin{align*}
T_{1;HL} & \lesssim \sum_{ \substack{k,k' \neq 0\\
  k\nq k'}} \norm{\nu^{1/3} \abs{k}^{m-1/3} \p_y \omega_{k}}_{L^2} \norm{\nu^{1/3} \abs{k-k'}^{m+2/3\mh{-\delta}} \partial_y \phi_{k-k'}}_{L^\infty} \norm{\partial_y \omega_{k'}}_{L^2}. 
\end{align*}
By Lemma \ref{lem:dxphiL2t} we have
\begin{align*}
T_{1;HL} \lesssim \nu^{-1/2}\mathcal{E}^{1/2} \mathcal{D}_{\tau\alpha}^{1/2} \mathcal{D}^{1/2}_\gamma e^{-3\dnt},
\end{align*}
which is consistent with \eqref{est:altau} and Theorem \ref{thm:NLNzero}. This completes the treatment of $T_{\nq\nq; 1}$.

We turn to $T_{\nq\nq; 2}$ next, which we again begin with a frequency decomposition, 
\begin{align*}
T_{\nq\nq; 2} &= \sum_{ \substack{k,k' \neq 0\\
  k\nq k'}} (\mathbf{1}_{\abs{k-k'} < \abs{k'}/2} + \mathbf{1}_{\abs{k-k'} \geq \abs{k'}/2}) \norm{\nu^{1/3} \abs{k}^{m-1/3} \p_y \omega_{k}}_{L^2}  \nu^{1/3} \abs{k}^{m-1/3\mh{-\delta}} \\
& \qquad \times \norm{(k-k')\phi_{k-k'}}_{L^\infty} \norm{\partial_y^2 \omega_{k'}}_{L^2} \\
& =: T_{2;LH} + T_{2;HL}. 
\end{align*}
To treat the first term we use that on the support of the summand $\abs{k-k'} + \abs{k} \lesssim \abs{k'}$ to obtain 
\begin{align*}
T_{2;LH} & \lesssim \sum_{ \substack{k,k' \neq 0\\
  k\nq k'}} \mathbf{1}_{\abs{k-k'} < \abs{k'}/2} \norm{\nu^{1/3} \abs{k}^{m-1/3} \p_y \omega_{k}}_{L^2}   \norm{\abs{k-k'} \phi_{k-k'}}_{L^\infty} \norm{\nu^{1/3}\abs{k'}^{m-1/3}\partial_y^2 \omega_{k'}}_{L^2}. 
\end{align*}
Using Lemma \ref{lem:dxphiL2t} and $m > 1/2$ we have
\begin{align*}
T_{2;LH} \lesssim \nu^{-1/2} \mathcal{E}_{\neq}^{1/2} \mathcal{D}_\tau^{1/2} \mathcal{D}_\alpha^{1/2}e^{-3\dnt}, 
\end{align*}
which is consistent with Theorem \ref{thm:NLNzero}. 
For $T_{2;HL}$ notice that we similarly have (again using Lemma \ref{lem:dxphiL2t} and $m > 1/2$), 
\begin{align*}
T_{2;HL} & \lesssim \sum_{ \substack{k,k' \neq 0\\
  k\nq k'}} \mathbf{1}_{\abs{k-k'} \geq \abs{k'}/2} \norm{\nu^{1/3} \abs{k}^{m-1/3} \p_y \omega_{k}}_{L^2}   \norm{\abs{k-k'}^{m + 1\mh{ - \delta}} \phi_{k-k'}}_{L^\infty} \norm{\partial_y (\abs{k'}^{-1/3} \nu^{1/3}\partial_y) \omega_{k'}}_{L^2} \\
& \lesssim \nu^{-1/2} \mathcal{E}^{1/2} \mathcal{D}^{1/2}_\tau \mathcal{D}^{1/2}_\gamma e^{-3\dnt},
\end{align*}
which is consistent with \eqref{est:altau}, Theorem \ref{thm:NLNzero}. 

\ifx Turn to $T_3$ next, for which we mirror the treatment of the $\gamma$ terms.
As in previous steps we begin with a frequency decomposition
\begin{align*}
T_3 & \lesssim \sum_{ \substack{k,k' \neq 0\\
  k\nq k'}} \left( \mathbf{1}_{\abs{k-k'} < \abs{k'}/2} + \mathbf{1}_{\abs{k-k'} \geq \abs{k'}/2} \right) \norm{\nu^{1/3} \abs{k}^{m-1/3} \p_y \omega_{k}}  \nu^{1/3} k^{m-1/3\mh{-\delta}} \norm{\partial_y^2 \phi_{k-k'}}_{L^\infty} \norm{k' \omega_{k'}}_{L^2} \\
& = T_{3;LH} + T_{3;HL}. 
\end{align*}
To estimate $T_{3;HL}$ we have (using $m > 2/3$, Gagliardo-Nirenberg-Sobolev, and elliptic regularity followed by Cauchy-Schwarz)
\begin{align*}
T_{3;HL} & \lesssim \sum_{ \substack{k,k' \neq 0\\
  k\nq k'}} \mathbf{1}_{\abs{k-k'} \geq \abs{k'}/2} \norm{\nu^{1/3} \abs{k}^{m-1/3} \p_y \omega_{k}}_{L^2}  \nu^{1/3} k^{m-1/2\mh{-\delta}} \norm{\partial_y \omega_{k-k'}}_{L^2}^{1/2} \norm{\abs{k-k'}^{1/3} \omega_{k-k'}}_{L^2}^{1/2} \norm{ \abs{k'}^{m+1/3} \omega_{k'}}_{L^2} \\
& \lesssim \nu^{1/3-1/4-1/12-1/6} \mathcal{E}^{1/2} \mathcal{D}_\gamma^{1/4} \mathcal{D}_{\beta}^{3/4} =  \nu^{-1/6} \mathcal{E}^{1/2} \mathcal{D}_\gamma^{1/4} \mathcal{D}_{\beta}^{3/4},
\end{align*}
which is consistent with Theorem \ref{thm:NLNzero}. 
To treat $T_{3;LH}$, first note that for any $\delta' > 0$, by Gagliardo-Nirenberg-Sobolev, elliptic regularity, Cauchy-Schwarz and $m > 2/3$, 
\begin{align*}
\sum_k \norm{\partial_y^2 \phi_{k}}_{L^\infty} & \lesssim \sum_k \abs{k}^{-1/2-\delta'} \abs{k}^{2/3-\delta'}\norm{k^{-1/3}\partial_y \omega_{k}}_{L^2}^{1/2} \norm{\omega_k}_{L^2}^{1/2} \\
& \lesssim \nu^{-1/6} \mathcal{E}^{1/2}. 
\end{align*}
Therefore, 
\begin{align*}
T_{3;LH} & \lesssim \sum_{ \substack{k,k' \neq 0\\
  k\nq k'}} \mathbf{1}_{\abs{k-k'} < \abs{k'}/2} \nu^{2/3} \norm{\abs{k}^{m} \p_y \omega_{k}}_{L^2} \norm{\partial_y^2 \phi_{k-k'}}_{L^\infty} \norm{\abs{k'}^{m+1/3} \omega_{k'}}_{L^2} \\
& \lesssim \nu^{2/3-1/2-1/6-1/6} \mathcal{E}^{1/2} \mathcal{D}_{\gamma}^{1/2} \mathcal{D}_\beta^{1/2} = \nu^{-1/6} \mathcal{E}^{1/2} \mathcal{D}_{\gamma}^{1/2} \mathcal{D}_\beta^{1/2}, 
\end{align*}
which is consistent with Theorem \ref{thm:NLNzero}. This completes the treatment of $T_3$. 
\fi
Turning to $T_{\nq\nq;\tau 3}$ we begin with a frequency decomposition
\begin{align*}
T_{\nq\nq;\tau 3} & \lesssim \sum_{ \substack{k,k' \neq 0\\
  k\nq k'}} \left( \mathbf{1}_{\abs{k-k'} < \frac{\abs{k'}}{2}} + \mathbf{1}_{\abs{k-k'} \geq \frac{\abs{k'}}{2}} \right) \norm{\nu^{1/3} |k|^{m-1/3} \pa_y\mathfrak{J}_k\p_y \omega_{k}}_{L^2} \nu^{1/3} \abs{k}^{m-1/3\mh{-\delta}} \norm{\partial_y \phi_{k-k'}}_{L^\infty} \norm{k' \omega_{k'}}_{L^2} \\
& = T_{3;LH} + T_{3;HL}.
\end{align*}
To treat $T_{3;LH}$ first note that by Gagliardo-Nirenberg-Sobolev, $m >1/2$, and elliptic regularity we have 
\begin{align*}
\sum_{k\nq0} \norm{\na_k \phi_{k}}_{L^\infty} \lesssim \sum_{k\nq 0} \norm{\omega_k}_{L^2}^{1/2} \norm{\na_k \phi_k}_{L^2}^{1/2} \lesssim \mathcal{E}_k^{1/2}e^{-\dnt}. 
\end{align*}
Combining this, the multiplier Lemma \ref{lem:BoundI}, \ref{lem:CommIpy}, \ref{lem:H}, Lemma \ref{lem:komegaD}, Gagliardo-Nirenberg inequality and Young's convolution inequality yields
\begin{align*}
T_{3;LH} & \lesssim \sum_{ \substack{k,k' \neq 0\\
  k\nq k'}} \mathbf{1}_{\abs{k-k'} < \abs{k'}/2} \nu^{1/3}\norm{\nu^{1/3} |k|^{m-1/3} \na_k\p_y \omega_{k}}_{L^2}  \norm{\partial_y \phi_{k-k'}}_{L^\infty} \norm{ \abs{k'}^{m+2/3} \omega_{k'}}_{L^2} \\
& \lesssim \nu^{- 1/2} \mathcal{E}^{1/2} \mathcal{D}_{\alpha}^{1/2}\mathcal{D}_\gamma^{1/4}\mathcal{D}_\beta^{1/4}e^{-3\dnt}, 
\end{align*} 
which is consistent with Theorem \ref{thm:NLNzero}. 
Finally, to treat $T_{4;HL}$ we have
\begin{align*}
T_{3;HL} & \lesssim \sum_{ \substack{k,k' \neq 0\\
  k\nq k'}} \mathbf{1}_{\abs{k-k'} \geq \abs{k'}/2} \norm{\nu^{1/3} k^{m-1/3} \p_y^2 \omega_{k}}_{L^2} \nu^{1/3} \abs{k-k'}^{m-1/3-\delta} \norm{\partial_y \phi_{k-k'}}_{L^\infty} \norm{ \abs{k'}^{m+1/3} \omega_{k'}}_{L^2} \\
& \lesssim \nu^{1/3-1/6-1/2} \mathcal{D}_{\alpha}^{1/2} \mathcal{D}_\beta^{1/2} \mathcal{E}^{1/2}e^{-3\dnt},
\end{align*}
where we used Gagliardo-Nirenberg-Sobolev and elliptic regularity to deduce 
\begin{align*}
\sum_k \abs{k}^{m-1/3\mh{-\delta}}\norm{\partial_y \phi_{k}}_{L^\infty} \lesssim \sum_k \abs{k}^{-1/2\mh{-\delta}} \abs{k}^{m}\norm{\omega_k}_{L^2}^{1/2} \norm{\abs{k}^{1/3}\partial_y \phi_k}_{L^2}^{1/2} \lesssim \mathcal{E}^{1/2}_\nq e^{-\dnt}. 
\end{align*}
This is consistent with \eqref{est:altau}, Theorem \ref{thm:NLNzero}, which then completes the treatment of $T_{\nq\nq;3}$. Hence the proof of Lemma \ref{lem:altau} is completed.

\subsubsection{The $\beta$ Contributions}
In this subsection, we prove Lemma \ref{lem:beta}. We 
decompose the $\mathcal{NL}_\beta$-term in \eqref{NL} as  follows
\begin{align*}e^{-2\dnt}\mathcal{NL}_\beta & =- c_\beta \sum_{k\nq 0} \nu^{1/3} \abs{k}^{2m-4/3} \left( \Re \brak{ik \omega_k,  \p_y (\grad^\perp \phi \cdot \grad \omega)_k} + \Re \brak{ik (\grad^\perp \phi \cdot \grad \omega)_k , \p_y \omega_k} \right)\\
& = T_{\beta;1} + T_{\beta;2}. 
\end{align*}
Each term can be treated similarly due to integration by parts, hence we focus only on $T_{\beta;1}$ without loss of generality. \siming{We further note that we will only bound the absolute values of  these terms, hence the sign is not important. }
First, we separate the contributions of the zero and non-zero frequencies
\begin{align}\n
T_{\beta;1} & = -c_\beta \sum_{k \neq 0} \sum_{k'=-\infty}^\infty \left(\mathbf{1}_{k=k' , k' \neq 0} + \mathbf{1}_{k\neq k', k' = 0} + \mathbf{1}_{k \neq k', k' \neq 0} \right) \nu^{\frac{1}{3}} \abs{k}^{{2m}-\frac{4}{3}}  \Re \brak{ik \omega_k, \p_y \lf(\grad_{k-k'}^\perp \phi_{k-k'} \cdot \grad_{k'} \omega_{k'}\rg)} \\
& =: T_{0\neq} + T_{\neq 0} + T_{\neq \neq}. \label{Tbeta}
\end{align}

\noindent
\textbf{Estimate of the $T_{0\neq}$ term in \eqref{Tbeta}:} \\
Distributing the derivative and integrating by parts we have
\begin{align*}
|T_{0\neq}| 
& = \bigg|\sum_{k \neq 0}  \nu^{1/3} \abs{k}^{-4/3} \abs{k}^{2m} \Re \brak{ik \omega_k, \p_y \phi_{0} \p_y ik \omega_{k})}\siming{-}  \sum_{k \neq 0}  \nu^{1/3} \abs{k}^{-4/3} \abs{k}^{2m} \Re \brak{ik \omega_k, \p_y^2 \phi_{0} ik \omega_{k})}\bigg| \\
& = \bigg| \frac{1}{2}\sum_{k \neq 0}  \nu^{1/3}  \abs{k}^{2m+ 2/3} \Re \brak{\omega_k, \p_y^2 \phi_{0}\omega_{k})}\bigg|. 
\end{align*}
By H\"older's inequality we have 
\begin{align*}
|T_{0\neq}| \lesssim \sum_{k \neq 0}  \nu^{1/3} \norm{|k|^{1/3+m} \omega_k}_{L^2}^2 \norm{\p_y^2 \phi_{0}}_{L^\infty}.
\end{align*}
Upon applying Gagliardo-Nirenberg-Sobolev and the $\alpha$ term in the $\mathcal{E}_0$-energy, we have the estimate
\begin{align*}
\norm{\p_y^2 \phi_{0}}_{L^\infty} \lesssim \norm{\p_y \omega_{0}}_{L^2}^{1/2}\norm{\omega_{0}}_{L^2}^{1/2} \lesssim \nu^{-1/6} \mathcal{E}_0^{1/2}e^{ -\delta_\ast\nu t}.
\end{align*}
Combining the bounds above yields that,  
\begin{align*}
|T_{0\neq}| \lesssim \nu^{-1/6} \mathcal{E}_0^{1/2} \mathcal{D}_\beta e^{-2\dnt -\delta_\ast\nu t}, 
\end{align*}
which is consistent with \eqref{est:beta}, Theorem \ref{thm:NLNzero}. 

\noindent
\textbf{Estimate of the $T_{\neq 0}$ term in \eqref{Tbeta}:} \\
Integrating by parts and using H\"older's inequality yield that
\begin{align*}
 |T_{\neq 0}| & = \bigg|\sum_{k \neq 0}  \nu^{1/3}  \abs{k}^{2m-4/3}\Re \brak{ik \p_y \omega_k, (ik \phi_{k} \p_y \omega_{0})} \bigg|\\
& \lesssim  \sum_{k \neq 0}   \abs{k}^{-1/3}  \norm{\abs{k}^{ m}\p_y \omega_k}_{L^2} \norm{\abs{k}^{ m+1} \phi_{k}}_{L^\infty} (\nu^{1/3}\norm{\p_y \omega_{0}}_{L^2})\lesssim \nu^{-1/2}\mathcal{D}_{\gamma}^{1/2}\mathcal{D}_\tau^{1/2}\mathcal{E}_0^{1/2}  e^{-2\dnt-\delta_\ast\nu t}, 
\end{align*}
where in the last line we used the $\alpha$ term in the $\mathcal{E}_0$-energy \eqref{mcl_E_0} and Lemma \ref{lem:dxphiL2t}. 

\noindent
\textbf{Estimate of the $T_{\neq\neq}$ term in \eqref{Tbeta}:} \\
This is the hardest term, however we treat it in someways similar to the first two terms.
First we separate out the different components of the velocity fields 
\begin{align}\n
|T_{\neq\neq}| & \leq\bigg| \nu^{1/3} \sum_{ \substack{k,k' \neq 0\\
  k\nq k'}} \Re \brak{\abs{k}^{2m - 4/3} ik \omega_{k}, \partial_y (i(k-k') \phi_{k-k'} \partial_y\omega_{k'})} \bigg|\\
& \quad + \bigg|\nu^{1/3} \sum_{\siming{ \substack{k,k' \neq 0\\
  k\nq k'}}} \Re \brak{\abs{k}^{2m - 4/3} ik \omega_{k}, \partial_y (\partial_y \phi_{k-k'} ik'\omega_{k'})}\bigg| =: T_x +T_y.\label{Tb_nqnq}
\end{align} 
The $T_x$ term is consistent using the $\alpha$ term in the energy along with the dissipation, as long as $m > 1/2$.
Indeed, we have for $m  > 1/2$ using Lemma \ref{lem:dxphiL2t}, 
\begin{align*}
T_x & \lesssim \sum_{ \substack{k,k' \neq 0\\
  k\nq k'}} \norm{\nu^{1/3} |k|^{-1/3} \partial_y |k|^m \omega_k}_{L^2}(\abs{k-k'}^m + \abs{k'}^m) \norm{\abs{k-k'}\phi_{k-k'}}_{L^\infty} \norm{\partial_y \omega_{k'}}_{L^2} \\
& \lesssim \frac{\mathcal{E}^{1/2}}{\sqrt{\nu}}\mathcal{D}_\tau^{1/2} \mathcal{D}_\gamma^{1/2}e^{-3\dnt}, 
\end{align*}
which is consistent with \eqref{est:beta} and Theorem \ref{thm:NLNzero}. 
Next, consider the more formidable $T_y$ term.
We first distribute the derivative to treat each contribution separately
\begin{align}\n
T_y  & \leq\bigg| \nu^{1/3} \sum_{ \substack{k,k' \neq 0\\
  k\nq k'}} \Re \brak{\abs{k}^{2m - 4/3} ik \omega_{k}, \partial_y^2 \phi_{k-k'} ik'\omega_{k'}} \bigg|\\
& \quad + \bigg|\nu^{1/3} \sum_{ \substack{k,k' \neq 0\\
  k\nq k'}}\Re \brak{\abs{k}^{2m - 4/3} ik \omega_{k}, \partial_y \phi_{k-k'} ik'\partial_y\omega_{k'}}\bigg| =: T_{y,1} + T_{y,2}.\label{Tbeta_y} 
\end{align}
Consider the first term.
We split it into $HL$ and $LH$ contributions based on the $x$-frequency
\begin{align}
\n T_{y,1} & = \bigg|\nu^{1/3} \sum_{ \substack{k,k' \neq 0\\
  k\nq k'}} \left(\mathbf{1}_{\abs{k-k'} > \abs{k'}/2} + \mathbf{1}_{\abs{k'-k} < \abs{k'}/2} \right)  \Re \brak{\abs{k}^{2m - 4/3} ik  \omega_k, \partial_y^2 \phi_{k-k'} ik'\omega_{k'}}\bigg| \\
& = :|T_{HL} + T_{LH}|.\label{Tbeta_y1}  
\end{align}
Consider first the $T_{LH}$ term in \eqref{Tbeta_y1}. Using that $\abs{k} \leq\frac{3 }{2}\abs{k'}$ on the support of the summand and $m > 2/3$, we have
\begin{align*}
|T_{LH}| & \lesssim \sum_{ \substack{k,k' \neq 0\\
  k\nq k'}}  \mathbf{1}_{\abs{k-k'} \leq \frac{1}{2}\abs{k'}} \nu^{1/3} \norm{\abs{k}^{m - 2/3} k \omega_k}_{L^2} \norm{\abs{k'}^{m - 2/3} k' \omega_{k'}}_{L^2} \norm{\partial_y^2 \phi_{k-k'}}_{L^\infty}.
\end{align*}
Then we observe that by \eqref{ineq:d2phiAlpha} and Young's convolution inequality, 
\begin{align*}
|T_{LH}| & \lesssim \bigg(\sum_{k \neq 0}  \nu^{1/3} \norm{\abs{k}^{m +1/3}   \omega_k}_{L^2}^2\bigg)^{1/2}\bigg(\nu^{1/3}\sum_{k'\nq 0} \norm{\abs{k'}^{m + 1/3} \omega_{k'}}_{L^2}^2\bigg)^{1/2} \bigg(\sum_{k\nq0}\norm{\partial_y^2 \phi_{k}}_{L^\infty}\bigg)\\ \lesssim& {\nu^{-1/ 6}} \mathcal{E}^{1/2} \mathcal{D}_\beta e^{-3\dnt},
\end{align*}
which is consistent with \eqref{est:beta} and Theorem \ref{thm:NLNzero}. 
Next consider the $T_{HL}$ term in \eqref{Tbeta_y1}, which we instead estimate as follows, using that $m > 2/3$ and $\abs{k'} + \abs{k} \lesssim \abs{k-k'}$ on the support of the summand, 
\begin{align*} 
|T_{HL} |& \lesssim \nu^{1/3} \sum_{ \substack{k,k' \neq 0\\
  k\nq k'}}\mathbf{1}_{\frac{1}{2}\abs{k'} \leq \abs{k-k'}} \norm{\abs{k}^{m - 2/3} k \omega_k}_{L^2} \norm{k' \omega_{k'}}_{L^\infty} \norm{\abs{k-k'}^{m - 2/3} \partial_y^2 \phi_{k-k'}}_{L^2} \\
  & \lesssim \nu^{1/3} \sum_{ \substack{k,k' \neq 0\\
  k\nq k'}}\mathbf{1}_{\frac{1}{2}\abs{k'} \leq\abs{k-k'}} \norm{\abs{k}^{m + 1/3}  \omega_k}_{L^2} \norm{\omega_{k'}}_{L^\infty}  \norm{\abs{k-k'}^{m + 1/3} \partial_y^2 \phi_{k-k'}}_{L^2}. 
\end{align*}
From here we use essentially the same argument as we did for the $LH$ term.
First we note that by elliptic regularity, 
\begin{align*}
  \norm{\abs{k-k'}^{m + 1/3} \partial_y^2 \phi_{k-k'}}_{L^2} & \lesssim \norm{\abs{k-k'}^{m + 1/3} \omega_{k-k'}}_{L^2},
\end{align*}
and that by Gagliardo-Nirenberg-Sobolev (and $m >2/3$
), we have 
\begin{align*}
 &\sum_{k\neq 0} \norm{\omega_{k}}_{L^\infty} \lesssim \sum_{k\neq0} \norm{\omega_{k}}_{L^2}^{1/2} \norm{\partial_y \omega_{k}}_{L^2}^{1/2}  \lesssim \nu^{-1/6}\sum_{k\neq0} \abs{k}^{-m+1/6} \norm{|k|^m\omega_{k}}_{L^2}^{1/2} \norm{\nu^{1/3}\abs{k}^{m-1/3}\partial_y \omega_{k}}_{L^2}^{1/2}\\
&\lesssim \nu^{-1/6}\bigg(\sum_{k\nq 0 }|k|^{-2m+1/3}\bigg)^{1/2}\bigg(\sum_{k\neq0}\norm{|k|^m\omega_{k}}_{L^2}^2\bigg)^{1/4} \bigg(\sum_{k\nq 0}\norm{\nu^{1/3}\abs{k}^{m-1/3}\partial_y \omega_{k}}_{L^2}^2\bigg)^{1/4} \lesssim \nu^{-1/6} \mathcal{E}^{1/2}e^{-\dnt}.  
\end{align*}
Therefore, we similarly have
\begin{align*}
|T_{HL} |\lesssim \nu^{-1/6} \mathcal{E}^{1/2} \mathcal{D}_\beta e^{-3\dnt},
\end{align*}
which is consistent with \eqref{est:beta} and Theorem \ref{thm:NLNzero}. This completes the treatment of the $T_{y,1}$ term. 

Next, we consider the $T_{y,2}$ term in \eqref{Tbeta_y}, for which we introduce a commutator (writing completely on the physical-side for a moment): 
\begin{align}\n
|T_{y,2}| & = \bigg|\nu^{1/3} \brak{\abs{\partial_x}^{m - 2/3} \partial_x \omega_{\neq}, \partial_y \phi_{\neq} \partial_y \abs{\partial_x}^{m - 2/3} \partial_x \omega_{\neq}} \\
& \qquad + \nu^{1/3} \brak{\abs{\partial_x}^{m - 2/3} \partial_x \omega_{\neq}, [\partial_y \phi_{\neq},\abs{\partial_x}^{m - 2/3}] \partial_y \partial_x \omega_{\neq}} \bigg|
  =: |T_{m} + T_c|.\label{Tmc} 
\end{align}
Here, ``$m$'' is ``main'' and ``$c$'' is ``commutator''. 
For the first term, we integrate by parts and  use H\"older's inequality after expanding in Fourier to obtain, 
\begin{align*}
|T_m |& = \bigg|\frac{1}{2}\nu^{1/3} \brak{\abs{\partial_x}^{m - 2/3} \partial_x \omega_{\neq}, \partial_y^2 \phi_{\neq} \abs{\partial_x}^{m - 2/3} \partial_x \omega_{\neq}}\bigg| \\ 
&\lesssim \nu^{1/3} \sum_{\substack{k,k'\nq0;\\ k\nq k'}}\norm{ \abs{k}^{m+1/3} \omega_k}_{L^2}\norm{ \abs{k'}^{m+1/3} \omega_{k'}}_{L^2} \norm{\partial_y^2 \phi_{k-k'}}_{L^\infty}. 
\end{align*} 
Using \eqref{ineq:d2phiAlpha} again, we therefore have 
\begin{align*}
|T_m| \lesssim \nu^{-1/6} \mathcal{E}^{1/2} \mathcal{D}_\beta e^{-3\dnt}, 
\end{align*}
which is consistent with \eqref{est:beta} and Theorem \ref{thm:NLNzero}. 

Finally we turn to the term with the commutator in \eqref{Tmc}, $T_c$, for which we expand on the Fourier-side and use another frequency decomposition
\begin{align*}
|T_c| & \lesssim  \nu^{1/3} \sum_{ \substack{k,k' \neq 0\\
  k\nq k'}} (\mathbf{1}_{\abs{k-k'} < \abs{k'}/2} + \mathbf{1}_{\abs{k-k'} \geq \abs{k'}/2}) \int_{-1}^1 \abs{k}^{m - 2/3} \abs{k\omega_k} \abs{\abs{k}^{m - 2/3} - \abs{k'}^{m-2/3}} \abs{\partial_y \phi_{k-k'}  k'\partial_y\omega_{k'}} \dee y \\
  & =: T_{c;LH} + T_{c;HL}. 
\end{align*}
For the $T_{c;LH}$ term we use the commutator estimate (which uses $m > 2/3$) 
\begin{align*}
\abs{\abs{k}^{m - 2/3} - \abs{k'}^{m-2/3}} \lesssim \abs{k'}^{m-2/3-1} \abs{k-k'},\quad k'\nq 0, 
\end{align*}
which gives (using also $\abs{k-k'} \lesssim \abs{k'}$ on the support of the summand)
\begin{align*}
  T_{c;LH} & \lesssim \nu^{-1/6}\sum_{ \substack{k,k' \neq 0\\
  k\nq k'}}\mathbf{1}_{\abs{k-k'} < \abs{k'}/2} \norm{\nu^{1/6}\abs{k}^{m +1/3} \omega_k}_{L^2} \norm{\abs{k-k'}^{2/3}\partial_y \phi_{k-k'}}_{L^\infty}\norm{ \nu^{1/3} \abs{k'}^{m-1/3}\partial_y\omega_{k'}}_{L^2}. 
\end{align*}
Then, we observe the following estimate, which follows from the Gagliardo-Nirenberg-Sobolev inequality, \siming{
\begin{align*}
\sum_{k\nq0} \norm{\abs{k}^{2/3}\partial_y \phi_{k}}_{L^\infty} & \lesssim \sum_{k\nq0} \norm{\abs{k}^{1/3}\partial_y^2 \phi_{k}}_{L^2}^{1/2} \norm{|k|\partial_y \phi_k}_{L^2}^{1/2} +\sum_{k\nq0} \norm{\abs{k}^{2/3}\partial_y  \phi_{k}}_{L^2} \\
  &\lesssim \sum_{k\nq0} \abs{k}^{-m-1/6}\norm{\abs{k}^{m+1/3} \omega_k}_{L^2} 
  \lesssim \nu^{-1/6}\mathcal{D}^{1/2}_\beta e^{-\dnt}. 
\end{align*}}
Therefore, we obtain
\begin{align*}
T_{c;LH} \lesssim \nu^{\myr{-1/3}} \mathcal{E}^{1/2}_\al \mathcal{D}_\beta e^{-3\dnt},%
\end{align*}
which is consistent with \eqref{est:beta}, Theorem \ref{thm:NLNzero}. 

Finally, we consider $T_{c,HL}$, for which the commutator is not relevant. 
Using that $0 < m-1/3 < 1$ and $\abs{k'} + \abs{k} \lesssim \abs{k-k'}$ on the support of the summand, we have 
\begin{align*}
T_{c,HL} & \lesssim \nu^{1/3} \sum_{ \substack{k,k' \neq 0\\
  k\nq k'}} \mathbf{1}_{\abs{k-k'} \geq \abs{k'}/2} \abs{k}^{m - 2/3} \abs{k\omega_k} \abs{\abs{k}^{m - 2/3} - \abs{k'}^{m-2/3}} \abs{\partial_y \phi_{k-k'}  k'\partial_y\omega_{k'}} \\
& \lesssim \sum_{ \substack{k,k' \neq 0\\
  k\nq k'}} \nu^{1/3} \norm{|k|^{m + 1/3} \omega_k}_{L^2} \norm{\abs{k-k'}^{m+1/3} \partial_y \phi_{k-k'}}_{L^\infty} \norm{\partial_y \omega_{k'}}_{L^2}.
\end{align*}
Next, note $m > 1/2$ and Gagliardo-Nirenberg-Sobolev imply the following estimates 
\begin{align*}
\sum_{k\nq 0} \norm{\partial_y \omega_{k}}_{L^2} & \lesssim \bigg(\sum_{k\nq0} \norm{\abs{k}^m \partial_y \omega_k}_{L^2}^2\bigg)^{1/2}\approx \nu^{-1/2}\mathcal D_{\gamma}^{1/2}e^{-\dnt}, \\
\norm{\abs{k}^{m+1/3} \partial_y \phi_{k}}_{L^\infty} &  \lesssim   \norm{\abs{k}^{m}\partial_y^2 \phi_{k}}_{L^2}^{1/2} \norm{|k|^{m+2/3}\partial_y \phi_k}_{L^2}^{1/2} +  \norm{\abs{k}^{m+1/3}\partial_y  \phi_{k}}_{L^2} \lesssim  \norm{\abs{k}^m \omega_k}_{L^2}, 
\end{align*} 
and therefore it follows that  
\begin{align*}
T_{c,HL} & \lesssim \nu^{-1/3}\mathcal{E}^{1/2}\mathcal{D}_\beta^{1/2} \mathcal{D}_\gamma^{1/2}e^{-3\dnt}, 
\end{align*}
which is consistent with \eqref{est:beta} and  Theorem \ref{thm:NLNzero}.
This completes the treatment of the $\mathcal{NL}_\beta$ term. 
\siming{
Finally, we provide the proof of the main proposition  \ref{prop:MainBoot}, which directly implies Theorem \ref{thm:main}:
\begin{proof}[Proof of Proposition  \ref{prop:MainBoot}]
We combine the decomposition \eqref{ddtE0}, \eqref{ddtEnq}, Lemma \ref{lem:NLzero} and Theorem \ref{thm:NLNzero} to obtain that 
\begin{align*}
\frac{d}{dt}\mathcal{E}\leq -\frac{1}{2}\mathcal{D}_0-6\delta_\ast \nu \mathcal{E}_0 -4\delta_\ast  \mathcal{D}_\nq-6\delta_\ast \nu^{1/3}\sum_{k \neq 0}  e^{2\delta_\ast \nu^{1/3} t} \abs{k}^{2m + 2/3} E_k[\omega_k]+\frac{C\mathcal{E}^{1/2}}{\sqrt{\nu}}\mathcal{D}.
\end{align*}
Recalling the definition of $\mathcal{D}=\mathcal{D}_0+\mathcal{D}_{\nq}+\mathcal{D}_E$ \eqref{mcl_D}, we observe that for $\delta_\ast \leq \frac{1}{16}$, the estimate above implies \eqref{ineq:MainEst}. Integration in time yields the main result. 
\end{proof}
}

\vspace{4 mm}

\noindent \textbf{Acknowledgements:} JB was supported by NSF Award DMS-2108633. JB would also like to thank Ryan Arbon for helpful discussions. The work of  SH is supported in part by NSF grants DMS 2006660, DMS 2304392, DMS 2006372. The work of SI is partially supported by NSF DMS-2306528 and a UC Davis Society of Hellman Fellowship award. The work of FW is supported by the National Natural Science Foundation of China (No. 12101396, 12161141004, and 12331008).

\vspace{4 mm}

\noindent \textbf{Data Availability:} Data sharing not applicable to this article as no datasets were generated or analyzed for this work.


%
%
%
%
%
%


\addcontentsline{toc}{section}{References}
\bibliographystyle{abbrv}
\bibliography{bibliography}

\end{document}